\title{The Third Proof of Lov\'asz's Cathedral Theorem}
\author{Nanao Kita}
\institute{Keio University, Yokohama, Japan\\
           \email{kita@a2.keio.jp}}
\date{\today}
\spnewtheorem{fact}{Fact}{\bfseries}{\rmfamily}
\spnewtheorem{cclaim}{Claim}{\bfseries}{\rmfamily}
\spnewtheorem*{acknowledgement}{Acknowledgement}{\bfseries}{\rmfamily}
\spnewtheorem*{proofof}{Proof of}{\itfamily}{\rmfamily}
\spnewtheorem*{notes}{Note}{\bfseries}{\rmfamily}
\newcommand{\matchablesp}{factorizable~}
\newcommand{\zero}{balanced~}
\newcommand{\exposed}{exposed}
\newcommand{\Vg}{V(G)}
\newcommand{\yield}{\triangleleft}
\newcommand{\gpart}[1]{\mathcal{P}(#1)}
\newcommand{\pargpart}[2]{\mathcal{P}_{#1}(#2)}
\newcommand{\up}[1]{\mathcal{U}(#1)}
\newcommand{\parup}[2]{\mathcal{U}_{#1}(#2)}
\newcommand{\parupstar}[2]{\mathcal{U}^{*}_{#1}(#2)}
\newcommand{\vup}[1]{U(#1)}
\newcommand{\vupstar}[1]{U^*(#1)}
\newcommand{\vparup}[2]{U_{#1}(#2)}
\newcommand{\vparupstar}[2]{U^*_{#1}(#2)}
\newcommand{\what}{\hat}
\renewcommand{\Gamma}{N}
\newcommand{\astar}[2]{X}
\renewcommand{\Delta}{\triangle}
\newcommand{\gsim}[1]{\sim_{#1}}
\begin{document}
\maketitle

\pagestyle{plain}

\begin{abstract}
A graph $G$ with a perfect matching is called saturated 
if $G+e$ has more perfect matchings than $G$ 
for any edge $e$ that is not  in $G$. 
Lov\'asz gave a 
 characterization of the saturated graphs called 
the cathedral theorem,  with some applications 
to the enumeration problem of  perfect matchings, and 
later Szigeti gave another proof. 
In this paper, 
we give a new proof 
with our preceding works  
which revealed canonical structures of general graphs with 
perfect matchings. 
Here, the cathedral theorem 
is derived in quite a natural way, providing more refined or generalized properties. 
Moreover, 
the new proof shows that 
it can be proved without using the Gallai-Edmonds structure theorem. 
\end{abstract}

\section{Introduction}\label{sec:intro}

A graph with a perfect matching is called {\em factorizable}. 
A factorizable graph $G$,  with the edge set $E(G)$,  is called 
{\em saturated} 
if $G+e$ has more perfect matchings than $G$ 
for any edge $e\not\in E(G)$. 
There is a constructive characterization of 
the saturated graphs known as the {\em cathedral theorem}%
~\cite{lovasz1972b, lp1986, szigeti1993, szigeti2001}. 
Counting the number of perfect matchings is one of the most 
fundamental enumeration problems, 
which has applications to physical science, 
and the cathedral theorem is known to be useful for such a counting problem. 
For a  given factorizable graph, 
we can obtain a saturated graph which 
possesses the same family of perfect matchings  
by adding appropriate edges repeatedly. 
Many matching-theoretic structural properties are 
preserved by this procedure. 
Therefore, we can find several 
properties on perfect matchings 
of factorizable graphs using the cathedral theorem, 
such as relationships between 
the number of perfect matchings of a given factorizable graph 
and its structural properties 
such as its connectivity~\cite{lp1986}
 or the numbers of vertices and edges~\cite{hswy2012}.

%

The cathedral theorem was originally 
given by Lov\'asz~\cite{lovasz1972b} (see also \cite{lp1986}), 
and later another proof was given by Szigeti~\cite{szigeti1993, szigeti2001}. 
%
Lov\'asz's proof is  
based on the Gallai-Edmonds structure theorem~\cite{lp1986}, 
which is one of the most powerful theorem in matching theory. 
Any graph $G$ has a partition of its vertices into three parts, 
some of which might be empty, 
so-called $D(G), A(G)$, and  $C(G)$~\cite{lp1986}, 
which we call in this paper the {\em Gallai-Edmonds partition}. 
The property that $A(G)$ forms 
a barrier with certain special properties 
is called the {\em Gallai-Edmonds structure theorem}~\cite{lp1986}. 
The Gallai-Edmonds structure theorem 
tells non-trivial structures only 
for non-factorizable graphs, 
because it treats factorizable graphs as irreducible. 
Thus, Lov\'asz proved the cathedral theorem 
by applying  the Gallai-Edmonds structure theorem 
to non-factorizable subgraphs of saturated graphs. 

Szigeti's proof  is based on 
some results on the {\em optimal ear-decompositions} 
by Frank~\cite{frank1993},  
which is also based on the Gallai-Edmonds structure theorem 
and is not a ``matching-theory-closed'' notion,
while the cathedral theorem itself is closed. 

The cathedral theorem is outlined as follows: 

\begin{itemize}
\item 
There is a constructive characterization of the saturated graphs with 
an operation called the {\em cathedral construction}. 
\item 
A set of edges of a saturated graph is a perfect matching 
if and only if it is a disjoint union of 
 perfect matchings of each ``component part'' of 
the cathedral construction that creates the saturated graph. 
\item 
For each saturated graph, 
the way to construct it by the cathedral construction 
uniquely exists. 
\item 
There is a relationship between 
the cathedral construction and the Gallai-Edmonds partition. 
\end{itemize}

In our preceding works~\cite{kita2012a, kita2012b, kita2012f, kita2013a}, 
we introduced canonical structure theorems which 
tells non-trivial structures for general factorizable graphs. 
Based on these results, 
we provide yet another proof of the cathedral theorem in this paper. 
The features of the new proof are the following:  
First, it is quite natural and provides new facts as by-products. 
The notion of ``saturated'' is defined by edge-maximality. 
By considering this edge-maximality over the canonical structures of factorizable graphs, 
we obtain the new proof in quite a natural way. 
%
%
%
%
Therefore, our proof reveals 
the essential structure that underlies the cathedral theorem, 
and provides a bit more refined or generalized statements 
from the point of view of 
the canonical structure of general factorizable graphs. 

Second, 
it shows that 
the cathedral theorem can be proved without  
 the Gallai-Edmonds structure theorem 
nor the notion of barriers, 
since 
our previous works, as well as the proofs presented in this paper, 
are obtained 
without them. 
Even the portion of the statements of the cathedral theorem 
stating its relationship to the Gallai-Edmonds partition 
can be obtained without them.

In Section~\ref{sec:pre}, 
we give notations, definitions, 
and some preliminary facts on matchings used in this paper. 
In Section~\ref{sec:outline} we present an outline of
 how we give the new proof of the cathedral theorem. 
Section~\ref{sec:canonical} 
is to present our previous works~\cite{kita2012a, kita2012b}: 
the canonical structure theorems for general factorizable graphs. 
In Section~\ref{sec:alternating}, 
we further consider the theorems in Section~\ref{sec:canonical} 
and show one of the new theorems, 
which later turns out to provide a  generalized version of the part of the cathedral theorem regarding the Gallai-Edmonds partition. 
%
%
In Section~\ref{sec:proof}, 
we complete the new proof of the cathedral theorem. 
Finally, in Section~\ref{sec:conclusion}, 
we conclude this paper. 
Some basic properties on matchings, i.e., Properties \ref{prop:path2root}, \ref{prop:allowed}, \ref{prop:deletable},  \ref{prop:separating2saturated}, 
and \ref{prop:complement} are presented in Appendix.

\section{Preliminaries}\label{sec:pre}

\subsection{Notations and Definitions}

Here, we list some standard notations and definitions, 
most of which are  given by Schrijver~\cite{schrijver2003}. 
For general accounts on matchings,  see also Lov\'asz and Plummer~\cite{lp1986}. 

We define the \textit{symmetric difference} of two sets $A$ and $B$ 
as $(A\setminus B) \cup (B\setminus A)$ and denote it by $A\Delta B$.  
For a graph $G$, we denote the vertex set of $G$ by $V(G)$ 
and the edge set by $E(G)$ and write $G = (V(G), E(G))$.  
Hereafter for a while let $G$ be a graph and let $X\subseteq V(G)$. 
The subgraph of $G$ induced by $X$ is denoted by $G[X]$, 
and $G-X$ means $G[\Vg\setminus X]$. 
We define the \textit{contraction} of $G$ by $X$ as 
the graph arising from contracting each edge of $E(G[X])$ 
into one vertex, and denote it by
$G/X$.  
We denote the subgraph of  $G$ determined by $F\subseteq E(G)$ by $G . F$. 

Let $G$ be a subgraph of a graph $\what{G}$, 
and let $e = xy\in E(\what{G})$. 
If $G$ does not have an edge joining $x$ and $y$, 
then we call $xy$ a {\em complement edge} of $G$. 
The graph $G+e$ denotes the graph $(V(G)\cup \{x,y\}, E(G)\cup\{e\})$, 
and $G-e$ the graph $(V(G), E(G)\setminus \{e\})$. 
For $F = \{e_1,\ldots, e_k\} \subseteq E(\what{G})$, 
we define $G+F := G+e_1+\cdots+e_k$ and $G-F: = G-e_1-\cdots-e_k$. 

We define the set of {\em neighbors} of $X$ as 
the vertices in $V(G)\setminus X$ that 
are joined to some vertex of $X$, 
and denote it by $\Gamma_{G}(X)$. 
Given $Y, Z\subseteq V(G)$, 
the set $E_{G}[Y, Z]$ denotes the edges joining $Y$ and $Z$, 
and $\delta_{G}(Y)$ denotes $E_{G}[Y, V(G)\setminus Y]$.


%
A set of edges is called a {\em matching} 
if no two of them share end vertices.  
A matching of cardinality $|V(G)|/2$ (resp. $|V(G)|/2 - 1$) is called 
a {\em  perfect matching} (resp. a {\em  near-perfect matching}). 
Hereafter for a while let $M$ be a matching of a graph $G$. 
We say $M$ \textit{ exposes } a vertex $v\in V(G)$
if $\delta_{G}(v)\cap M = \emptyset $. 

In this paper, we treat paths and circuits as graphs. 
For a path or circuit $Q$ of $G$, 
$Q$ is $M$-{\em  alternating}
if $E(Q)\setminus M$ is a matching of $Q$;  
in other words, if edges of $M$ and $E(Q) \setminus M$ appear alternately in $Q$.
Let $P$ be an $M$-alternating path of $G$  with end vertices $u$ and $v$.
If $P$ has an even number of edges 
and $M\cap E(P)$ is a near-perfect matching of $P$ exposing only $v$, 
we call it an $M$-{\em  \zero path} from $u$ to $v$.
We regard a trivial path, that is, a path composed of 
one vertex and no edges  as an $M$-\zero path.
If $P$ has an odd number of edges and 
$M\cap E(P)$ (resp. $E(P)\setminus M$) is a perfect matching of $P$,
we call it $M$-{\em  saturated} (resp. $M$-{\em  \exposed}). 

%

%
A path $P$ of $G$  is an {\em  ear relative to} $X$ 
if both end vertices of $P$ are in $X$ while internal vertices are not. 
Also, a circuit $C$ is an ear relative to $X$ if 
exactly one vertex of $C$ is in $X$.
For simplicity, we call the vertices of $V(P)\cap X$ 
{\em end vertices} of $P$, even if $P$ is a circuit.
For an ear $P$ of $G$ relative to $X$,
we call it an $M$-{\em  ear}
if  $P-X$ is an $M$-saturated path. 

\if0
We say $X\subseteq V(G)$ is \textit{separating} 
if any $H\in\mathcal{G}(G)$ satisfies 
$V(H)\subseteq X$ or $V(H)\cap X$. 
\fi

%
%
%
%
%

A graph is called {\em factorizable} 
if it has a perfect matching. 
A graph is called {\em  factor-critical} if 
a deletion of an arbitrary vertex results in  a factorizable graph. 
For convenience, we regard a graph with only one vertex 
as factor-critical. 

We sometimes regard a graph as the set of its vertices. 
For example, given a subgraph $H$ of $G$, 
we denote $\Gamma_{G}(V(H))$ by $\Gamma_{G}(H)$. 
For simplicity, 
regarding the operations of the contraction  
or taking the union of graphs, 
we identify vertices, edges, and subgraphs 
of the newly created graph with 
those of old graphs that naturally correspond to them.

Let $G$ be a factorizable graph. 
An edge $e\in E(G)$   is called \textit{allowed} if 
there is a perfect matching of $G$ containing $e$, and 
each connected component of the subgraph of $G$
determined by the set of all the allowed edges
is called an \textit{elementary component} of $G$. 
A factorizable graph which has exactly one elementary component 
is called \textit{elementary}. 
For an elementary component $H$ of $G$, 
we call $G[V(H)]$ a {\em factor-connected component} of $G$, 
and denote the set of all factor-connected components of $G$ by $\mathcal{G}(G)$. 
Hence, any factor-connected component is elementary and 
a factorizable graph is composed of 
its factor-connected components and additional edges 
joining distinct factor-connected components.

\subsection{The Gallai-Edmonds Partition}\label{sec:gallai-edmonds}
Given a graph $G$, 
we define $D(G)$ as 
\begin{quotation}
$D(G):= \{ v\in V(G) : \mbox{there is a maximum matching that exposes } v\}$. 
\end{quotation}
We also define  
$A(G)$ as $\Gamma_{G}(D(G))$  
and $C(G)$ as 
$V(G)\setminus (D(G)\cup A(G))$. 
We call in this paper this partition of 
$V(G)=D(G)\dot{\cup} A(G) \dot{\cup} C(G) $ into three parts 
the {\em Gallai-Edmonds partition}. 
It is known as the {\em Gallai-Edmonds structure theorem} 
that $A(G)$ forms a  barrier with special properties, 
which is one of the most powerful theorem in matching theory~\cite{lp1986}. 
%

In this section, 
we present 
a proposition which shows another property  of the Gallai-Edmonds partition 
that is different from 
the Gallai-Edmonds structure theorem. 
This proposition is a well-known fact that 
connects the Gallai-Edmonds structure theorem 
and Edmonds' maximum matching algorithm,  
 and we can find it in \cite{kv2008, cc2005}. 
However, this proposition can be proved in an elementary way 
without using them, nor the notion of barriers. 
In the following 
we present it with a proof to confirm it.  
Note that Proposition~\ref{prop:gallai-edmonds} 
itself is NOT the Gallai-Edmonds structure theorem.

\if0
This is a known fact that can be seen by Edmonds' maximum matching 
algorithm and the Gallai-Edmonds structure theorem. 
However, we here present it with an elementary proof 
to claim that it can be obtained independently by 
the Gallai-Edmonds structure theorem 
or Edmonds' algorithm. 
\fi
\begin{proposition}\label{prop:gallai-edmonds}
Let $G$ be a graph, $M$ be a maximum matching of $G$,  
and $S$ be the set of vertices that are exposed by $M$. 
Then, the following hold:  
\renewcommand{\labelenumi}{\theenumi}
\renewcommand{\labelenumi}{{\rm \theenumi}}
\renewcommand{\theenumi}{(\roman{enumi})}
\begin{enumerate}
\item \label{item:d}
A vertex $u$ is in $D(G)$  if and only if there exists $v\in S$  such that 
there is an $M$-balanced path from $u$ to $v$. 
\item \label{item:a}
A vertex $u$ is in $A(G)$ if and only if 
there is no $M$-balanced path from $u$ to any vertex of $S$, while
there exists $v\in S$ such that 
there is an $M$-exposed path between $u$ and $v$. 
\item \label{item:c}
A vertex $u$ is in $C(G)$ if and only if for any $v\in S$
 there is neither an $M$-balanced path from 
any $u$ to $v$ nor an $M$-exposed path between $u$ and $v$. 
\if0
$u\in C(G)$ if and only if there is no $M$-alternating path 
whose end vertices are $u$ and any vertex in $S$.
\fi
\end{enumerate}
\end{proposition}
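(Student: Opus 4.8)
The plan is to prove the three statements together by first establishing the "only if" directions via simple alternating-path arguments from a fixed maximum matching $M$, and then noting that the three conditions on the right-hand sides are mutually exclusive and exhaustive, so that the "if" directions follow automatically. Let me set up the key observations. Since $M$ is maximum, there is no $M$-augmenting path; equivalently, there is no $M$-exposed path between two distinct vertices of $S$, and no vertex of $S$ is the end of an $M$-saturated path (the latter would force a smaller matching elsewhere). This is the engine behind everything.

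First I would handle \ref{item:d}, the characterization of $D(G)$. For the "only if" direction, suppose $u\in D(G)$, so some maximum matching $M'$ exposes $u$. Consider $M\Delta M'$: since both are maximum, every component of this symmetric difference is an even alternating path or an even alternating circuit. The vertex $u$ is exposed by $M'$ but (if $u\notin S$) covered by $M$, so $u$ is the end of an even $M\Delta M'$-path whose other end $v$ is exposed by $M$, i.e.\ $v\in S$; tracing this path shows it is an $M$-balanced path from $u$ to $v$ (and if $u\in S$ already, the trivial path works with Property~\ref{prop:path2root} if needed). Conversely, if there is an $M$-balanced path $P$ from $u$ to some $v\in S$, then $M\Delta E(P)$ is a maximum matching exposing $u$ (it flips $v$'s exposure to $u$'s while keeping the cardinality), so $u\in D(G)$.

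Next, for \ref{item:a}, the "only if" direction: if $u\in A(G)$ then $u\in\Gamma_G(D(G))$, so $u$ is adjacent to some $w\in D(G)$ with $w\ne u$; by \ref{item:d} there is an $M$-balanced path $P$ from $w$ to some $v\in S$. One shows $u\notin V(P)$ — otherwise a subpath would give an $M$-balanced path from $u$ to $v$, forcing $u\in D(G)$, but $D(G)$ and $A(G)$ are disjoint — hence appending the edge $uw$ to $P$ yields an $M$-exposed path between $u$ and $v$ (the appended edge is non-matching because $P-S$ is $M$-saturated at $w$). The fact that no $M$-balanced path runs from $u$ to $S$ is exactly $u\notin D(G)$. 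For \ref{item:c}, the "only if" direction is immediate: $u\in C(G)$ means $u\notin D(G)\cup A(G)$, and the path conditions characterizing $D(G)$ and $A(G)$ in \ref{item:d} and the first part of \ref{item:a} are precisely what must fail. Finally, the right-hand conditions of \ref{item:d}, \ref{item:a}, \ref{item:c} partition all possibilities for a vertex $u$ (either an $M$-balanced path to $S$ exists, or not but an $M$-exposed path to $S$ exists, or neither), and $V(G)=D(G)\dot\cup A(G)\dot\cup C(G)$, so once the three "only if" directions are in place the three "if" directions follow by elimination.

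The main obstacle I expect is the bookkeeping in \ref{item:a}: one must verify carefully that the vertex $u$ genuinely avoids the interior of the balanced path $P$ and that the parity of the augmented path comes out right so that it is $M$-exposed rather than $M$-saturated — here the maximality of $M$ (no $M$-saturated path ending in $S$) is what rules out the bad case. The rest is routine symmetric-difference analysis, for which Properties~\ref{prop:path2root} and \ref{prop:allowed} from the appendix should suffice.
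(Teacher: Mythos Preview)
Your overall strategy matches the paper's, and your treatment of \ref{item:d} is essentially identical to it. The partition argument at the end is sound in principle. However, your argument for the ``only if'' direction of \ref{item:a} has a genuine gap: the claim that $u\notin V(P)$ is unjustified and in general false. When $u$ does lie on the $M$-balanced path $P$ from $w$ to $v\in S$, the subpath of $P$ from $u$ to $v$ is \emph{not} automatically $M$-balanced---its type depends on the parity of $u$'s position along $P$. Since $u\notin D(G)$, part \ref{item:d} rules out the $M$-balanced possibility, so that subpath is actually $M$-exposed and is already the path you want. The paper handles exactly this by an explicit case split on whether $u$ lies on the path. Your self-identified ``main obstacle'' misdiagnoses the difficulty: neither the parity of $P+uw$ nor the maximality of $M$ is at stake; the point is simply that $u$ may lie on $P$, and that case must be treated, not excluded.

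There is also a circularity in your sketch of \ref{item:c}. From the ``only if'' direction of \ref{item:a} alone you cannot deduce that $u\notin A(G)$ forces the absence of an $M$-exposed path to $S$, so your elimination argument is incomplete as stated. One extra step closes it: if an $M$-exposed path between $u$ and some $v\in S$ exists, the neighbour of $u$ on that path has an $M$-balanced path to $v$ (just delete $u$), hence lies in $D(G)$ by \ref{item:d}, which puts $u$ in $D(G)\cup A(G)$. This is exactly what the paper proves as the ``if'' direction of \ref{item:a}; once you have it, \ref{item:c} really is immediate.
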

\begin{proof}
For the necessity part of \ref{item:d}, 
let $P$ be the $M$-balanced path from $u$ to $v$. 
Then, $M\Delta E(P)$ is a maximum matching of $G$ 
that exposes $u$. Thus, $u\in D(G)$. 

Now we move on to the sufficiency part of \ref{item:d}. 
If $u\in D(G)\cap S$, 
the trivial $M$-balanced path $(\{u\}, \emptyset )$ 
satisfies the property. 
Otherwise, that is, if $u \in D(G)\setminus S$, 
by the definition of $D(G)$ 
there is a maximum matching $M'$ of $G$ that exposes $u$. 
Then, $G. M\Delta M'$ has a connected component 
which is an $M$-balanced path 
from $u$ to some vertex in $S$. 
Hence, we are done for \ref{item:d}. 
%

For \ref{item:a}, we first prove the necessity part. 
Let $P$ be the $M$-exposed path between $u$ and $v$, 
and $w\in V(P)$ be such that $uw\in E(P)$. 
Then, $P - u$ is an $M$-balanced path from $w$ to $v$, 
which means $w\in D(G)$ by \ref{item:d}. 
Then, we have $u\in A(G)$, 
since the first part of the condition on $P$ yields 
$u\not\in D(G)$ by \ref{item:d}. 

Now we move on to the sufficiency part of \ref{item:a}. 
Note that the first part of the conclusion follows by \ref{item:d}. 
By the definition of $A(G)$, 
there exists $w \in D(G)$ such that $wu\in E(G)$. 
By \ref{item:d}, there is an $M$-balanced path $Q$ from $w$ to a vertex $v\in S$. 
If $u \in V(Q)$, 
then since $u\not\in D(G)$, 
the subpath of $Q$ from $v$ to $u$ 
is an $M$-exposed path between $v$ and $u$ by \ref{item:d}.   
Thus,  the claim follows. 
Otherwise, that is, if $u\not\in V(Q)$, 
then $Q + wu$ forms an $M$-exposed path between $v$ and $u$.   
Therefore,  again the claim follows. Thus, we are done for \ref{item:a}.  

Since we obtain \ref{item:d} and \ref{item:a}, 
consequently \ref{item:c} follows. 
\qed\end{proof}  
%
%
%
%
The next proposition is also known (see \cite{cc2005})  
and is easily obtained from Proposition~\ref{prop:gallai-edmonds}. 
\if0
By Proposition~\ref{prop:gallai-edmonds}, 
the following can be easily obtained. 
The property itself might be a folklore, 
however note that it can also be derived by quite an elementary way 
and independent from the Gallai-Edmonds structure theorem. 
\fi
\begin{proposition} \label{prop:factorizable-ge}
Let $G$ be a factorizable graph and $M$ be a perfect matching of $G$. 
Then, for any $x\in V(G)$, 
 the following hold:  
\renewcommand{\labelenumi}{\theenumi}
\renewcommand{\labelenumi}{{\rm \theenumi}}
\renewcommand{\theenumi}{(\roman{enumi})}
\begin{enumerate}
\item \label{item:f-d}
A vertex $u$ is in $D(G-x)$ if and only if 
there is an $M$-saturated path between $x$ and $u$. 
\item \label{item:f-a}
A vertex $u$ is in $A(G-x)\cup\{x\}$ if and only if 
there is no $M$-saturated path between $x$ and $u$,  
while there is an $M$-balanced path from $x$ to $u$. 
\item \label{item:f-c}
A vertex $u$ is in $C(G-x)$ if and only if 
there is neither an  $M$-saturated path between $u$ and $x$ 
nor an $M$-balanced path from $x$ to $u$. 
\end{enumerate}
\end{proposition}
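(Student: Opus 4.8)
The plan is to reduce the whole statement to Proposition~\ref{prop:gallai-edmonds}, applied not to $G$ but to $G-x$ together with a suitable maximum matching, and then to translate the resulting alternating-path conditions back to $G$ by attaching or deleting the single matching edge incident with $x$.

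First I would set up the reduction. Since $G$ is factorizable, $|V(G)|$ is even and $M$ covers $x$; write $x'$ for the vertex with $xx'\in M$ and set $M' := M\setminus\{xx'\}$. Then $M'$ is a matching of $G-x$ of cardinality $|V(G)|/2-1$, and since $|V(G-x)| = |V(G)|-1$ is odd, no matching of $G-x$ can be larger, so $M'$ is a maximum matching of $G-x$; moreover the set $S$ of vertices it exposes is exactly $\{x'\}$, because every other vertex of $G-x$ is still covered by the $M$-edge it had (that edge avoids $x$). Thus Proposition~\ref{prop:gallai-edmonds}, applied to $G-x$ and $M'$, describes membership in $D(G-x)$, $A(G-x)$, and $C(G-x)$ in terms of $M'$-balanced and $M'$-exposed paths ending at $x'$.

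Next I would prove two translation lemmas, for $u\in V(G-x)$: (a) there is an $M'$-balanced path from $u$ to $x'$ in $G-x$ if and only if there is an $M$-saturated path between $x$ and $u$ in $G$; and (b) there is an $M'$-exposed path between $u$ and $x'$ in $G-x$ if and only if there is an $M$-balanced path from $x$ to $u$ in $G$. In each case the correspondence is $P\mapsto P+xx'$ in one direction, appending the edge $xx'$ at the $x'$-end of $P$: this changes the number of edges by one, and since $xx'\in M$ it turns a near-perfect matching of $P$ exposing only $x'$ into a perfect matching of $P+xx'$ (for (a)), and a perfect matching of $P$ into a near-perfect matching of $P+xx'$ exposing only $u$ (for (b)). In the converse direction one observes that in an $M$-saturated path with endpoint $x$, respectively an $M$-balanced path from $x$, the edge incident with $x$ must be the unique $M$-edge $xx'$, so deleting the vertex $x$ produces the desired path in $G-x$. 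This bookkeeping of parities and of which endpoint is exposed is the step that needs the most care, though nothing in it is deep.

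Finally I would assemble the three equivalences. Item~\ref{item:f-d} is immediate from Proposition~\ref{prop:gallai-edmonds}\ref{item:d} together with lemma~(a). Item~\ref{item:f-c} follows from Proposition~\ref{prop:gallai-edmonds}\ref{item:c} and lemmas~(a) and~(b). For item~\ref{item:f-a}, when $u\neq x$ it follows from Proposition~\ref{prop:gallai-edmonds}\ref{item:a} and lemmas~(a) and~(b); the case $u=x$ must be checked directly, but it is trivial, since the one-vertex path is an $M$-balanced path from $x$ to $x$ while there is clearly no $M$-saturated path between $x$ and $x$, so $u=x$ satisfies the stated condition, consistently with its membership in $A(G-x)\cup\{x\}$. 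The only genuine obstacle, as indicated, is the careful edge-appending argument of the lemmas plus this $u=x$ corner case.
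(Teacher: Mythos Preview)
Your proposal is correct and follows essentially the same approach as the paper: reduce to Proposition~\ref{prop:gallai-edmonds} applied to $G':=G-x$ with the maximum matching $M':=M\setminus\{xx'\}$ exposing only $x'$, and translate the alternating-path conditions by appending or deleting the edge $xx'$. The paper's proof is terser (it states the translation for~\ref{item:f-d} and dismisses~\ref{item:f-a} and~\ref{item:f-c} with ``similar arguments''), whereas you spell out both translation lemmas and the $u=x$ corner case explicitly, but the underlying argument is the same.
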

\begin{proof}
Let $x'\in V(G)$ be such that $xx'\in M$. 
Let $G':= G -x$ and $M':= M \setminus \{xx'\}$. 
Note that apparently 
\begin{quotation} 
$M'$ is a maximum matching of $G'$, 
exposing only $x'$.
\end{quotation} 
By Propositions~\ref{prop:gallai-edmonds}, 
$u\in D(G')$ if and only if 
there is an $M'$-balanced path from $u$ to $x'$. 
Additionally, the following apparently holds:  
there is an $M'$-balanced path from $u$ to $x'$ in $G'$
if and only if 
there is an $M$-saturated path between $u$ and $x$ in $G$. 
Thus, we obtain \ref{item:f-d}. 
The other claims, \ref{item:f-a} and \ref{item:f-c}, 
also follow by similar arguments. 
\qed\end{proof}
\if0
\begin{theorem}\label{thm:ge2cathedral}
Let $G$ be a factorizalbe graph such that 
the poset $(\mathcal{G}(G), \yield)$ has the minimum element $G_0$. 
Then, $V(G_0)$ is the set of vertices 
that are never contained in $C(G-x)$ for any $x\in V(G)$. 
\end{theorem}
\fi

Proposition~\ref{prop:factorizable-ge} associates 
factorizable graphs with the Gallai-Edmonds partition, and   
 it will be used later in the proof of  Theorem~\ref{thm:ge2cathedral}.   
Hence it will contribute to the new proof of the cathedral theorem. 

\section{Outline of the New Proof} \label{sec:outline}

Here we give an outline of how we give a new proof of the cathedral theorem 
together with backgrounds of the theorem.  
In our previous work~\cite{kita2012a, kita2012b}, we revealed canonical structures 
of factorizable graphs. 
The key points of them are as follows.  
(We shall explain them in detail in Section~\ref{sec:canonical}.) 
\begin{enumerate}
\renewcommand{\labelenumi}{\theenumi}
\renewcommand{\labelenumi}{{\rm \theenumi}}
\renewcommand{\theenumi}{(\alph{enumi})}
\item \label{item:order}
For a factorizable graph $G$, 
a partial order $\yield$ can be defined on the factor-connected components $\mathcal{G}(G)$ (Theorem~\ref{thm:order}). 
\item \label{item:partition}
An equivalence relation $\gsim{G}$ based on factor-connected components can be defined on $V(G)$ (Theorem~\ref{thm:generalizedcanonicalpartition}). 
The equivalence classes by $\gsim{G}$ can be regarded as a generalization of Kotzig's canonical partition~\cite{kotzig1959a, kotzig1959b, kotzig1960}. 
\item \label{item:related} 
These two notions $\yield$ and $\gsim{G}$ are related each other 
in the sense that 
for  $H\in\mathcal{G}(G)$ 
a relationship between $H$ and its strict upper bounds in the poset $(\mathcal{G}(G), \yield)$   
can be described using $\gsim{G}$  
(Theorem~\ref{thm:base}).  
\end{enumerate}  
In Section~\ref{sec:alternating}, 
we begin to present new results in this paper.  
We further consider the structures given by \ref{item:order} \ref{item:partition} \ref{item:related} and show a relationship between the structures and the Gallai-Edmonds partition:   
\begin{quote}  
If the poset $(\mathcal{G}(G), \yield)$ of a factorizable graph $G$ 
has the minimum element $G_0$,   
then $V(G_0) = V(G)\setminus \bigcup_{x\in V(G)} C(G-x)$ (Theorem~\ref{thm:ge2cathedral}).  
\end{quote} 
This theorem later plays a crucial role
in the new proof of the cathedral theorem.

In Section~\ref{sec:proof}, we consider saturated graphs and present a new proof of the cathedral theorem. 
Given a saturated elementary graph and a family of saturated graphs satisfying a certain condition, we can define an operation, the {\em cathedral construction},  
that creates a new graph obtained from the given graphs by adding new edges.  
Here  the given graphs are 
called the {\em foundation} and the family of {\em towers}, respectively.  
We consider the structures by \ref{item:order} \ref{item:partition} \ref{item:related} for saturated graphs and obtain the following: 
\begin{quote}
If $G$ is a saturated graph, 
then the poset $(\mathcal{G}(G), \yield)$ has the minimum element $G_0$ (Lemma~\ref{lem:minimum}).  \end{quote}
Moreover, $G_0$ and all connected components of $G-V(G_0)$ are saturated 
and they are well-defined as a foundation and towers (Lemmas~\ref{lem:gpartispart} and \ref{lem:each_saturated}). 
We show that 
$G$ is the graph obtained from them by the cathedral construction (Theorem~\ref{thm:cathedral_nec}). 
%
\begin{quote}
Conversely, 
if a graph $G$ obtained by the cathedral construction 
from a foundation $G_0$ and some towers is saturated, 
and $G_0$ is the minimum element of the poset $(\mathcal{G}(G), \yield)$ (Theorem~\ref{thm:cathedral_suff}). 
\end{quote}
By Theorems~\ref{thm:cathedral_nec} and \ref{thm:cathedral_suff}, the constructive characterization of the saturated graphs---the most important part of the cathedral theorem---is obtained. Additionally, the other parts of the cathedral theorem follow quite smoothly by Theorem~\ref{thm:ge2cathedral} and the natures of the structures given by \ref{item:order} \ref{item:partition} \ref{item:related}.

\section{Canonical Structures of Factorizable Graphs}\label{sec:canonical}

In this section we introduce canonical structures of factorizable graphs, 
which will later turn out to be the underlying structure 
of  the cathedral theorem. 
They are composed mainly of three parts: 
a partial order on factor-connected components (Theorem~\ref{thm:order}), 
a generalization of the canonical partition (Theorem~\ref{thm:generalizedcanonicalpartition}), 
and a relationship between them (Theorem~\ref{thm:base}). 
\begin{definition}\label{def:separating}
Let $G$ be a factorizable graph. 
A set $X\subseteq V(G)$ is {\em separating} 
if any $H\in\mathcal{G}(G)$ satisfies 
$V(H)\subseteq X$ or $V(H)\cap X = \emptyset$.  
\end{definition} 

It is easy to see that  
the following four statements are equivalent for 
a factorizable graph $G$ and $X \subseteq V(G)$:  
\begin{enumerate}
\renewcommand{\labelenumi}{\theenumi}
\renewcommand{\labelenumi}{{\rm \theenumi}}
\renewcommand{\theenumi}{(\roman{enumi})}
\item 
The set $X$ is separating. 
\item 
Either $X = \emptyset$  or there exist $H_1,\ldots, H_k\in\mathcal{G}(G)$ 
with $X = V(H_1)\dot{\cup} \cdots \dot{\cup} V(H_k)$. 
\item 
For any perfect matching $M$ of $G$, 
$M$ contains a perfect matching of $G[X]$. 
\item 
For any perfect matching $M$ of $G$, 
$\delta_{G}(X)\cap M = \emptyset$. 
\end{enumerate}

\begin{definition}
Let $G$ be a factorizable graph. 
We define a binary relation $\yield$ on $\mathcal{G}(G)$ as follows: 
For $G_1, G_2\in \mathcal{G}(G)$, 
$G_1\yield G_2$ if there exists $X\subseteq V(G)$ such that 
\begin{enumerate}
\item $X$ is separating, 
\item $V(G_1)\cup V(G_2)\subseteq X$, and 
\item $G[X]/V(G_1)$ is factor-critical. 
\end{enumerate}
\end{definition}

For the above relation, the following theorem is known: 

\begin{theorem}[Kita~\cite{kita2012a, kita2012b}]\label{thm:order}
For any factorizable graph $G$, 
the binary relation $\yield$ is a partial order on $\mathcal{G}(G)$. 
\end{theorem}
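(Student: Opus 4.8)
The plan is to verify the three defining properties of a partial order for $\yield$: reflexivity, antisymmetry, and transitivity. Reflexivity is the easiest: for any $G_1\in\mathcal{G}(G)$, take $X = V(G_1)$. This set is separating (it is a single factor-connected component), it contains $V(G_1)\cup V(G_1) = V(G_1)$, and $G[X]/V(G_1)$ is a single vertex, which we have agreed to treat as factor-critical. Hence $G_1\yield G_1$.

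For antisymmetry, suppose $G_1\yield G_2$ and $G_2\yield G_1$ with witnessing separating sets $X$ and $Y$ respectively. The key observation I would use is a size/parity comparison coming from the factor-critical condition: if $G[X]/V(G_1)$ is factor-critical then $|X| - |V(G_1)|$ is even (contracting $V(G_1)$ leaves an odd-vertex factor-critical graph, so $|X| - |V(G_1)| + 1$ is odd), and moreover every factor-connected component of $G$ other than $G_1$ that lies inside $X$ contributes an even number of vertices to this difference while $G_1$ itself ``fills'' the contracted vertex. I would argue that $G_1\yield G_2$ forces, in a suitable sense, $G_2$ to sit ``above'' $G_1$, and I would look for a monotone quantity — for instance, one could try to show $V(G_2)\subseteq X$ together with the factor-critical structure implies $G_1$ and $G_2$ cannot both dominate each other unless $G_1 = G_2$, perhaps by intersecting $X$ and $Y$ and using that the intersection of separating sets is separating. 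The cleanest route is probably to show each relation $G_1 \yield G_2$ implies an inequality between some invariant of $G_1$ and $G_2$ (e.g. comparing the minimal separating set containing each), with equality only when $G_1 = G_2$.

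For transitivity, suppose $G_1\yield G_2$ with separating set $X$, and $G_2\yield G_3$ with separating set $Y$. The natural candidate witness for $G_1\yield G_3$ is $X\cup Y$, or possibly a separating set built from both. I would check: $X\cup Y$ is separating (union of separating sets is separating, since each is a union of vertex sets of factor-connected components); $V(G_1)\cup V(G_3)\subseteq X\cup Y$ since $V(G_1)\subseteq X$ and $V(G_3)\subseteq Y$; and the main point, that $G[X\cup Y]/V(G_1)$ is factor-critical. For the last part I would use that $G[X]/V(G_1)$ is factor-critical and $G[Y]/V(G_2)$ is factor-critical, that $V(G_2)\subseteq X\cap Y$, and the standard fact that gluing a factor-critical graph onto another factor-critical graph along a contracted vertex (or along an ear-type attachment) preserves factor-criticality — i.e., if $J/V(G_1)$ is factor-critical and one attaches at $V(G_2)$ another factor-critical piece whose own contraction was factor-critical, the combined contraction is factor-critical. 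This will likely require a small lemma on factor-critical graphs: contracting a factor-critical subgraph of a factor-critical graph yields a factor-critical graph, and conversely one can ``uncontract'' while preserving the property.

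\textbf{Main obstacle.} I expect antisymmetry to be the delicate point: reflexivity is immediate and transitivity reduces to a manipulation of factor-critical graphs under contraction, which is standard. Antisymmetry, by contrast, needs a genuinely asymmetric invariant to break the tie between $G_1\yield G_2$ and $G_2\yield G_1$, and identifying the right invariant (a size count, a containment of minimal separating sets, or a matching-structural argument showing one component is ``closer to the bottom'') is where the real work lies. I would be prepared to fall back on the explicit combinatorial descriptions of $\yield$ from the preceding references~\cite{kita2012a, kita2012b} if the direct argument stalls.
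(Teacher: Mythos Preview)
The paper does not prove this theorem: it is quoted from the author's earlier work~\cite{kita2012a, kita2012b} and simply stated here without argument, so there is no ``paper's own proof'' to compare against.

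On the merits of your sketch: reflexivity is correct as written, and your transitivity plan (take $X\cup Y$, which is separating and contains $V(G_1)\cup V(G_3)$, then argue $G[X\cup Y]/V(G_1)$ is factor-critical by combining the two given factor-critical contractions through $G_2$) is the natural line and can be made to work, though the ``gluing'' step needs a precise lemma rather than an appeal to a standard fact. The genuine gap is antisymmetry: you have not actually proposed an argument, only a list of possible invariants (size/parity, minimal separating sets, intersection $X\cap Y$) without checking that any of them is actually monotone under $\yield$. Parity alone cannot separate $G_1$ from $G_2$, since every factor-connected component has even order; and $X\cap Y$ being separating does not by itself give factor-criticality of either contraction. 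In the source papers the relation $\yield$ is tied to reachability by $M$-alternating paths and $M$-ears (compare Proposition~\ref{prop:ear-base} and Lemma~\ref{lem:reach} here), and antisymmetry ultimately comes from that asymmetric reachability structure rather than from a crude size count. If you want a self-contained proof, that is the direction to push; as it stands, your antisymmetry paragraph is a statement of intent rather than a proof, which you yourself acknowledge by planning to fall back on~\cite{kita2012a, kita2012b}.
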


\begin{definition}
Let $G$ be a factorizable graph. 
We define a binary relation $\gsim{G}$ on $V(G)$ as follows: 
For $u,v\in V(G)$, 
$u\gsim{G} v$  if   
%
\begin{enumerate}
\item 
$u$ and $v$ are contained in the same factor-connected component 
and 
\item 
either 
$u$ and $v$ are identical,  or $G-u-v$ is not factorizable.  
\end{enumerate}
\end{definition}

We also have a theorem for the relation $\gsim{G}$: 

\begin{theorem}[Kita~\cite{kita2012a, kita2012b}]\label{thm:generalizedcanonicalpartition}
For any factorizable graph $G$, 
the binary relation $\gsim{G}$ is an equivalence relation on $V(G)$. 
\end{theorem}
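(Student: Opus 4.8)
The plan is to verify the three axioms of an equivalence relation, with essentially all of the work in transitivity. Reflexivity and symmetry are immediate: every vertex of a factorizable graph lies in a (unique) factor-connected component, so condition~1 of the definition holds for the pair $(u,u)$ while condition~2 holds because $u=u$; and both defining conditions are visibly invariant under interchanging $u$ and $v$ (``$u$ and $v$ lie in the same factor-connected component'', ``$u=v$'', and ``$G-u-v$ is not factorizable'' are each unchanged by the swap).

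So suppose $u\gsim{G}v$ and $v\gsim{G}w$; I must show $u\gsim{G}w$. All three vertices then lie in one common factor-connected component $H$ (the one containing $v$), which settles condition~1, and I may assume $u,v,w$ pairwise distinct, since otherwise $u\gsim{G}w$ is one of the hypotheses or follows from reflexivity. Using that $V(H)$ is separating, one checks that for $x,y\in V(H)$ the graph $G-x-y$ is factorizable iff $G[V(H)]-x-y$ is (analyse $N\Delta M'$ for a perfect matching $N$ of $G$ and a perfect matching $M'$ of $G-x-y$, noting that no edge between distinct factor-connected components is allowed); so I may replace $G$ by the elementary graph $H$, and it remains to prove that if $H-u-v$ and $H-v-w$ are both non-factorizable, then so is $H-u-w$.

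Fix a perfect matching $N$ of $H$. By Proposition~\ref{prop:factorizable-ge} applied with deleted vertex $v$, for $z\ne v$ the graph $H-v-z$ is factorizable iff $z\in D(H-v)$ iff there is an $N$-saturated path between $v$ and $z$ in $H$; indeed $N\Delta E(P)$ is a perfect matching of $H-v-z$ for such a path $P$, and conversely $N\Delta M'$ contains such a path for any perfect matching $M'$ of $H-v-z$. Writing $x\approx y$ for ``$x=y$, or there is no $N$-saturated path between $x$ and $y$ in $H$'', the hypotheses say $u\approx v$ and $v\approx w$, and the goal is $u\approx w$; it is convenient to prove the contrapositive: if $u,v,w$ are pairwise distinct and there is an $N$-saturated path $P$ between $u$ and $w$, then there is an $N$-saturated path between $v$ and $u$, or one between $v$ and $w$. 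If $v$ lies on $P$, this is immediate: $P$ has odd length and $N\cap E(P)$ is a perfect matching of $P$, so $v$ is matched by $N$ within $P$ to exactly one of its two $P$-neighbours, and accordingly exactly one of the two sub-paths of $P$ ending at $v$ has both end edges in $N$; that sub-path is $N$-saturated and joins $v$ to $u$ or to $w$.

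The remaining case $v\notin V(P)$ is the crux, and this is the step I expect to be the main obstacle. Here I would use that $H$ is elementary --- hence $2$-connected for $|V(H)|\ge 4$ (the case $|V(H)|=2$ being vacuous), and, more to the point, factor-connected --- to bring $v$ into contact with $P$ through an alternating structure: e.g. take a shortest $N$-alternating path from $v$, or from its $N$-partner, to $V(P)$, and splice it with the matching edge at $v$ and a suitable sub-path of $P$ so as to obtain an $N$-saturated path from $v$ to an endpoint of $P$; equivalently, one may try to exhibit a perfect matching $M$ of $H-u-w$ for which the unique $u$--$w$ alternating component of $M\Delta N$ passes through $v$, thereby reducing to the previous case. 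The delicate point --- and the reason this step is hard --- is to arrange the parities so that the resulting path has both of its end edges in $N$ (i.e. is genuinely $N$-saturated); this is exactly where elementarity of $H$, equivalently the canonical-structure results underlying Theorem~\ref{thm:order} and the properties of separating sets, is genuinely needed rather than mere connectivity. Once transitivity of $\approx$ is established, together with the trivial reflexivity and symmetry, $\gsim{G}$ is an equivalence relation.
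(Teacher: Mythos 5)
The paper does not prove Theorem~\ref{thm:generalizedcanonicalpartition}; it cites it to~\cite{kita2012a, kita2012b}. So there is no in-paper proof to compare against, and your proposal must stand on its own. Unfortunately it has a fatal gap right at the reduction step, before the part you flag as the obstacle.

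You claim that for $x,y\in V(H)$ the graph $G-x-y$ is factorizable iff $G[V(H)]-x-y$ is, and use this to pass from $G$ to the elementary graph $H$. Only one direction of that equivalence is true, and it is the wrong one for your purposes. If $H-x-y$ is factorizable then so is $G-x-y$ (extend by a perfect matching of $G-V(H)$); this is exactly Fact~\ref{fact:refinement}. The converse fails. Take $G$ to be the $4$-cycle $v_1v_2v_3v_4$, a disjoint edge $w_1w_2$, plus the extra edges $v_1w_1$ and $v_3w_2$. The allowed edges are just the cycle and $w_1w_2$, so $H:=G[\{v_1,\dots,v_4\}]$ is a factor-connected component. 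Now $G-v_2-v_4$ has the perfect matching $\{v_1w_1,\,v_3w_2\}$, so it is factorizable, whereas $H-v_2-v_4$ has two isolated vertices and is not. The justification you sketch --- ``no edge between distinct factor-connected components is allowed'' --- is a statement about perfect matchings of $G$, but $M'$ is a perfect matching of $G-x-y$, a different graph, and is free to use $G$-non-allowed edges; the path component of $N\Delta M'$ can and does leave $V(H)$, as the example shows. In fact the paper flags this precisely: the coincidence $\pargpart{G}{G_0}=\gpart{G_0}$ is established only for saturated $G$ (Lemma~\ref{lem:gpartispart}), whose proof leans on Fact~\ref{fact:complete}, a saturation-specific property. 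Since $\gsim{G}\!\restriction_{V(H)}$ is in general a \emph{strict} refinement of $\gsim{H}$ as a relation, transitivity of $\gsim{H}$ does not give you transitivity of $\gsim{G}$; a subrelation of an equivalence relation need not be one.

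There is a second gap even if the reduction were repaired: you do not actually prove transitivity in the elementary case. That case is Kotzig's canonical-partition theorem and is itself a substantive result; you handle the situation $v\in V(P)$ but acknowledge $v\notin V(P)$ as ``the main obstacle'' and leave only a heuristic. So the argument is incomplete there as well. A correct route would have to work in $G$ directly (for instance via Fact~\ref{fact:deletable2path}, reformulating $\gsim{G}$ in terms of absence of $M$-saturated paths in $G$ itself and establishing transitivity of that path-nonexistence relation), or else invoke the structural machinery of~\cite{kita2012a, kita2012b}; the shortcut through $H$ is not available.
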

If a graph $G$ is elementary, then 
the family of equivalence classes by $\gsim{G}$, i.e., $V(G)/\gsim{G}$  coincides with 
Kotzig's {\em canonical partition}%
~\cite{lp1986, kotzig1959a, kotzig1959b, kotzig1960} (see \cite{kita2012a, kita2012b}).  
Therefore, 
given a factorizable graph $G$,  
we call $V(G)/\gsim{G}$  
the {\em generalized canonical partition}, 
and denote it by $\gpart{G}$. 
By the definition of $\gsim{G}$, 
each member of $\gpart{G}$ 
is  contained in some factor-connected component.
%
%
%
Therefore,  
$\pargpart{G}{H} := \{ S\in \gpart{G}: S\subseteq V(H)\}$ 
forms a partition of $V(H)$ for each $H\in\mathcal{G}(G)$. 

Note also the following, 
 which are also stated in \cite{kita2012a, kita2012b}:   
\begin{fact}\label{fact:refinement}
Let $G$ be a factorizable graph, and let $H\in\mathcal{G}(G)$. 
Then, $\pargpart{G}{H}$ is a refinement of $\gpart{H} = \pargpart{H}{H}$; 
that is,  
if $u,v\in V(H)$ satisfies $u\gsim{G} v$, then $u\gsim{H} v$ holds. 
\end{fact}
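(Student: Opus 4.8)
The statement to prove is Fact~\ref{fact:refinement}: for a factorizable graph $G$ and $H\in\mathcal{G}(G)$, the partition $\pargpart{G}{H}$ refines $\gpart{H}$; equivalently, $u\gsim{G} v$ implies $u\gsim{H} v$ whenever $u,v\in V(H)$.

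\medskip

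\textbf{Proof proposal.} The plan is to argue directly from the definitions of $\gsim{G}$ and $\gsim{H}$. Fix $H\in\mathcal{G}(G)$ and take $u,v\in V(H)$ with $u\gsim{G} v$. If $u=v$ there is nothing to prove, so assume $u\neq v$; then by the definition of $\gsim{G}$ we know $G-u-v$ is not factorizable, and we must deduce that $H-u-v$ is not factorizable. The natural approach is contrapositive: suppose $H-u-v$ \emph{is} factorizable, say with perfect matching $N$; I will build a perfect matching of $G-u-v$ and reach a contradiction.

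\medskip

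First I would recall that since $H\in\mathcal{G}(G)$, the vertex set $V(H)$ is separating (it is the union of one factor-connected component, so by the equivalence listed just after Definition~\ref{def:separating}, for any perfect matching $M$ of $G$ we have $\delta_G(V(H))\cap M=\emptyset$, i.e.\ $M$ restricted to $V(H)$ is a perfect matching of $H$, and $M$ restricted to $V(G)\setminus V(H)$ is a perfect matching of $G-V(H)$). In particular $G$ being factorizable forces $G-V(H)$ to be factorizable; fix a perfect matching $M_0$ of $G-V(H)$. Now, given the hypothesized perfect matching $N$ of $H-u-v$, the edge set $N\cup M_0$ is a matching of $G$ that covers exactly $V(G)\setminus\{u,v\}$, hence a perfect matching of $G-u-v$. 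This shows $G-u-v$ is factorizable, contradicting $u\gsim{G} v$ (together with $u\neq v$). Therefore $H-u-v$ is not factorizable, and since $u,v$ lie in the same factor-connected component $H$ of $G$ — which is elementary, hence in particular a single connected factorizable graph, so the first clause of $\gsim{H}$ is satisfied — we conclude $u\gsim{H} v$.

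\medskip

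The only genuinely delicate point is the direction ``$G-V(H)$ is factorizable,'' which I am getting from the separating property of $V(H)$: a perfect matching of $G$ splits as a perfect matching of $G[V(H)]=H$ plus a perfect matching of $G-V(H)$. This is exactly the content of the equivalence $(i)\Leftrightarrow(iv)$ stated immediately after Definition~\ref{def:separating} (``for any perfect matching $M$ of $G$, $\delta_G(X)\cap M=\emptyset$''), applied with $X=V(H)$, which is separating because it is the vertex set of a single factor-connected component. Everything else is a routine recombination of matchings on disjoint vertex sets; no computation is required. I expect this to be the main (very mild) obstacle only in the sense of making sure the separating machinery is invoked correctly rather than reproved.
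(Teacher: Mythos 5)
Your argument is correct and is essentially the paper's own proof: both pass to the contrapositive and combine a perfect matching of $H-u-v$ with a perfect matching of $G-V(H)$ to produce a perfect matching of $G-u-v$. The only difference is that you spell out the (correct) justification that $G-V(H)$ is factorizable via the separating property of $V(H)$, which the paper simply asserts.
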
 
\begin{proof}
We prove the contrapositive. 
Let $u, v\in V(H)$ be such that 
$u \not \gsim{H} v$, 
which is equivalent to $u$ and $v$ satisfying 
$u\neq v$ and $H-u-v$ is factorizable. 
Let $M$ be a perfect matching of $H-u-v$. 
Since $G-V(H)$ is also factorizable,  
 by letting $M'$ be a perfect matching of it, 
we can construct a perfect matching of $G-u-v$, 
namely,  $M \cup M'$. 
Therefore, $u \not \gsim{G} v$. 
\qed\end{proof}

The following fact can be immediately obtained by Property~\ref{prop:deletable}.

\begin{fact}\label{fact:deletable2path}
Let $G$ be a factorizable graph, and $M$ be a perfect matching of $G$. 
Let $u, v\in V(G)$ be vertices contained in the 
same factor-connected component of $G$. 
Then, 
$u\gsim{G} v$ if and only if 
there is no $M$-saturated path between $u$ and $v$. 
\end{fact}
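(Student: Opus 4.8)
The plan is to unfold the definition of $\gsim{G}$, use the hypothesis to discharge one of its two defining clauses, and then translate the surviving clause into a statement about $M$-alternating paths via Property~\ref{prop:deletable}. Since $u$ and $v$ are assumed to lie in the same factor-connected component of $G$, the first clause in the definition of $u\gsim{G} v$ holds automatically; hence $u\gsim{G} v$ is equivalent to the disjunction ``$u=v$, or $G-u-v$ is not \matchable''. It therefore suffices to prove that, for $u\neq v$, the graph $G-u-v$ is \matchablesp if and only if there is an $M$-saturated path between $u$ and $v$, and to settle the degenerate case $u=v$ by hand.

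For the degenerate case, a path between $u$ and $u$ must be the trivial one-vertex path, which has no edges and is therefore $M$-\zerod rather than $M$-saturated; so no $M$-saturated path between $u$ and $u$ exists, which is exactly consistent with $u\gsim{G} u$ always holding. For $u\neq v$, I would invoke Property~\ref{prop:deletable}, which asserts precisely that $G-u-v$ is \matchablesp if and only if $G$ has an $M$-saturated path joining $u$ and $v$. If one prefers a self-contained argument here: given a perfect matching $N$ of $G-u-v$, the subgraph $G.(M\Delta N)$ has $u$ and $v$ as its only vertices of degree one, so the connected component containing $u$ is a path $P$ whose other end is $v$; as $u$ and $v$ are covered by $M$ but not by $N$, the edges of $P$ alternate beginning and ending with edges of $M$, whence $M\cap E(P)$ is a perfect matching of $P$ and $P$ is $M$-saturated. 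Conversely, if $P$ is an $M$-saturated path between $u$ and $v$, then $M\Delta E(P)$ is a matching covering exactly $V(G)\setminus\{u,v\}$, i.e.\ a perfect matching of $G-u-v$.

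Putting the reductions together, $u\gsim{G} v$ holds if and only if ``$u=v$ or $G-u-v$ is not \matchable'', which in turn holds if and only if there is no $M$-saturated path between $u$ and $v$, as claimed. There is no genuine obstacle in this argument; the only points needing care are the bookkeeping for the case $u=v$ and keeping the parity and endpoint conventions for $M$-saturated paths (odd length, both ends covered by $M\cap E(P)$) in line with the definitions of Section~\ref{sec:pre}.
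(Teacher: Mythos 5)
Your proof is correct and takes essentially the same route the paper does: the paper simply remarks that the fact is ``immediately obtained by Property~\ref{prop:deletable},'' which is precisely the reduction you perform (discharge the first clause of $\gsim{G}$ using the hypothesis, handle $u=v$ by hand, and translate the remaining clause via Property~\ref{prop:deletable}). Your added re-derivation of Property~\ref{prop:deletable} via $G.(M\Delta N)$ matches the appendix proof as well.
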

%

\begin{definition}
Let $G$ be a factorizable graph, and let $H\in\mathcal{G}(G)$. 
We denote the upper bounds of $H$ in the poset $(\mathcal{G}(G), \yield)$ 
by $\parupstar{G}{H}$; 
that is,  
$\parupstar{G}{H} := \{ H'\in\mathcal{G}(G): H\yield H'\}$. 
We define 
$\parup{G}{H} := \parupstar{G}{H}\setminus \{H\}$, 
and the vertices contained in $\parupstar{G}{H}$ (resp. $\parup{G}{H}$ ) 
as $\vparupstar{G}{H}$ (resp. $\vparup{G}{H}$ ); 
i.e., 
$\vparupstar{G}{H} := \bigcup_{H'\in\parupstar{G}{H}} V(H')$ and 
$\vparup{G}{H} := \bigcup_{H'\in\parup{G}{H}} V(H')$.  
We often omit the subscripts ``$G$'' 
if they are apparent from contexts. 
\end{definition}
There is a relationship between the partial order and the generalized canonical partition:  
%
\begin{theorem}[Kita~\cite{kita2012a, kita2012b}]\label{thm:base}
Let $G$ be a factorizable graph, 
and let $H\in\mathcal{G}(G)$. 
Let $K$ be one of the connected components of $G[\vup{H}]$. 
Then, 
there exists $S_K\in\pargpart{G}{H}$ such that 
$\Gamma_{G}(K)\cap V(H)\subseteq S_K$. 
\end{theorem}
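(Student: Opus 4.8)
The plan is to show that the neighborhood of $K$ inside $H$ cannot "spread out" across two distinct members of the generalized canonical partition of $H$. So suppose, for contradiction, that there are vertices $u, v \in \Gamma_G(K) \cap V(H)$ with $u \not\gsim{G} v$ (hence in particular $u \neq v$, and $G - u - v$ is factorizable). The goal is to derive that some structural obstruction fails — most naturally, that the factor-criticality condition in the definition of $\yield$ is violated for some separating set, or that $K$ together with $H$ fails to be consistent with the poset structure. First I would fix a perfect matching $M$ of $G$. Since $K$ is a connected component of $G[\vup{H}]$, the set $V(H) \cup \vup{H}$ (the vertices of $\parupstar{G}{H}$) is separating, and for any $H' \in \parup{G}{H}$ the contraction $G[\vparupstar{G}{H}]/V(H)$ restricted appropriately is factor-critical; I would want to use this to build an $M$-saturated path between $u$ and $v$ passing through $K$.

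**The key steps, in order.** (1) Using Fact~\ref{fact:deletable2path}: since $u, v \in V(H)$ lie in the same factor-connected component $H$, showing $u \gsim{G} v$ is equivalent to showing there is \emph{no} $M$-saturated path between $u$ and $v$ in $G$; equivalently, to reach a contradiction from $u \not\gsim{G} v$ I may instead produce such a path. Actually the cleaner route: assume $u \not\gsim{G} v$, i.e. there \emph{is} an $M$-saturated path $P$ between $u$ and $v$. (2) Because $u$ and $v$ both have neighbors in $K$, pick $u', v' \in V(K)$ with $uu', vv' \in E(G)$. Since $K$ is connected, there is a path $Q$ in $K$ from $u'$ to $v'$; by the standard trick, I can extract from $Q$ together with $M \cap E(G[\vup{H}])$ an $M$-alternating structure, and using that $G[\vparupstar{G}{H}]/V(H)$ is factor-critical (so $G[\vup{H}]$ is "near-factorizable" relative to contact points in $H$), arrange an $M$-saturated path $R$ between $u$ and $v$ whose internal vertices all lie in $\vup{H}$. (3) Then $P \cup R$ (or rather, the subgraph $G . (M \triangle E(P) \triangle E(R))$) yields a contradiction with $H \in \mathcal{G}(G)$ being an elementary component, or with the maximality/partial-order structure — specifically it would force an allowed edge between $H$ and $K$'s component in a way incompatible with $H$ and that component being \emph{distinct} factor-connected components. (4) Conclude that no such $u, v$ exist, so all of $\Gamma_G(K) \cap V(H)$ lies in a single class $S_K \in \pargpart{G}{H}$.

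**Alternative cleaner framing.** Rather than going through $M$-alternating paths by hand, I expect the intended proof leans on Theorem~\ref{thm:order} and the definition of $\yield$ directly: for each $H' \in \parup{G}{H}$ with $V(H') \cap V(K) \neq \emptyset$, there is a separating set $X_{H'}$ with $V(H) \cup V(H') \subseteq X_{H'}$ and $G[X_{H'}]/V(H)$ factor-critical. Factor-criticality of a contraction $G[X]/V(H)$ implies that for any two vertices in the contracted graph there is an ear/path structure; translating back, any two vertices of $\Gamma_G(V(H)) \cap \vup{H}$-adjacent-into-$H$ are linked by an $M$-saturated path avoiding $V(H)$ internally when we delete the "image" vertex. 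The precise statement I would cite is that in a factor-critical graph $F$, for any vertex $z$ and any perfect matching $M_z$ of $F - z$, and any two edges at $z$, there is an $M_z$-saturated path between their far endpoints. Applying this with $F = G[X_{H'}]/V(H)$, $z$ the contracted vertex, gives exactly an $M$-saturated path in $G$ between the two neighbors $u, v \in V(H)$ — forcing $G - u - v$ non-factorizable, i.e. $u \gsim{G} v$.

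**The main obstacle** will be step (2)/(3): correctly handling the matching on $\vup{H}$ and the contracted-graph factor-criticality so that the $M$-saturated path I build really has both endpoints $u, v$ in $V(H)$ and really is $M$-saturated (perfect matching on the path). In particular, I must be careful that $K$ is a component of $G[\vup{H}]$ but the factor-critical contraction involves $G[X_{H'}]$ which may include \emph{several} such components plus $V(H)$; I need to confine attention to the single component $K$, which means I should choose $X_{H'}$ minimally or argue that $K$'s contact set with $V(H)$ is already forced into one class by considering a single $H' \in \parup{G}{H}$ meeting $K$ and then propagating across all of $K$ by connectivity. Ensuring this propagation (different $H'$'s inside the same $K$ all point to the \emph{same} class $S_K$) is where I expect to spend the most care — it should follow because $K$ is connected and the equivalence $\gsim{G}$ is transitive, but writing it cleanly requires matching up the separating sets consistently.
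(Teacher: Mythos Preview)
The paper does not prove Theorem~\ref{thm:base}; it is quoted from the author's earlier work and stated here without proof. So there is no in-paper argument to compare against, and I can only assess the internal soundness of your sketch.

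Your ``alternative cleaner framing'' contains a fatal sign error. You write that producing an $M$-saturated path in $G$ between $u,v\in V(H)$ ``forc[es] $G-u-v$ non-factorizable, i.e.\ $u\gsim{G}v$.'' That is backwards: by Property~\ref{prop:deletable} (equivalently Fact~\ref{fact:deletable2path}), the existence of an $M$-saturated path between $u$ and $v$ shows that $G-u-v$ \emph{is} factorizable, hence $u\not\gsim{G}v$. So this route would establish the negation of what you want.

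The first plan is closer in spirit, but step~(2) asks for an object that cannot exist. An $M$-saturated path $R$ from $u\in V(H)$ to $v\in V(H)$ with all internal vertices in $\vup{H}$ would have its first and last edges in $M$ yet lying in $\delta_G(V(H))$; since $V(H)$ is separating, $\delta_G(V(H))\cap M=\emptyset$, a contradiction. What one can hope to build is an $M$-\emph{ear} relative to $H$ with ends $u,v$ and interior in $K$; once you have that, Proposition~\ref{prop:ear-base} gives $u\gsim{G}v$ immediately and your step~(3) becomes unnecessary. The real content is therefore exactly the construction of such an ear for an \emph{arbitrary} pair $u,v\in\Gamma_G(K)\cap V(H)$, and this you have not supplied. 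Factor-criticality of $G[X]/V(H)$ gives, via Property~\ref{prop:path2root}, an $M$-balanced path from any vertex to the contracted vertex; unwound, that is an $M$-ear from one prescribed endpoint to \emph{some} vertex of $V(H)$, not to the second prescribed endpoint $v$. Closing that gap (what you yourself flag as the ``main obstacle'') is the substance of the proof, and it is left open in your sketch.
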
 
In the above theorem, 
$\vup{H}$ and $\pargpart{G}{H}$ are notions determined by $\yield$ and $\gsim{G}$, respectively.  
Therefore, Theorem~\ref{thm:base} describes a relationship between 
$\yield$ and $\gsim{G}$.  
Let us add some propositions used later in this paper. 
\begin{proposition}[Kita~\cite{kita2012a, kita2012b}]\label{prop:ear-base}
Let $G$ be a factorizable graph and $M$ be a perfect matching of $G$,  
and let $H\in\mathcal{G}(G)$. 
Then, for any $M$-ear $P$ relative  to $H$, 
its end vertices $u,v\in V(H)$ satisfy 
$u\gsim{G} v$. 
\end{proposition}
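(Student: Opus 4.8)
\textbf{Proof proposal for Proposition~\ref{prop:ear-base}.}

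The plan is to distinguish whether the ear $P$ is a circuit or a path. If $P$ is a circuit relative to $H$, then its ``end vertices'' are the unique vertex of $V(P)\cap V(H)$, so $u=v\in V(H)$ and $u\gsim{G}v$ holds directly from the definition of $\gsim{G}$. So assume $P$ is a path and write $P\colon u=p_0,p_1,\ldots,p_k,p_{k+1}=v$ with $p_1,\ldots,p_k\notin V(H)$. Since $P-V(H)=P-u-v$ is $M$-saturated, $k$ is a positive even number and $M\cap E(P)=\{p_1p_2,p_3p_4,\ldots,p_{k-1}p_k\}$, so $M$ matches $p_1,\ldots,p_k$ in consecutive pairs along $P$. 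Because $H\in\mathcal{G}(G)$, the set $V(H)$ is separating, hence $\delta_{G}(V(H))\cap M'=\emptyset$ for every perfect matching $M'$ of $G$. Applied to $M$ this shows that the $M$-partners $u',v'$ of $u,v$ (so $uu',vv'\in M$) lie in $V(H)$; and since $p_1,p_k\notin V(H)$, the edges $up_1$ and $p_kv$ lie in $\delta_{G}(V(H))$ and therefore belong to no perfect matching of $G$. It is this last property of $up_1$ that the argument will contradict.

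Since $u$ and $v$ lie in the same factor-connected component $H$, Fact~\ref{fact:deletable2path} reduces the claim to showing that $G$ has no $M$-saturated path between $u$ and $v$. Suppose $R$ is such a path. Its edges at $u$ and at $v$ are then the $M$-edges $uu'$ and $vv'$, so $N:=M\Delta E(R)$ is a perfect matching of $G-u-v$ that exposes exactly $u$ and $v$. Consider first the case that $R$ is internally disjoint from $\{p_1,\ldots,p_k\}$. Then $R$ and $P$ share no edge --- every edge of $P$ has an endpoint among $p_1,\ldots,p_k$ --- so $N$ agrees with $M$ on all edges meeting $\{p_1,\ldots,p_k\}$; in particular $\{p_1p_2,p_3p_4,\ldots,p_{k-1}p_k\}\subseteq N$ while $up_1,p_kv\notin N$. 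A check of degrees at each vertex then shows that $N\Delta E(P)$ is again a perfect matching of $G$: it re-matches $u$ to $p_1$, $p_2$ to $p_3$, $\ldots$, $p_k$ to $v$, and leaves every other edge of $N$ in place. But $up_1\in N\Delta E(P)$, contradicting that $up_1$ lies in no perfect matching of $G$.

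It remains to reduce the general case to this one, that is, to exhibit an $M$-saturated path between $u$ and $v$ that is internally disjoint from $\{p_1,\ldots,p_k\}$. This is equivalent to showing that $G-V(P)$ is factorizable whenever $G-u-v$ is: indeed $V(H)$ stays separating and $H$ stays a factor-connected component after deleting $p_1,\ldots,p_k$ (every perfect matching of $G-\{p_1,\ldots,p_k\}$ extends to one of $G$ by adding $p_1p_2,\ldots,p_{k-1}p_k$), and Fact~\ref{fact:deletable2path} applied in $G-\{p_1,\ldots,p_k\}$ matches up $M$-saturated $u$--$v$ paths avoiding $\{p_1,\ldots,p_k\}$ with perfect matchings of $G-V(P)$. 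The plan is to take an $M$-saturated path $R$ between $u$ and $v$ meeting $\{p_1,\ldots,p_k\}$ in the fewest vertices and, using that $R$ is $M$-alternating while each $p_i$ is $M$-matched to a $P$-neighbour --- so that $R$ can enter and leave $\{p_1,\ldots,p_k\}$ only along non-matching edges --- to substitute a stretch of the ear $P$ for the corresponding stretch of $R$, producing an $M$-saturated $u$--$v$ path that meets $\{p_1,\ldots,p_k\}$ in fewer vertices, contrary to the choice of $R$. Carrying out this exchange so that the new path is still $M$-saturated, and ruling out the configurations in which it might break down, is the main obstacle; granting it, the second paragraph delivers the contradiction, so $G-u-v$ is not factorizable and hence $u\gsim{G}v$.
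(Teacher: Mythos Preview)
The paper does not prove Proposition~\ref{prop:ear-base}; it is quoted from \cite{kita2012a,kita2012b} without argument, so there is no in-paper proof to compare against. I will therefore comment only on the correctness of your attempt.

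Your treatment of the circuit case and of the ``disjoint'' path case is fine: when an $M$-saturated $u$--$v$ path $R$ avoids $\{p_1,\ldots,p_k\}$, the matching $(M\Delta E(R))\Delta E(P)$ really is a perfect matching of $G$ containing $up_1\in\delta_G(V(H))$, contradicting that $V(H)$ is separating.

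The gap is the reduction to that case, and it is a genuine one---you say so yourself (``granting it''). More importantly, the plan you sketch does not work as stated. You propose to take an $M$-saturated $u$--$v$ path $R$ minimizing $|V(R)\cap\{p_1,\ldots,p_k\}|$ and then ``substitute a stretch of the ear $P$ for the corresponding stretch of $R$'' to lower that count. But any subpath of $P$ between two internal vertices $p_a$ and $p_b$ uses \emph{all} of $p_a,p_{a+1},\ldots,p_b$, so replacing a piece of $R$ by a piece of $P$ can only increase the number of $p_i$'s visited, not decrease it. Routing instead from the first $p_a$ hit by $R$ along $P$ out to $u$ or to $v$ does not help either: the terminal edges $up_1$ and $p_kv$ of $P$ are non-matching edges, so the concatenated walk ends with a non-$M$ edge and is not $M$-saturated. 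In short, the exchange you describe does not produce an $M$-saturated $u$--$v$ path with fewer $p_i$'s, and no variant of it is evident. Note also that the intermediate equivalence you invoke---``$G-u-v$ factorizable implies $G-V(P)$ factorizable''---is exactly as hard as what you are trying to prove; there is no reason a perfect matching of $G-u-v$ must match $p_1,\ldots,p_k$ among themselves.

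So at present you have proved the proposition only under the extra hypothesis that some $M$-saturated $u$--$v$ path avoids the interior of the ear, and the remaining case is not handled.
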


\begin{theorem}[Kita~\cite{kita2012a, kita2012b}]\label{thm:add}
Let $G$ be a factorizable graph, 
$G_1\in\mathcal{G}(G)$ be a minimal element of the poset $(\mathcal{G}(G), \yield)$, 
and $G_2\in\mathcal{G}(G)$ be such that $G_1\yield G_2$ does not hold. 
Then, 
$G$ has (possibly identical) complement edges $e, f$ 
joining $V(G_1)$ and $V(G_2)$ 
such that 
$\mathcal{G}(G + e+ f) = \mathcal{G}(G)$ 
and $G_1\yield G_2$ in $(\mathcal{G}(G+e+f), \yield)$. 
\end{theorem}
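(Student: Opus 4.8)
The plan is to find, for a minimal component $G_1$ and a component $G_2$ with $G_1 \not\yield G_2$, a separating set $X$ containing $V(G_1) \cup V(G_2)$ such that adding one or two complement edges between $V(G_1)$ and $V(G_2)$ makes $G[X]/V(G_1)$ factor-critical while preserving the factor-connected components. First I would fix a perfect matching $M$ of $G$ and examine how $G_1$ and $G_2$ sit relative to one another under $\yield$. Since $G_1$ is minimal and $G_1 \not\yield G_2$, the two components are ``incomparable enough'' that no separating set currently witnesses $G_1 \yield G_2$; the goal is to create such a witness by a minimal edge addition. The natural candidate for $X$ is the union of $V(G_1)$, $V(G_2)$, and whatever components must be included to make a separating set over which $G[X]/V(G_1)$ becomes factor-critical after the additions — and one should expect $X = V(G_1) \cup V(G_2)$ itself (or the smallest separating superset forced by $\yield$) to work.

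Next I would analyze the effect of adding a complement edge $e = xy$ with $x \in V(G_1)$, $y \in V(G_2)$. The key technical point is: adding $e$ does not create any new allowed edges outside of $E(G_1) \cup E(G_2) \cup \{e\}$ that would merge further components, provided $e$ is chosen so that no perfect matching of $G + e$ uses $e$ together with a ``bridging'' structure spanning a third component — here I would invoke Proposition~\ref{prop:ear-base} and Theorem~\ref{thm:base} to control which vertices of $V(H)$ can be endpoints of $M$-ears, and Fact~\ref{fact:deletable2path} to translate $\gsim{G}$ into the (non)existence of $M$-saturated paths. To get $G[X]/V(G_1)$ factor-critical, I would argue that contracting $V(G_1)$ to a single vertex $g_1$ turns $G_2$ (which is elementary, hence $G_2/S$-type structures are factor-critical for $S \in \gpart{G_2}$) plus the new edge(s) into a factor-critical graph: one edge handles the parity to make $(G[X]/V(G_1)) - g_1 = G_2$ (or $G_2$ with a pendant structure) near-perfectly matchable after deleting any vertex, and a second edge $f$ may be needed precisely when a single $e$ leaves one residual vertex of $V(G_1)$ or $V(G_2)$ unmatchable in some near-perfect matching — this is why the statement allows $e$ and $f$, possibly equal.

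The main obstacle I anticipate is showing $\mathcal{G}(G + e + f) = \mathcal{G}(G)$, i.e., that the added edges become allowed \emph{only} inside (a merge of) $G_1$ and $G_2$ and do not accidentally make some previously non-allowed edge of $G$ allowed, nor split/merge other components. Concretely, one must rule out that $e$ lies on an $M$-alternating circuit passing through a third factor-connected component; this requires using minimality of $G_1$ in an essential way (so that $G_1$ has no component strictly below it that could serve as a ``turning point'') together with Theorem~\ref{thm:base}'s constraint that $\Gamma_G(K) \cap V(H)$ lands in a single canonical class $S_K$. I would handle this by taking $M' := M \triangle E(C)$ for any putative new $M$-alternating circuit $C$ through $e$ and deriving a contradiction with either the separating property of the components or with $G_1 \not\yield G_2$ being as ``tight'' as the minimality permits. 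Once the component structure is pinned down, verifying that $G_1 \yield G_2$ holds in $(\mathcal{G}(G+e+f), \yield)$ is just exhibiting $X$ as the required separating set with $(G+e+f)[X]/V(G_1)$ factor-critical, which the edge-choice construction gives directly.
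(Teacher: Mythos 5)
First note that the paper does not actually contain a proof of Theorem~\ref{thm:add}: it is imported from Kita's earlier work \cite{kita2012a, kita2012b} and used as a black box, so there is no internal proof to compare your attempt against.

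That said, your proposal has a genuine gap: you have inverted the meaning of the conclusion $\mathcal{G}(G+e+f) = \mathcal{G}(G)$. Since $\yield$ is reflexive, the hypothesis $G_1 \not\yield G_2$ forces $G_1 \neq G_2$, so these are distinct factor-connected components. If either of the added edges $e, f$ (which join $V(G_1)$ to $V(G_2)$) were \emph{allowed} in $G+e+f$, then $G_1$ and $G_2$ would lie in the same elementary component of $G+e+f$, and $\mathcal{G}(G+e+f)$ would necessarily differ from $\mathcal{G}(G)$. Hence the conclusion demands that the added edges are \emph{not} allowed: no perfect matching of $G+e+f$ uses $e$ or $f$, so the set of perfect matchings (and therefore the set of factor-connected components) is unchanged. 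Yet you write of ensuring that ``the added edges become allowed only inside (a merge of) $G_1$ and $G_2$'' and of controlling $M$-alternating circuits through $e$, which is precisely what must be excluded entirely rather than confined. The actual content of the theorem is that one can add non-allowed edges that leave the family of perfect matchings intact but supply enough extra (non-matching) structure in $G[X]$ so that, for a suitable separating $X \supseteq V(G_1) \cup V(G_2)$, the contraction $(G+e+f)[X]/V(G_1)$ becomes factor-critical; this is why the $\yield$ relation can change even though $\mathcal{G}$ does not. As written, your plan would establish a claim incompatible with the theorem's conclusion, and the remaining steps (choosing $X$, invoking Theorem~\ref{thm:base} and Proposition~\ref{prop:ear-base}) are too schematic to repair this on their own.
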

Theorem~\ref{thm:add} will play a crucial role in Section~\ref{sec:proof} 
when we show that the poset by $\yield$ has the minimum element if a given graph is saturated (Lemma~\ref{lem:minimum}).

\section{Factorizable Graphs through the Gallai-Edmonds Partition}\label{sec:alternating}

In this section, 
we present a new result on a relationship between 
the Gallai-Edmonds partition and 
the canonical structures of factorizable graphs in Section~\ref{sec:canonical}. 
As we later see in Section~\ref{sec:proof}, 
Theorem~\ref{thm:ge2cathedral} can be regarded as a generalization of 
a part of the statements of the cathedral theorem. 

\begin{theorem}\label{thm:ge2cathedral}
Let $G$ be a factorizable graph such that 
the poset $(\mathcal{G}(G), \yield)$ has the minimum element $G_0$. 
Then, $V(G_0)$ is exactly the set of vertices 
that is disjoint from $C(G-x)$ for any $x\in V(G)$; 
that is, $V(G_0) = V(G)\setminus \bigcup_{x\in V(G)} C(G-x)$. 
\end{theorem}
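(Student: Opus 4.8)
The plan is to prove the two inclusions $V(G_0)\subseteq V(G)\setminus\bigcup_{x}C(G-x)$ and $V(G)\setminus\bigcup_{x}C(G-x)\subseteq V(G_0)$ separately, using Proposition~\ref{prop:factorizable-ge} to translate membership in $C(G-x)$ into the non-existence of certain $M$-alternating paths (for a fixed perfect matching $M$ of $G$), and using Theorems~\ref{thm:order}, \ref{thm:base} together with Proposition~\ref{prop:ear-base} to control those paths via the poset $(\mathcal{G}(G),\yield)$ and the partition $\gpart{G}$. Fix a perfect matching $M$ of $G$ throughout. By Proposition~\ref{prop:factorizable-ge}, a vertex $u\in V(G)$ lies in $C(G-x)$ iff there is neither an $M$-saturated path between $u$ and $x$ nor an $M$-balanced path from $x$ to $u$; equivalently, $u\notin\bigcup_x C(G-x)$ iff for every $x\in V(G)$ there is an $M$-alternating path from $x$ to $u$ (saturated or balanced, depending on parities).

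For the inclusion $V(G_0)\subseteq V(G)\setminus\bigcup_x C(G-x)$: take $u\in V(G_0)$ and an arbitrary $x\in V(G)$, say $x\in V(H)$ for some $H\in\mathcal{G}(G)$. Since $G_0$ is the minimum of the poset, $G_0\yield H$, so $V(G_0)$ and $V(H)$ both lie in some separating set $X$ with $G[X]/V(G_0)$ factor-critical. I would exploit factor-criticality of $G[X]/V(G_0)$: contracting $V(G_0)$ and deleting the image of $x$ leaves a factorizable graph, which I can lift back to an $M$-alternating structure in $G$ connecting $x$ to a vertex of $V(G_0)$, and then extend inside the elementary component $G_0$ (which is factor-critical-like in the relevant sense, via Fact~\ref{fact:deletable2path} and the canonical partition) to reach $u$ itself. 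This shows $u$ is reachable from $x$ by an $M$-alternating path of the appropriate parity, hence $u\notin C(G-x)$; since $x$ was arbitrary, $u\notin\bigcup_x C(G-x)$.

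For the reverse inclusion, I would argue contrapositively: suppose $u\notin V(G_0)$; I must produce some $x$ with $u\in C(G-x)$. Let $H\in\mathcal{G}(G)$ be the factor-connected component containing $u$, so $H\neq G_0$ and $G_0\yield H$ with $G_0\neq H$. The natural candidate for $x$ is a vertex of $V(G_0)$, or more precisely a vertex that "blocks" $u$: I want an $x$ such that no $M$-alternating path joins $x$ and $u$. Here Theorem~\ref{thm:base} is the key tool — a connected component $K$ of $G[\vup{G_0}]$ containing $H$ has its neighbourhood into $G_0$ confined to a single class $S_K\in\pargpart{G}{G_0}$, and Proposition~\ref{prop:ear-base} forces $M$-ears relative to $G_0$ to have both ends in one class of $\gpart{G}$. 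Choosing $x\in V(G_0)\setminus S_K$ (nonempty because $G_0$ is elementary with more than one class unless it is a single vertex, a case handled directly), any $M$-alternating path from $x$ into the region above would have to enter $G_0$'s "boundary" class $S_K$ and leave through $x$'s class, contradicting the single-class constraint from Proposition~\ref{prop:ear-base} and Theorem~\ref{thm:base}. Hence no such path exists and $u\in C(G-x)$ by Proposition~\ref{prop:factorizable-ge}.

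The main obstacle I anticipate is the bookkeeping in the first inclusion: translating factor-criticality of the contracted graph $G[X]/V(G_0)$ into a genuine $M$-alternating path in $G$ of the correct parity, and then splicing that path with an alternating path inside $G_0$ reaching the specific target vertex $u$ while keeping the alternation consistent at the splice point. This requires care about which perfect/near-perfect matchings of the contracted pieces lift compatibly with $M$, and about the parity of the resulting path; I expect to need Properties~\ref{prop:path2root} and \ref{prop:deletable} from the appendix, plus Fact~\ref{fact:deletable2path}, to make this precise. The second inclusion is more conceptual than computational, but one must not overlook the degenerate case $|V(G_0)|=1$, where the class-separation argument is vacuous and a direct parity/connectivity argument is needed instead.
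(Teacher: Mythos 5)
Your argument for the inclusion $V(G)\setminus V(G_0)\subseteq\bigcup_x C(G-x)$ has a concrete error: the choice of $x$ is backwards. You pick $x\in V(G_0)\setminus S_K$, where $S_K$ is the class of $\pargpart{G}{G_0}$ to which the component $K$ of $G[\vup{G_0}]$ containing $u$ attaches. But for such an $x$ there actually \emph{is} an $M$-saturated path between $x$ and $u$: from $u$ one can descend into $S_K$ along an $M$-balanced path (Lemma~\ref{lem:reach}\ref{item:up-path}), and from any vertex of $S_K$ one reaches any vertex of $V(G_0)\setminus S_K$, in particular $x$, by an $M$-saturated path (Lemma~\ref{lem:reach}\ref{item:path2up}); splicing these gives exactly the statement of Lemma~\ref{lem:combination}\ref{item:up2up}. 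By Proposition~\ref{prop:factorizable-ge}\ref{item:f-d} this puts $u\in D(G-x)$, the opposite of what you want. The correct choice is $x\in S_K$ itself: Lemma~\ref{lem:reach}\ref{item:nosaturate} states that for $x\in S_K$ and $u\in\vup{S_K}$ there is neither an $M$-saturated nor an $M$-balanced path from $x$ to $u$, which by Proposition~\ref{prop:factorizable-ge}\ref{item:f-c} gives $u\in C(G-x)$. The informal picture is that each tower is matched down onto its own attachment class, so an alternating path cannot start in that class and climb into the tower, whereas from any other class of $G_0$ one can cross $G_0$, enter the attachment class with the right parity, and continue up. Your appeal to Proposition~\ref{prop:ear-base} and Theorem~\ref{thm:base} does not rule such a path out: those facts constrain the \emph{internal ears} of the path relative to $V(G_0)$, but the initial segment from $x$ to the class $S_K$ is not an ear and is not forbidden.

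Two further remarks. First, the degenerate case you flag ($|V(G_0)|=1$) cannot occur: a factor-connected component of a nonempty factorizable graph has at least two vertices, and the two ends of any allowed edge of $G_0$ lie in different classes of $\gpart{G_0}$, so $\gpart{G_0}$ always has at least two classes. Second, your plan for the inclusion $V(G_0)\subseteq V(G)\setminus\bigcup_x C(G-x)$ (lifting factor-criticality of $G[X]/V(G_0)$ to alternating paths and splicing inside $G_0$) would amount to re-deriving the reachability statements that the paper already has in hand as Lemma~\ref{lem:reach} and Proposition~\ref{prop:nonpositive}; the paper instead packages them into Lemma~\ref{lem:combination} and compares directly against Proposition~\ref{prop:factorizable-ge}, obtaining Lemma~\ref{lem:ge2path}, from which both inclusions fall out with no ad hoc splicing or parity bookkeeping.
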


To show Theorem~\ref{thm:ge2cathedral}, we give some definitions and lemmas. 
Let $G$ be a factorizable graph, 
and let $H\in\mathcal{G}(G)$ and $S\in\pargpart{G}{H}$. 
Based on Theorem~\ref{thm:base}, 
we denote the set of all the strict upper bounds of $H$ 
``assigned'' to $S$ by $\parup{G}{S}$;  
that is to say, 
$H'\in \parup{G}{S}$ if and only if 
$H'\in\up{H}$ and 
there is a connected component $K$ of $G[\vup{H}]$ 
such that $V(H')\subseteq V(K)$ and $\Gamma_{G}(K)\cap V(H)\subseteq S$. 
We define 
$\vparup{G}{S} := \bigcup_{H'\in\parup{G}{S}} V(H')$ 
and $\vparupstar{G}{S} := \vparup{G}{S}\cup S$. 
We often omit the subscripts ``$G$'' if they are apparent 
from contexts.    
Note that $\up{H} = \bigcup_{S\in\pargpart{G}{H}} \up{S}$.  

Next, we present fundamental results on factorizable graphs.

\begin{proposition}[Kita~\cite{kita2012a, kita2012b}]\label{prop:nonpositive}
If $H$ is an elementary graph, 
then for any $u, v\in V(H)$ 
there is an $M$-saturated path between $u$ and $v$, 
or there is an $M$-balanced path from $u$ to $v$,  
where $M$ is an arbitrary perfect matching of $H$. 
\end{proposition}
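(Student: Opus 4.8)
The plan is to reduce the assertion to a statement purely about the Gallai--Edmonds partition, and then to exploit the hypothesis that $H$ is elementary through a short alternating-path argument.

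First I would dispose of the case $u=v$, where the trivial one-vertex path is $M$-balanced. For $u\neq v$, I would apply Proposition~\ref{prop:factorizable-ge} to the graph $H$, the matching $M$, and $x:=u$. Its three items partition $V(H)$ into $D(H-u)$, $A(H-u)\cup\{u\}$ and $C(H-u)$, and the conclusion we want for $v$ — that there is an $M$-saturated path between $u$ and $v$, or an $M$-balanced path from $u$ to $v$ — is exactly the statement that $v\notin C(H-u)$. Thus it suffices to prove $C(H-u)=\emptyset$ for every $u\in V(H)$.

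I would prove this by contradiction. Suppose $C:=C(H-u)\neq\emptyset$; since $u\notin C$ and $H$ is connected (being elementary), there is an edge $cw\in E(H)$ with $c\in C$ and $w\notin C$, and since $H$ is elementary this edge lies in some perfect matching $N$ of $H$. If $w=u$, then $N\setminus\{cw\}$ is a near-perfect, hence maximum, matching of $H-u$ that exposes $c$, so $c\in D(H-u)$, a contradiction. If $w\neq u$, let $u''$ be the $N$-partner of $u$; one checks $u''\notin\{c,w\}$, so $N':=N\setminus\{uu''\}$ is a maximum matching of $H-u$ exposing only $u''$ and containing the edge $cw$, and $w\neq u''$, so $w\in D(H-u)\cup A(H-u)$. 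If $w\in D(H-u)$, take an $N'$-balanced path $Q$ from $w$ to $u''$ (Proposition~\ref{prop:gallai-edmonds}); its edge at $w$ must be $cw$, so the subpath of $Q$ from $c$ to $u''$ is an $N'$-exposed path between $c$ and $u''$, whence $c\notin C(H-u)$ by Proposition~\ref{prop:gallai-edmonds} — a contradiction. If $w\in A(H-u)$, take an $N'$-exposed path $R$ between $w$ and $u''$ (Proposition~\ref{prop:gallai-edmonds}); since $w$ is left exposed by $N'\cap E(R)$ and $cw$ is the only $N'$-edge at $c$, the vertex $c$ cannot lie on $R$, so $R+cw$ is an $N'$-balanced path from $c$ to $u''$, whence $c\in D(H-u)$ — a contradiction.

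I expect the main obstacle to be the bookkeeping in the last step: one has to pass to the auxiliary matching $N$ provided by elementarity rather than stay with the given $M$, so that the edge $cw$ is genuinely a matching edge of $N'$; one has to decide whether $c$ lies on the alternating path under consideration; and one has to track the parities and the exposed endpoints of the truncated and extended paths in order to apply the characterizations in Proposition~\ref{prop:gallai-edmonds}. Once $C(H-u)=\emptyset$ is established, the reduction closes the proof, and nothing beyond Propositions~\ref{prop:factorizable-ge} and~\ref{prop:gallai-edmonds} is needed — in particular, neither ear decompositions nor the Gallai--Edmonds structure theorem.
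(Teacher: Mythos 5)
The paper states this proposition as a citation to the author's prior work~\cite{kita2012a, kita2012b} and gives no proof of it here, so there is no in-paper argument to compare against. Your proof must therefore be assessed on its own, and it is correct. The reduction is clean: by Proposition~\ref{prop:factorizable-ge} applied with $x:=u$, the desired disjunction for $v$ is equivalent to $v\notin C(H-u)$, so the whole claim is equivalent to $C(H-u)=\emptyset$ for every $u$ --- a matching-independent statement, which is why you are then free to pass from $M$ to the auxiliary matchings $N,N'$ supplied by elementarity (it is worth saying explicitly that $C(H-u)$ does not depend on the maximum matching chosen, since you silently rely on this when switching matchings). The contradiction argument checks out: elementarity gives a perfect matching $N$ through the boundary edge $cw$; the case $w=u$ immediately yields $c\in D(H-u)$; in the case $w\neq u$ the verifications $u''\notin\{c,w\}$, the parity bookkeeping showing $Q-w$ is $N'$-exposed, and the argument that $c\notin V(R)$ (because an interior vertex of an $N'$-exposed path is $N'$-matched inside the path, yet the only $N'$-edge at $c$ is $cw$ and $w$ is an exposed endpoint of $R$) are all sound, and the extended path $R+cw$ is indeed $N'$-balanced from $c$ to $u''$. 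Two small points you might spell out: that an elementary graph is connected (every vertex lies on an allowed edge, and the allowed subgraph has a single component, so it is a connected spanning subgraph), and that $u\notin C(H-u)$ is automatic since $C(H-u)\subseteq V(H-u)$. Your approach uses only Propositions~\ref{prop:gallai-edmonds} and~\ref{prop:factorizable-ge}, which the paper takes pains to establish independently of the Gallai--Edmonds structure theorem, so it is fully consistent with the paper's stated program of avoiding that theorem; it is also a genuinely self-contained derivation of a statement the paper itself leaves to an external reference.
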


\begin{lemma}[Kita~\cite{kita2012f, kita2013a}]\label{lem:reach}
Let $G$ be a factorizable graph, and 
$M$ be a perfect matching of $G$. 
Let $H\in\mathcal{G}(G)$, $S\in\pargpart{G}{H}$, 
and $T\in\pargpart{G}{H}\setminus \{S\}$. 
\renewcommand{\labelenumi}{\theenumi}
\renewcommand{\labelenumi}{{\rm \theenumi}}
\renewcommand{\theenumi}{(\roman{enumi})}
\begin{enumerate}
\item \label{item:up-path} 
For any $u\in \vupstar{S}$, 
there is an $M$-balanced path from $u$ to some vertex $v\in S$ 
whose vertices except $v$ are in $\vup{S}$. 

\item \label{item:path2up}
For any $u\in S$ and $v\in \vupstar{T}$, 
there is an $M$-saturated path between $u$ and $v$ 
whose vertices are all contained in $\vupstar{H}\setminus \vup{S}$. 

\item \label{item:nosaturate}
For any $u\in S$ and $v\in \vup{S}$, 
there are neither $M$-saturated paths between $u$ and $v$ 
nor $M$-balanced paths from $u$ to $v$. 

\item \label{item:nonpositive}
For any $u, v\in S$, 
there is no $M$-saturated path between $u$ and $v$, 
while there is an $M$-balanced path from $u$ to $v$. 
\end{enumerate}
\end{lemma}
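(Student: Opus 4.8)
The plan is to prove the four items in the order \ref{item:nonpositive}, \ref{item:up-path}, \ref{item:nosaturate}, \ref{item:path2up}, feeding each into the next; the engine is Theorem~\ref{thm:base} together with the finer structural fact from~\cite{kita2012a, kita2012b} that contracting the class $S$ in $G[\vparupstar{G}{S}]$ yields a factor-critical graph. Item~\ref{item:nonpositive} is immediate: for distinct $u,v\in S$ we have $u\gsim{G}v$, so Fact~\ref{fact:deletable2path} forbids an $M$-saturated path between them, while Proposition~\ref{prop:nonpositive} applied to the elementary graph $H$ (with the perfect matching $M\cap E(H)$, using that $V(H)$ is separating) produces an $M$-saturated or an $M$-balanced path between $u$ and $v$ inside $H$; the first being excluded, an $M$-balanced path from $u$ to $v$ remains.

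For item~\ref{item:up-path}, the trivial path handles $u\in S$. If $u\in\vup{S}$, let $K$ be the connected component of $G[\vup{H}]$ containing $u$, so $\Gamma_{G}(K)\cap V(H)\subseteq S$ by Theorem~\ref{thm:base}. No edge of $M$ joins two vertices of $S$ (such an edge would be a one-edge $M$-saturated path between $\gsim{G}$-equivalent vertices, against Fact~\ref{fact:deletable2path}) and no edge of $M$ leaves the separating set $\vup{S}$, so in $G[\vparupstar{G}{S}]/S$ the image of $M$ is a near-perfect matching missing the contracted vertex $s_0$. I will use the elementary fact that in a factor-critical graph with a near-perfect matching $N$ missing a vertex $z$, every vertex $y$ admits an $N$-balanced path from $y$ to $z$, obtained as the path-component through $z$ of $N\Delta N'$, where $N'$ is a perfect matching of the graph minus $y$. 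Applying this with $y=u$ and $z=s_0$ and then lifting the contraction gives the desired path: since $K$ is a connected component of $G[\vup{S}]$, the lifted path meets $V(H)$ only at its final vertex, which lies in $S$ by Theorem~\ref{thm:base}, while its remaining vertices lie in $V(K)\subseteq\vup{S}$; and it is $M$-balanced from $u$ to that final vertex.

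Item~\ref{item:nosaturate} first reduces its ``balanced'' half to its ``saturated'' half: if $Q$ were an $M$-balanced path from $u\in S$ to $v\in\vup{S}$ with $vv'\in M$, then one checks $v'\notin V(Q)$, whence $Q+vv'$ is an $M$-saturated path between $u$ and $v'$, and $v'\in\vup{S}$ because the component of $v$ in $G[\vup{H}]$ is separating. So it remains to forbid an $M$-saturated path $P$ between $u\in S$ and $v\in\vup{S}$; such a $P$ supplies a perfect matching $M\Delta E(P)$ of $G-u-v$, and the task is to contradict the assignment property of Theorem~\ref{thm:base}. This is the crux: the component $K\ni v$ attaches to $V(H)$ only along $S$, so a perfect matching of $G-u-v$ is forced to route the odd set $V(K)\setminus\{v\}$ partly through $S$, and propagating this through the nested separating cuts $\delta_{G}(V(K))$, $\delta_{G}(\vup{H})$, $\delta_{G}(\vparupstar{G}{H})$ should contradict the factor-criticality of $G[\vparupstar{G}{H}]/V(H)$. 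Making this step clean is the main obstacle, and I expect it needs either an inductive argument over $(\mathcal{G}(G),\yield)$ or a careful count of how $M\Delta E(P)$ crosses those cuts.

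Finally, item~\ref{item:path2up}. For $v\in T$ the statement concerns $H$ assisted only by the non-$S$ upper bounds: $u\not\gsim{G}v$ gives a perfect matching of $G-u-v$, and the structure theory lets one take it to agree with $M$ outside $\vparupstar{G}{H}\setminus\vup{S}$, so that the $u$--$v$ path in its symmetric difference with $M$ is $M$-saturated and avoids $\vup{S}$. For $v\in\vup{T}$ one prepends an $M$-balanced path from $v$ to $T$, furnished by item~\ref{item:up-path} applied to $T$, whose non-endpoint vertices lie in $\vup{T}\subseteq\vparupstar{G}{H}\setminus\vup{S}$, splicing at the $M$-edge at the $T$-end and shortcutting any vertex met twice. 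Here too the real work is the localization---confining all these alternating paths to $\vparupstar{G}{H}\setminus\vup{S}$---which does not follow merely from the sets being separating (non-matching edges can leave them) and relies on Theorem~\ref{thm:base} and the finer structure, exactly as in item~\ref{item:nosaturate}.
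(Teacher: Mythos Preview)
Your treatment of \ref{item:nonpositive} matches the paper's own proof exactly: Fact~\ref{fact:deletable2path} plus Proposition~\ref{prop:nonpositive}. That is in fact the \emph{only} part the paper proves here; for \ref{item:up-path}, \ref{item:path2up}, and \ref{item:nosaturate} the paper simply cites \cite{kita2012f, kita2013a} and gives no argument. So on the portion where a comparison is possible, you agree with the paper.

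Everything else you wrote is an attempt to go beyond what this paper does. Your sketch of \ref{item:up-path} via factor-criticality of $G[\vparupstar{G}{S}]/S$ and Property~\ref{prop:path2root} is the right idea and essentially how it is done in the cited sources; one small slip is the claim that the lifted path has its non-final vertices in $V(K)$---they lie in $\vup{S}$, not necessarily in the single component $K$, but that is already what \ref{item:up-path} asserts, so the conclusion survives. For \ref{item:nosaturate} and \ref{item:path2up} you yourself flag the gaps: the localization of the alternating paths to $\vparupstar{G}{H}\setminus\vup{S}$ and the parity/cut argument for \ref{item:nosaturate} are genuinely the nontrivial content, and your outline does not yet supply them. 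The cited papers handle \ref{item:nosaturate} by an inductive structural argument rather than a direct cut count, and \ref{item:path2up} by working inside $G[\vparupstar{G}{H}]$ and using the factor-critical contraction; your instinct that Theorem~\ref{thm:base} alone is insufficient is correct.
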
 
\begin{proof}
The statements \ref{item:up-path}, \ref{item:path2up}, and \ref{item:nosaturate} 
are stated in \cite{kita2012f, kita2013a}. 
The statement \ref{item:nonpositive} 
is immediately obtained by combining Fact~\ref{fact:deletable2path}  
and Proposition~\ref{prop:nonpositive}. 
\qed\end{proof}
By Proposition~\ref{prop:nonpositive} and Lemma~\ref{lem:reach},  
the next lemma follows. 
\begin{lemma}\label{prop:combination}\label{lem:combination}
Let $G$ be a factorizable graph, 
and $M$ be a perfect matching of $G$. 
Let $H\in \mathcal{G}(G)$  and $S\in \pargpart{G}{H}$. 
Then, the following hold:  
\begin{enumerate}
\renewcommand{\labelenumi}{\theenumi}
\renewcommand{\labelenumi}{{\rm \theenumi}}
\renewcommand{\theenumi}{(\roman{enumi})}
\item \label{item:up2up}
For any $u\in \vup{S}$ and $v\in \vupstar{H}\setminus \vupstar{S}$, 
there is an $M$-saturated path between $u$ and $v$. 
\item \label{item:up2root}
For any $u\in \vup{S}$ and $v\in S$, 
there is no $M$-saturated path between $u$ and $v$;  
however,  there is an $M$-balanced path from $u$ to $v$. 
\item \label{item:root2up}
For any $w\in S$ and $v\in \vupstar{H}\setminus \vupstar{S}$, 
there is an $M$-saturated path between $w$ and $v$. 
\item \label{item:root2root}
For any $w, v\in S$, 
there is no $M$-saturated path between $w$ and $v$;  
however,  there is an $M$-balanced path from $w$ and $v$. 
\item \label{item:inverse}
For any $w\in S$ and $v\in \vup{S}$, 
there is neither an $M$-saturated path between $w$ and $v$ 
nor an $M$-balanced path from $w$ to $v$. 
\end{enumerate}
\end{lemma}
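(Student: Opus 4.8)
The plan is to derive all five parts of Lemma~\ref{lem:combination} directly from Lemma~\ref{lem:reach} and Proposition~\ref{prop:nonpositive}, handling the ``saturated-path'' half and the ``balanced-path'' half of each statement separately. For the existence of $M$-saturated paths (the positive parts of \ref{item:up2up}, \ref{item:root2up}), the idea is to find, for given $u$ and $v$, a vertex $r$ (or two vertices $r, r'$) of $S$ so that $u$ reaches $r$ by an $M$-balanced path lying in $\vup{S}$, and $v$ reaches $r$ (or $r'$) by an $M$-saturated path lying in $\vupstar{H}\setminus\vup{S}$, and then splice these together. Concretely: by Lemma~\ref{lem:reach}\ref{item:up-path}, $u\in\vup{S}$ has an $M$-balanced path $P_1$ to some $r\in S$ with $V(P_1)\setminus\{r\}\subseteq\vup{S}$; and $v\in\vupstar{H}\setminus\vupstar{S}$ lies in $\vupstar{T}$ for some $T\in\pargpart{G}{H}\setminus\{S\}$, so by \ref{item:path2up} there is an $M$-saturated path $P_2$ between $r$ and $v$ with $V(P_2)\subseteq\vupstar{H}\setminus\vup{S}$. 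Since $P_1$ and $P_2$ meet only in $r$ (their vertex sets are separated by the membership ``in $\vup{S}$'' versus ``not in $\vup{S}$'', except for $r$ itself), the walk $P_1\cup P_2$ is a genuine path; checking the alternation pattern at $r$ — $P_1$ enters $r$ with a non-matching edge (it is $M$-balanced exposing $r$) while $P_2$ meets $r$ with a matching edge — shows $P_1\cup P_2$ is $M$-saturated between $u$ and $v$. Part \ref{item:root2up} is the same argument with $P_1$ trivial ($u=r\in S$ already), using \ref{item:path2up} directly.

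For the negative statements \ref{item:up2root}, \ref{item:root2root}, \ref{item:inverse} (nonexistence of $M$-saturated paths) together with the positive ``$M$-balanced path'' claims in \ref{item:up2root} and \ref{item:root2root}, I would invoke Lemma~\ref{lem:reach} almost verbatim: \ref{item:up2root} is \ref{item:nosaturate} (for the nonexistence of saturated paths between $u\in\vup{S}$ and $v\in S$) combined with \ref{item:up-path} (which yields an $M$-balanced path from $u$ to \emph{some} vertex of $S$) — here the one subtlety is that \ref{item:up-path} produces a balanced path to some $r\in S$, not to the prescribed $v\in S$, so I must upgrade it: appending to that path an $M$-balanced path from $r$ to $v$ inside $H$ (which exists by \ref{item:nonpositive}, i.e.\ by Proposition~\ref{prop:nonpositive} applied to the elementary graph $H$, since $r,v\in S$ forces $r\gsim{G}v$ hence no $M$-saturated path, hence an $M$-balanced one) gives the desired balanced path from $u$ to $v$; one checks the concatenation is again $M$-balanced. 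Statement \ref{item:root2root} is exactly Lemma~\ref{lem:reach}\ref{item:nonpositive} with $u,v$ renamed. Statement \ref{item:inverse} is exactly Lemma~\ref{lem:reach}\ref{item:nosaturate} (after noting $\vup{S}$ and $S$ are disjoint, and swapping the roles of the two endpoints, which is harmless since ``$M$-saturated path between'' and ``$M$-balanced path from $\cdot$ to $\cdot$'' behave symmetrically enough for the nonexistence claim).

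The remaining positive claim, the existence of an $M$-saturated path in \ref{item:up2up} when $v$ also lies above $S$ but in a different sub-branch, is the case already covered above via \ref{item:path2up}; the only real care needed is the disjointness/splicing bookkeeping, namely verifying that the two alternating paths being concatenated share exactly the single vertex $r$ and that the matching/non-matching edges line up correctly at that junction. That vertex-disjointness is forced by the containment statements built into Lemma~\ref{lem:reach} (``whose vertices except $v$ are in $\vup{S}$'' versus ``whose vertices are all contained in $\vupstar{H}\setminus\vup{S}$''), so it is routine but must be written out.

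I expect the main obstacle to be precisely this splicing step: ensuring that the concatenation of an $M$-balanced path and an $M$-saturated (or another $M$-balanced) path at their common endpoint is again of the claimed type. This requires tracking, at the shared vertex $r$, that one path uses a matching edge at $r$ and the other a non-matching edge — a balanced path from $u$ to $r$ exposes $r$ (so its last edge at $r$ is not in $M$), while a saturated path through $r$ covers $r$ by a matching edge — so the edges are distinct and the alternation is preserved; and that the interiors are vertex-disjoint so no shortcut/contradiction arises. Everything else reduces to quoting Lemma~\ref{lem:reach} and Proposition~\ref{prop:nonpositive} (equivalently Fact~\ref{fact:deletable2path}) essentially directly.
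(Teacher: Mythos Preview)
Your proposal is correct and follows essentially the same route as the paper: parts \ref{item:root2up}, \ref{item:root2root}, \ref{item:inverse} are read off directly from Lemma~\ref{lem:reach}, part \ref{item:up2up} is obtained by concatenating the paths from \ref{item:up-path} and \ref{item:path2up} at their common vertex in $S$, and part \ref{item:up2root} combines \ref{item:nosaturate} with \ref{item:up-path}. You are in fact more careful than the paper on \ref{item:up2root}: you correctly flag that \ref{item:up-path} only reaches \emph{some} vertex of $S$, and that to reach a prescribed $v\in S$ one must append an $M$-balanced path from $r$ to $v$ inside $H$ (via Proposition~\ref{prop:nonpositive}) --- the paper simply calls the second half of \ref{item:up2root} a ``restatement'' of \ref{item:up-path} and glosses over this step.
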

\begin{proof}
The statements \ref{item:root2up}, \ref{item:root2root}, and \ref{item:inverse} 
are immediate from  \ref{item:path2up}, 
\ref{item:nonpositive}, and \ref{item:nosaturate} of Lemma~\ref{lem:reach}, respectively. 

For \ref{item:up2up}, 
let $P_1$ be an $M$-balanced path from $u$ to 
some vertex $x\in S$ such that $V(P_1)\setminus \{x\} \subseteq \vup{S}$, 
given by \ref{item:up-path} of Lemma~\ref{lem:reach}. 
By \ref{item:path2up} of Lemma~\ref{lem:reach}, 
there is an $M$-saturated path $P_2$ between $x$ and $v$ 
such that $V(P_2)\subseteq \vupstar{H}\setminus \vup{S}$. 
Hence, 
the path obtained by adding $P_1$ and $P_2$ 
 forms an $M$-saturated path between $u$ and $v$, 
and \ref{item:up2up} follows. 

The first and the latter halves of \ref{item:up2root} 
are restatements of 
\ref{item:nosaturate} and \ref{item:up-path} of Lemma~\ref{lem:reach}, respectively. 
%
\qed\end{proof}
By comparing Proposition~\ref{prop:factorizable-ge} and Lemma~\ref{lem:combination}, 
the next lemma follows.  
\begin{lemma}\label{prop:ge2path}\label{lem:ge2path}
Let $G$ be a factorizable graph 
such that the poset $(\mathcal{G}(G), \yield )$ has the 
minimum element $G_0$. 
Let $S\in \pargpart{G}{G_0}$. 
\renewcommand{\labelenumi}{\theenumi}
\renewcommand{\labelenumi}{{\rm \theenumi}}
\renewcommand{\theenumi}{(\roman{enumi})}
\begin{enumerate}
\item \label{item:supset}
If $x\in \vup{S}$, then   
$D(G-x) \supseteq \vupstar{G_0}\setminus \vupstar{S}$, 
$A(G-x)\cup\{x\} \supseteq  S$, 
and $C(G-x) \subseteq \vup{S}$. 
\item \label{item:identical} 
If $x\in S$, then 
$ D(G-x) = \vupstar{G_0}\setminus \vupstar{S}$, 
$A(G-x)\cup\{x\} = S$, 
and $C(G-x) = \vup{S}$. 
\end{enumerate}
\end{lemma}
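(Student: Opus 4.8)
The strategy is to translate the path-existence statements of Lemma~\ref{lem:combination} directly into membership statements for the Gallai--Edmonds partition of $G-x$ via Proposition~\ref{prop:factorizable-ge}. Fix a perfect matching $M$ of $G$; since $G_0$ is the minimum of $(\mathcal{G}(G),\yield)$, we have $\up{G_0} = \mathcal{G}(G)$, hence $\vup{G_0} = V(G)$ and, for each $S\in\pargpart{G}{G_0}$, the decomposition $\vupstar{G_0} = V(G)$ splits as the disjoint union $(\vupstar{G_0}\setminus\vupstar{S}) \,\dot\cup\, S \,\dot\cup\, \vup{S}$. (I would first record this disjointness explicitly: $S$ and $\vup{S}$ are disjoint by definition of $\parup{G}{S}$, and together with $\vupstar{G_0}\setminus\vupstar{S}$ they partition $V(G_0)$; one also needs $\vup{S}$ together with $S$ to exhaust all the strict upper bounds, i.e.\ $\up{G_0}\setminus\{G_0\} = \bigcup_{S\in\pargpart{G}{G_0}}\up{S}$, which is the parenthetical remark ``$\up{H} = \bigcup_{S}\up{S}$'' applied to $H = G_0$.)

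For part~\ref{item:identical}, take $x\in S$. Apply Proposition~\ref{prop:factorizable-ge} with this $x$: a vertex $u$ lies in $D(G-x)$ iff there is an $M$-saturated path between $x$ and $u$; in $A(G-x)\cup\{x\}$ iff there is no such path but there is an $M$-balanced path from $x$ to $u$; in $C(G-x)$ iff neither exists. Now read off Lemma~\ref{lem:combination} with the root class $S$ and this same $x\in S$: item~\ref{item:root2up} gives an $M$-saturated path from $x$ to every $v\in\vupstar{G_0}\setminus\vupstar{S}$ (here $\vupstar{H}=\vupstar{G_0}=V(G)$), so that whole set lies in $D(G-x)$; item~\ref{item:root2root} gives, for every other $v\in S$, no saturated path but a balanced path, so $S\setminus\{x\}\subseteq A(G-x)$, hence $S\subseteq A(G-x)\cup\{x\}$; item~\ref{item:inverse} gives, for every $v\in\vup{S}$, neither kind of path, so $\vup{S}\subseteq C(G-x)$. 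Since these three sets partition $V(G)\setminus\{x\} = V(G-x) = D(G-x)\dot\cup A(G-x)\dot\cup C(G-x)$, the three inclusions must be equalities; this forces part~\ref{item:identical}.

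For part~\ref{item:supset}, take $x\in\vup{S}$, say $x$ lies in a member of $\parup{G}{S}$. Again apply Proposition~\ref{prop:factorizable-ge} at $x$. By Lemma~\ref{lem:combination}\ref{item:up2up}, every $v\in\vupstar{H}\setminus\vupstar{S} = \vupstar{G_0}\setminus\vupstar{S}$ is joined to $x$ by an $M$-saturated path, so $\vupstar{G_0}\setminus\vupstar{S}\subseteq D(G-x)$; by \ref{item:up2root}, every $v\in S$ has no saturated path to $x$ but a balanced one, so $S\subseteq A(G-x)\cup\{x\}$; and $C(G-x)\subseteq\vup{S}$ then follows because $C(G-x) = V(G-x)\setminus(D(G-x)\cup A(G-x))$ is contained in the complement of $(\vupstar{G_0}\setminus\vupstar{S})\cup S = \vupstar{G_0}\setminus\vup{S}$ within $V(G)\setminus\{x\}$, which is exactly $\vup{S}\setminus\{x\}\subseteq\vup{S}$. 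This gives all three statements of part~\ref{item:supset}; note that here the inclusions need not be equalities because $x$ being strictly above $S$ does not pin down $D(G-x)$ within $\vup{S}$ — vertices of $\vup{S}$ may fall into any of the three classes depending on how far above they sit.

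The only genuine subtlety — and the step I would be most careful about — is the bookkeeping that $(\vupstar{G_0}\setminus\vupstar{S})$, $S$, and $\vup{S}$ really do partition $V(G)\setminus\{x\}$ once $x$ is removed (in part~\ref{item:identical}, $x\in S$; in part~\ref{item:supset}, $x\in\vup{S}$), so that the three inclusions coming from Proposition~\ref{prop:factorizable-ge} and Lemma~\ref{lem:combination} can be upgraded to equalities in the first case. Everything else is a direct, almost mechanical, matching of the five clauses of Lemma~\ref{lem:combination} against the three clauses of Proposition~\ref{prop:factorizable-ge}; no new combinatorial input is needed beyond the fact that $G_0$ being the minimum makes $\vup{G_0}=V(G)$ so that ``$\vupstar{H}$'' in Lemma~\ref{lem:combination} is the whole vertex set.
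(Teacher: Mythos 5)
Your proposal is correct and follows essentially the same route as the paper's own proof: both read off the reachability clauses of Lemma~\ref{lem:combination} against the Gallai--Edmonds characterization of Proposition~\ref{prop:factorizable-ge}, and then use the fact that $(\vupstar{G_0}\setminus\vupstar{S})\,\dot\cup\,S\,\dot\cup\,\vup{S}$ and $D(G-x)\,\dot\cup\,(A(G-x)\cup\{x\})\,\dot\cup\,C(G-x)$ both partition $V(G)$ to convert inclusions into the final containments and equalities. The paper states this more tersely (and leaves part~\ref{item:identical} as a ``similar argument''); your write-up supplies the same bookkeeping explicitly, including the observation that minimality of $G_0$ makes $\vupstar{G_0}=V(G)$, which is the clarification most worth keeping.
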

\begin{proof}
The claims are all obtained by comparing 
the reachabilities of alternating paths 
regarding Proposition~\ref{prop:factorizable-ge} 
and Lemma~\ref{prop:combination}.  
Let $x\in \vup{S}$. 
By Proposition~\ref{prop:factorizable-ge} \ref{item:f-d} 
and Lemma~\ref{prop:combination} \ref{item:up2up}, 
we have $ D(G-x) \supseteq \vupstar{G_0}\setminus \vupstar{S}$. 
It also follows that $A(G-x)\cup\{x\} \supseteq  S$
by a similar argument, comparing Proposition~\ref{prop:factorizable-ge} \ref{item:f-a} 
and Lemma~\ref{prop:combination} \ref{item:up2root}. 
Therefore, since $V(G) = D(G)\dot{\cup} A(G) \dot{\cup} C(G) 
= ( \vupstar{G_0}\setminus \vupstar{S})  \dot{\cup} S \dot{\cup} \vup{S}$, 
we have $C(G-x) \subseteq \vup{S}$, and we are done for \ref{item:supset}. 
The statement \ref{item:identical} also follows by similar arguments 
with Proposition~\ref{prop:factorizable-ge} and 
Lemma~\ref{prop:combination} \ref{item:root2up} \ref{item:root2root} \ref{item:inverse}.   
%
%
\qed\end{proof} 
Now we can prove Theorem~\ref{thm:ge2cathedral} using Lemma~\ref{lem:ge2path}. 
\begin{proof}[Theorem~\ref{thm:ge2cathedral}]
\begin{cclaim}\label{claim:disjoint}
For any $x\in V(G)$, $V(G_0)\cap C(G-x) = \emptyset$. 
\end{cclaim}
\begin{proof}
Let $u\in V(G_0)$ 
and let $S\in\pargpart{G}{G_0}$ be such that $u\in S$. 
By Lemma~\ref{prop:ge2path}, if $x\in \vupstar{S}$ 
then $u\in A(G-x)$, 
and if $x\in \vupstar{G_0}\setminus \vupstar{S}$  
then $u\in D(G-x)$. 
Thus, anyway we have $u\not\in C(G-x)$,  and  the claim follows. 
\qed\end{proof}
\begin{cclaim}\label{claim:notdisjoint}
For any $u\in V(G)\setminus V(G_0)$, 
there exists $x\in V(G)$ such that $u\in C(G-x)$. 
\end{cclaim}
\begin{proof}
Let $u\in V(G)\setminus V(G_0)$ and 
let $S\in \pargpart{G}{G_0}$ be such that $u\in \vup{S}$. 
Then, for any $x\in S$, 
we have $u\in C(G-x)$ by Lemma~\ref{prop:ge2path}. 
Thus, we have the claim. 
\qed\end{proof}
By Claims~\ref{claim:disjoint} and \ref{claim:notdisjoint}, 
we obtain the theorem. 
%
%
\qed\end{proof}
As we mentioned in the outline given in Section~\ref{sec:outline}, 
we will obtain in Section~\ref{sec:proof} that if a graph is saturated then the poset by $\yield$ has the minimum element.  
Thus,  the above theorem, Theorem~\ref{thm:ge2cathedral}, 
will turn out to be regarded as a generalized version of the part of the cathedral theorem 
related to the Gallai-Edmonds partition.

\section{Another Proof of the Cathedral Theorem}\label{sec:proof}

\subsection{The Cathedral Theorem}

The {\em cathedral theorem} is a structure theorem of saturated graphs, 
originally given by Lov\'asz~\cite{lovasz1972b, lp1986}, 
and later Szigeti gave another proof~\cite{szigeti1993, szigeti2001}. 
In this section,  we give yet another proof  
 as a consequence of the structures given in Section~\ref{sec:canonical}. 
 For convenience, we treat empty graphs 
 as factorizable and saturated.

\begin{definition}[The Cathedral Construction]\label{def:cathedralconstruction} 
Let $G_0$ be a saturated elementary graph 
and let $\{G_S\}_{S\in\gpart{G_0}}$ be a family of saturated graphs, 
some of which might be empty. 
For each $S\in\mathcal{P}(G_0)$, 
join every vertex in $S$ and every vertex of $G_S$.
We call this operation the \textit{cathedral construction}. 
Here $G_0$ and $\{G_S\}_{S\in\gpart{G_0}}$ 
are respectively called  
the \textit{foundation} and the family of \textit{towers}. 
\if0
Let $G_0$ be a saturated elementary graph.
For each class $S\in\mathcal{P}(G_0)$, assign an saturated graph $G_S$, which might be empty,
and join every vertex in $S$ and every vertex of $G_S$.
We call this operation \textit{cathedral construction}.
\fi
\end{definition}
%

%
\begin{figure}
\centering
\includegraphics[width=7cm]{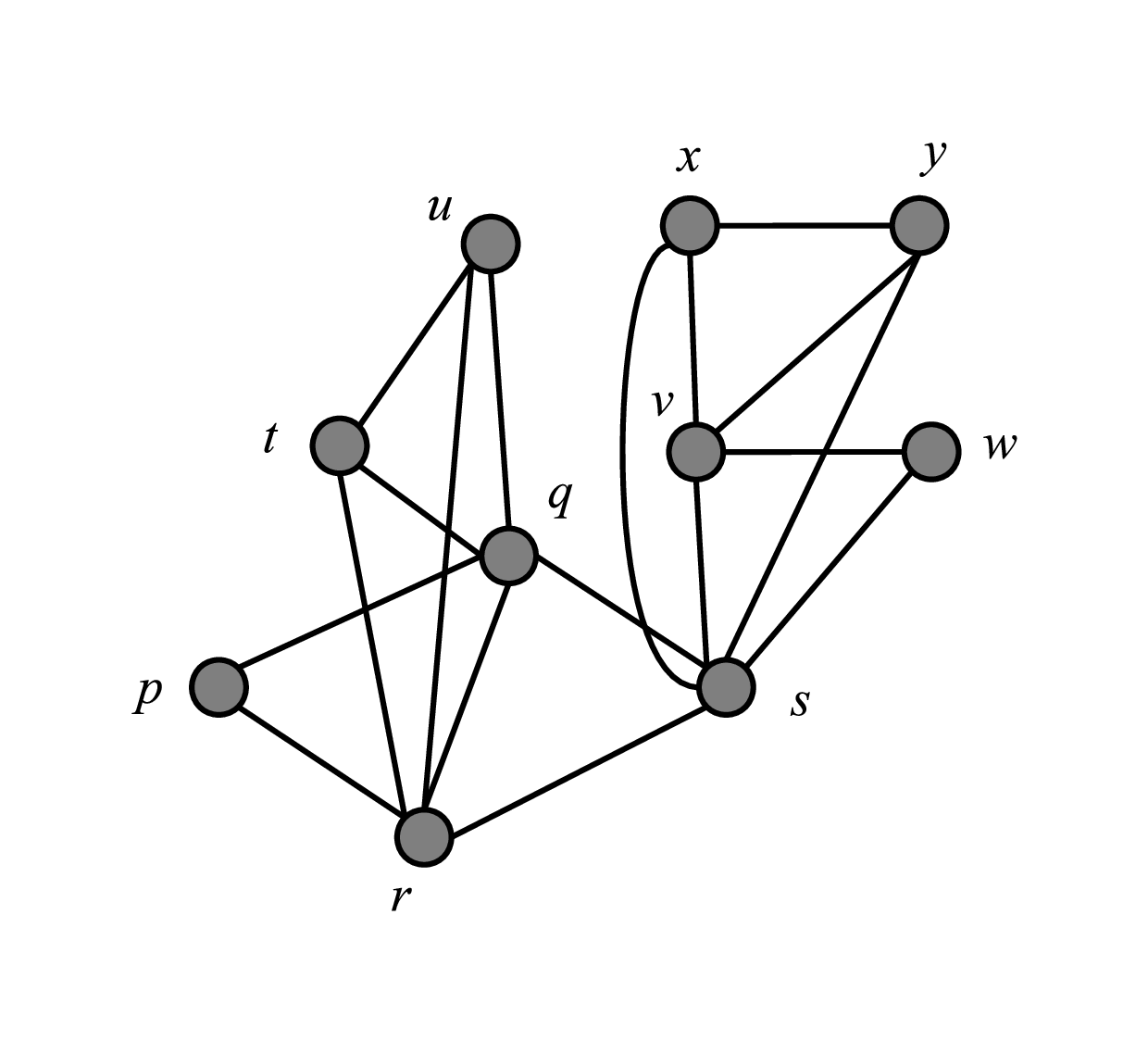}
\caption{A saturated graph $\tilde{G}$}
\label{fig:saturated_verticesnumbered}
\end{figure}

\begin{figure}
\begin{minipage}{0.58\hsize}
\begin{center}
\includegraphics[width=7cm]{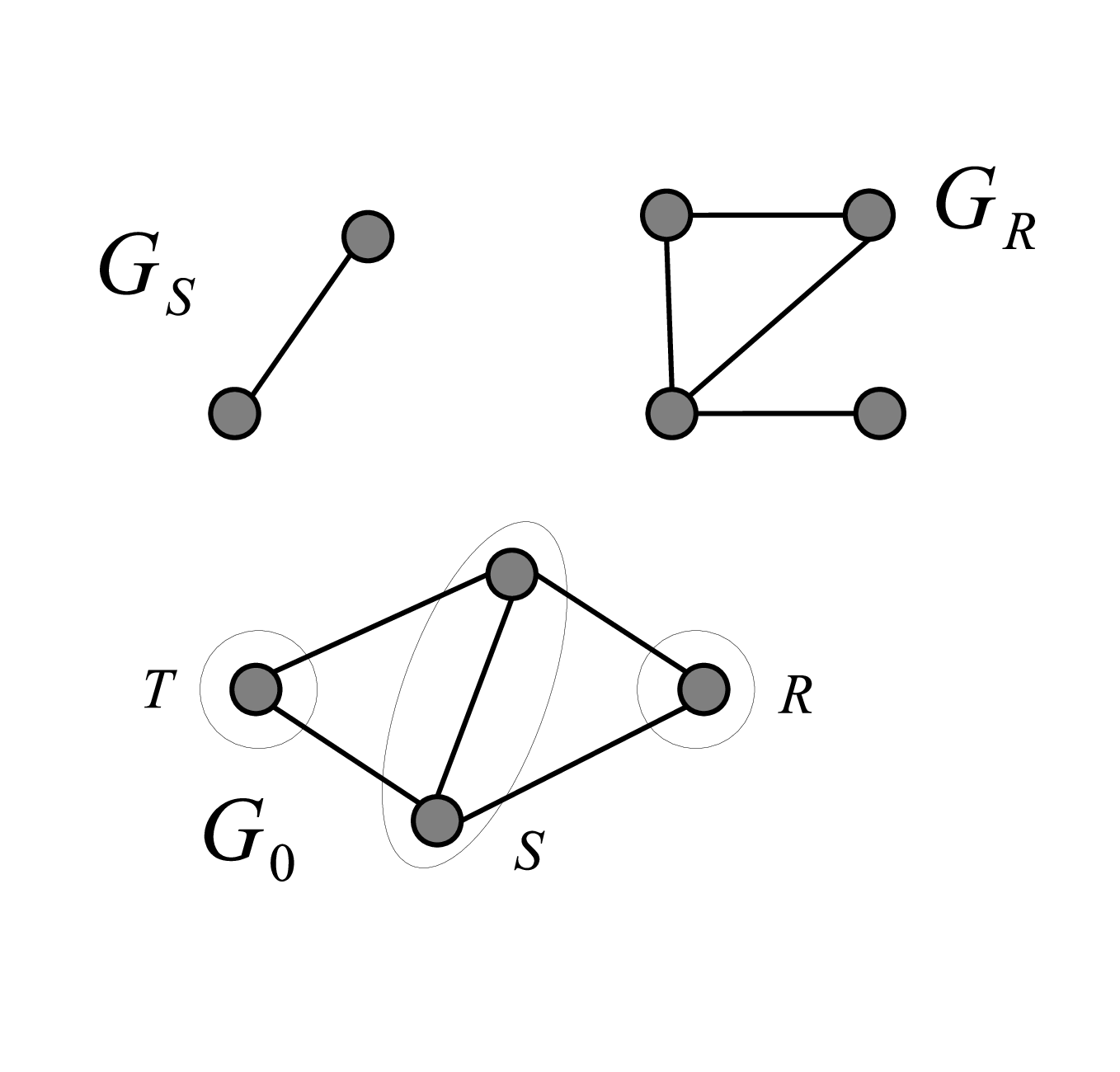}
\end{center}
\caption{The foundation and the towers that create $\tilde{G}$}
\label{fig:saturated_parts}
\end{minipage}
\begin{minipage}{0.39\hsize}
\begin{center}
\includegraphics[width=5cm]{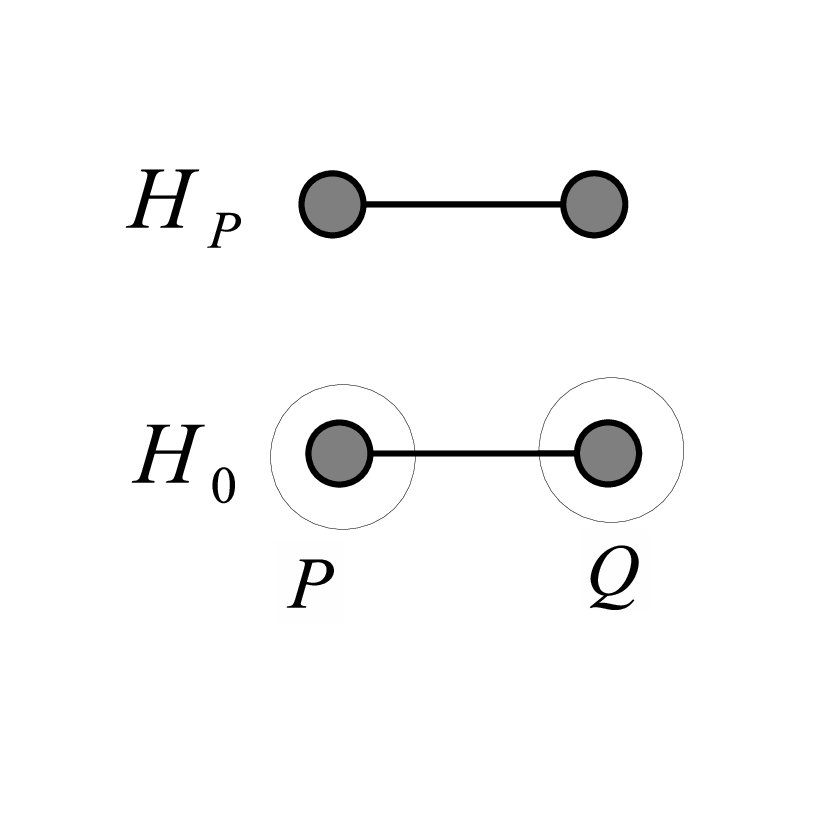}
\end{center}
\caption{The foundation and the towers that create $G_R$}
\label{fig:saturated_subparts}
\end{minipage}
\end{figure}

\begin{figure}
\begin{minipage}{0.59\hsize}
\includegraphics[width=7cm]{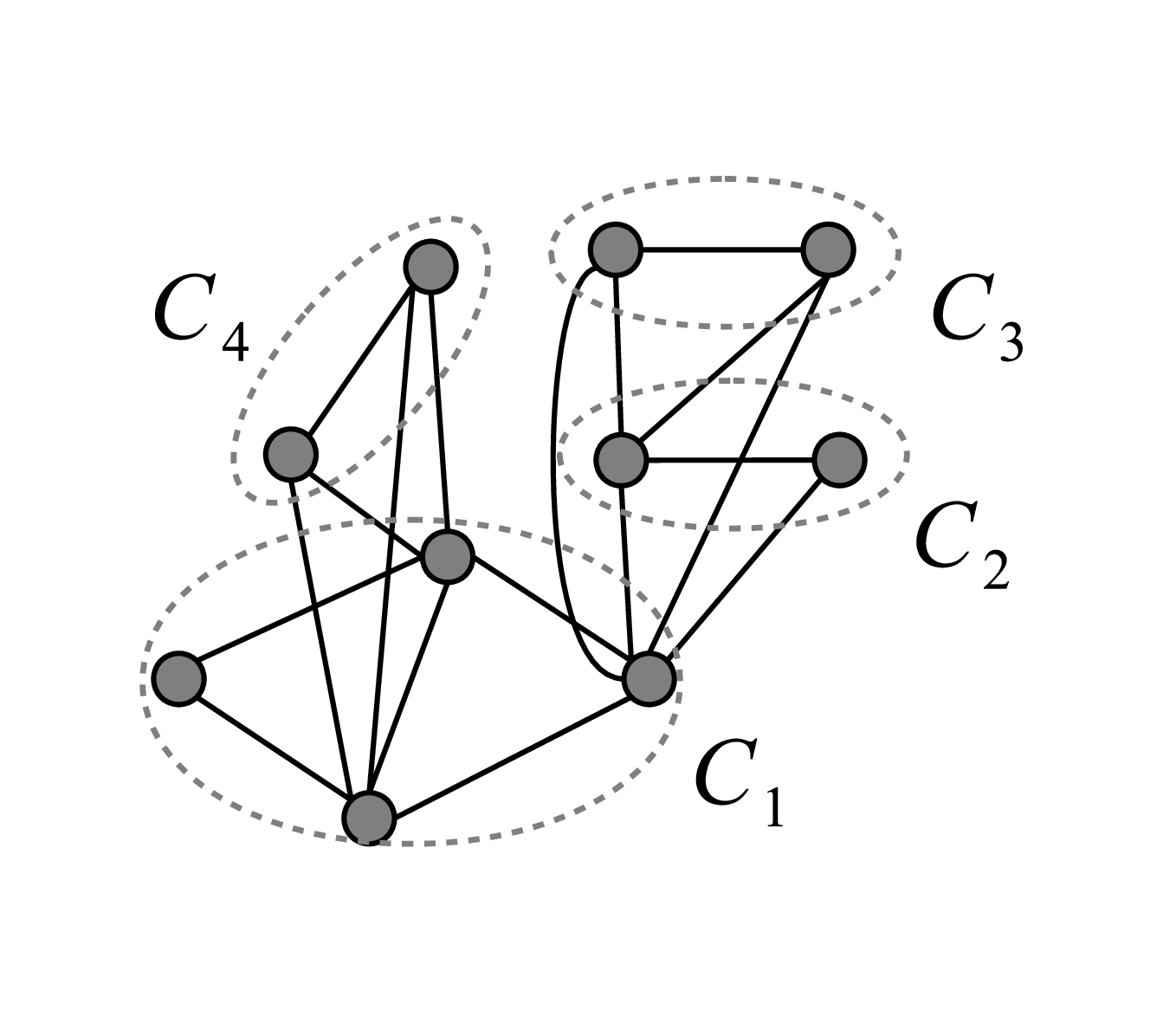}
\caption{The factor-connected components of $\tilde{G}$}
\label{fig:saturated_component}
\end{minipage}
\begin{minipage}{0.39\hsize}
\includegraphics[width=4cm]{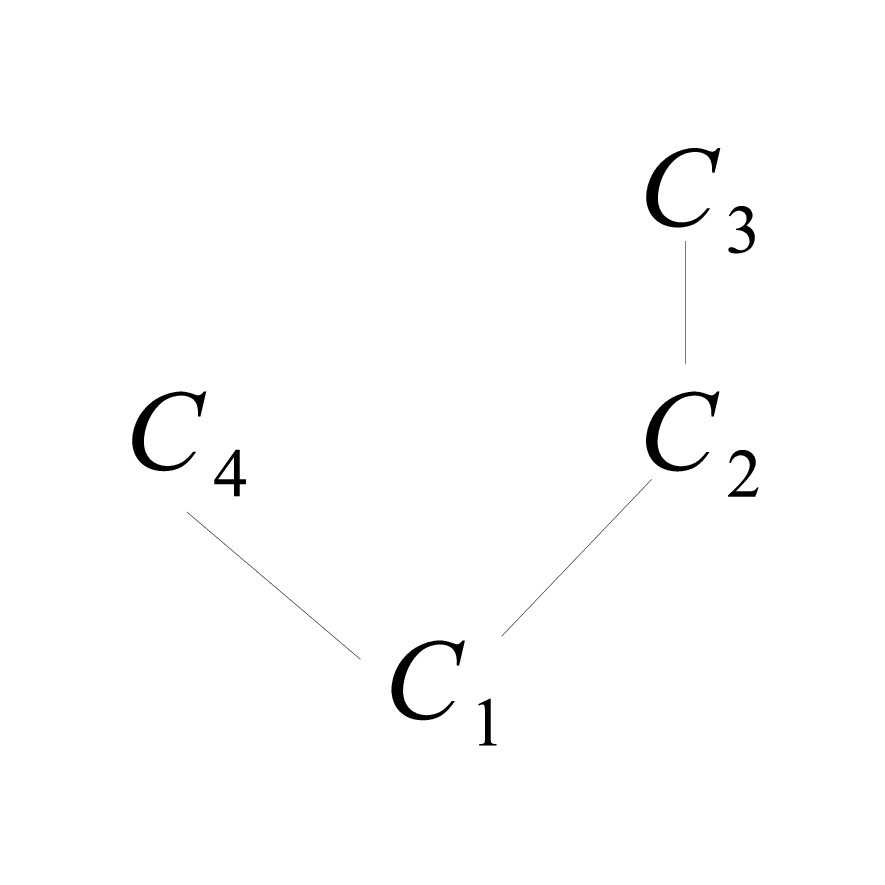}
\caption{The Hasse diagram of $(\mathcal{G}(\tilde{G}), \yield)$}
\label{fig:saturated_hasse}
\end{minipage}
\end{figure}

\begin{figure}
\begin{center}
\includegraphics[width=7cm]{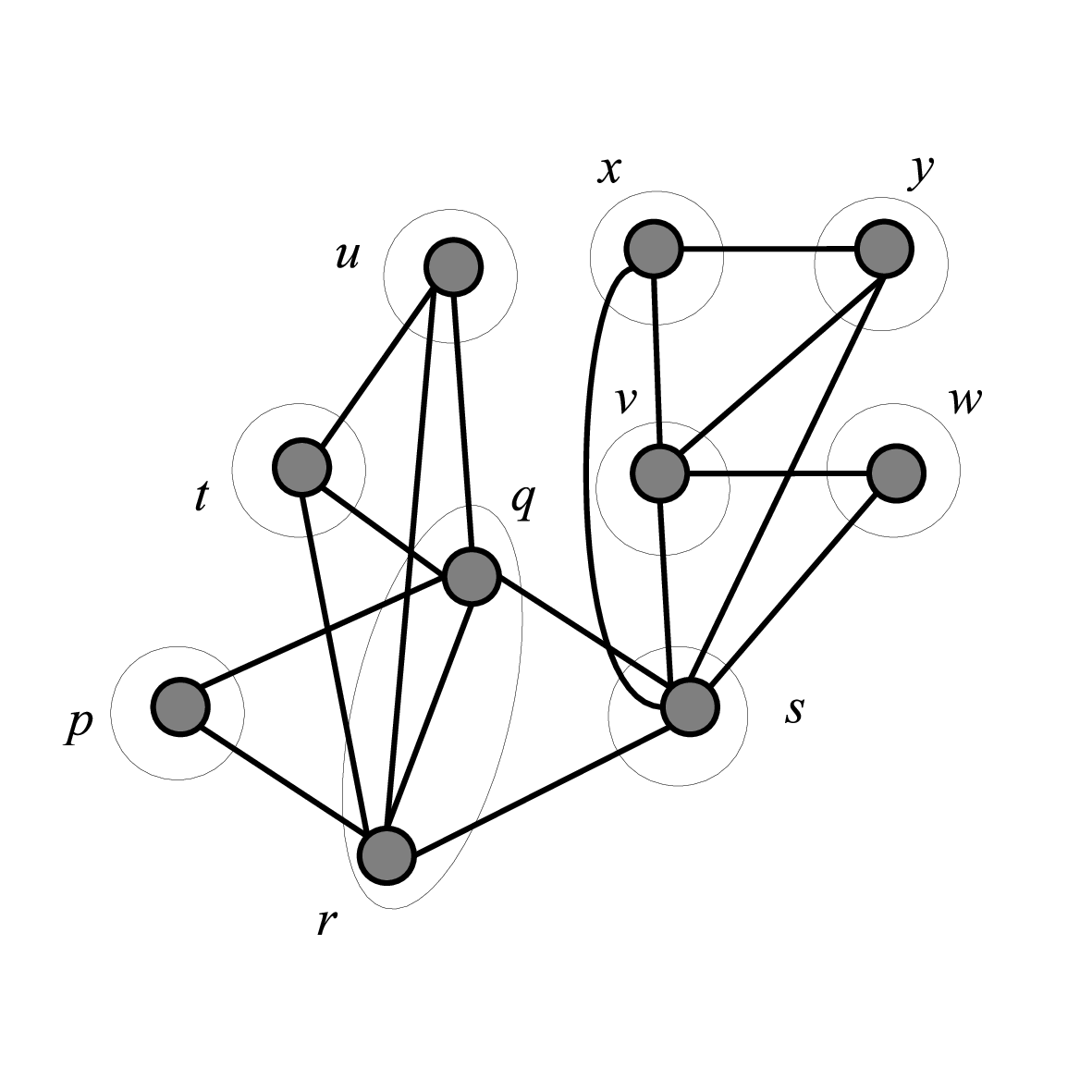}
\caption{The generalized canonical partition of $\tilde{G}$}
\label{fig:saturated_partition}
\end{center}
\end{figure}

%

%
Figures~\ref{fig:saturated_verticesnumbered},  
\ref{fig:saturated_parts}, 
\ref{fig:saturated_subparts}
 show examples of the cathedral construction. 
In Figure~\ref{fig:saturated_parts}, 
the graph $G_0$ is an elementary saturated graph with 
the canonical partition $\gpart{G_0} = \{S, T, R\}$, 
and the graphs $G_S$, $G_T$, $G_R$ are saturated graphs 
such that $G_S$ and $G_R$ are respectively 
elementary and non-elementary while  
$G_T$ is an empty graph.    
If we conduct the cathedral construction 
with the foundation $G_0$ and the family of towers 
$\mathcal{T} = \{G_S, G_T, G_R\}$, 
we obtain the saturated graph $\tilde{G}$ in Figure~\ref{fig:saturated_verticesnumbered}. 
Moreover, 
Figure~\ref{fig:saturated_subparts} shows that 
if we conduct the cathedral construction with  
the foundation $H_0$ with $\gpart{H_0} = \{ P, Q\}$
 and the family of towers $\{H_P, H_Q\}$, where   
$H_P$ is an elementary saturated graph and  $H_Q$ is an empty graph, 
then we obtain the saturated graph $G_R$.  
(Therefore, in other words, the graph $\tilde{G}$ is constructed 
by a repetition of  the cathedral construction 
using the elementary saturated graphs $H_0$, $H_P$, $G_0$, and $G_S$ 
as fundamental building blocks.)

%
\begin{theorem}[The Cathedral Theorem~\cite{lovasz1972b, lp1986}]\label{thm:cathedral} 
A factorizable graph 
$G$ is saturated if and only if 
it is constructed from smaller saturated graphs by the cathedral construction. 
In other words, 
if a factorizable graph $G$ is saturated, 
then  
there is a subgraph $G_0$ and a family of subgraphs $\mathcal{T}$ of $G$ 
which are well-defined as a foundation and a family of towers,  
and $G$ is the graph constructed  from $G_0$ and $\mathcal{T}$ by the cathedral construction; 
conversely, 
if $G$ is a graph obtained from a foundation and towers by  the cathedral construction,  
then $G$ is saturated.   
%

Additionally, if $G$ is a saturated graph 
obtained  from  
a foundation  $G_0$ and a family of towers $\mathcal{T} = \{G_S\}_{S\in\gpart{G_0}}$ 
by the cathedral construction, then,
\begin{enumerate} 
\renewcommand{\labelenumi}{\theenumi}
\renewcommand{\labelenumi}{{\rm \theenumi}}
\renewcommand{\theenumi}{(\roman{enumi})}
\item \label{item:allow} 
$e\in E(G)$ is allowed if and only if it is an allowed edge of $G_0$ or $G_S$ 
for some $S\in \gpart{G_0}$,  
\item \label{item:unique}
such $G_0$ uniquely exists;  
 that is, if $G$ can be obtained  from a foundation $G'_0$ and  a family of towers $\mathcal{T}'$ by the cathedral construction, 
then $V(G_0) = V(G'_0)$ holds, and  
\item \label{item:c_sat} $V(G_0)$ is exactly the set of vertices that 
is disjoint from $C(G-x)$ for any $x\in V(G)$.
\end{enumerate}
\end{theorem}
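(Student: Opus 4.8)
The plan is to prove the two directions of the constructive characterization separately, and then dispatch the three supplementary items (i)--(iii) by invoking the machinery of Sections~\ref{sec:canonical} and \ref{sec:alternating}. For the necessity direction, suppose $G$ is saturated. First I would establish that the poset $(\mathcal{G}(G),\yield)$ has a minimum element $G_0$: the key tool is Theorem~\ref{thm:add}, which lets us add a pair of complement edges between any minimal element $G_1$ and any $G_2$ incomparable-above it without changing $\mathcal{G}(G)$ while forcing $G_1\yield G_2$; since $G$ is saturated, no such complement edges can exist (adding any complement edge must create a new perfect matching, hence change the family $\mathcal{G}$), so every minimal element already dominates from below, which means the minimal element is unique and is the minimum $G_0$. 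Then $G_0$ is elementary (as a factor-connected component), and one checks it is saturated (Lemma~\ref{lem:each_saturated}); similarly each connected component $G_S$ of $G - V(G_0)$ corresponds via Theorem~\ref{thm:base} to a class $S\in\pargpart{G}{G_0} = \gpart{G_0}$ and is a saturated graph, and $\up{S}$ being exactly $V(G_S)$ (with the assignment of Section~\ref{sec:alternating}) shows $G$ is obtained from $G_0$ and $\{G_S\}$ by joining each $S$ completely to $G_S$. The content here is that saturatedness forces \emph{all} the possible joining edges predicted by Theorem~\ref{thm:base} to actually be present --- i.e.\ $\Gamma_G(K)\cap V(H)$ is not merely contained in $S_K$ but equals $S_K$, and in fact every vertex of $S_K$ is joined to every vertex of $K$ --- which follows from maximality together with Proposition~\ref{prop:separating2saturated}-type arguments about which complement edges can be added while preserving the matching family.

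For the sufficiency direction, suppose $G$ is built by the cathedral construction from a saturated elementary foundation $G_0$ and saturated towers $\{G_S\}_{S\in\gpart{G_0}}$. I would first verify, by induction on the total number of vertices, that $G$ is factorizable and that $\mathcal{G}(G)$ is the disjoint union of $\mathcal{G}(G_0)=\{G_0\}$ and the $\mathcal{G}(G_S)$, with $G_0$ the minimum of $(\mathcal{G}(G),\yield)$ --- this is essentially checking that the newly added edges $E_G[S, V(G_S)]$ are never allowed, because in any perfect matching $S$ must be matched inside $G_0$ (the canonical classes of an elementary graph are the "deficiency-generating" sets) and $V(G_S)$ must be matched inside $G_S$. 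Then to show $G$ is saturated, take any complement edge $e=xy$ and show $G+e$ has a new perfect matching. The cases split according to where $x,y$ lie: if both are inside one building block, use that block's saturatedness; the genuinely new case is when $x$ and $y$ lie in different blocks, where I would use the balanced-path structure from Lemma~\ref{lem:combination} (or Proposition~\ref{prop:nonpositive}/Lemma~\ref{lem:reach}) to build an $M$-balanced path between $x$ and $y$ through the hierarchy, so that $M\Delta(E(P)+e)$ is a new perfect matching containing $e$.

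Finally, item~(iii) is immediate: once we know (from either direction) that $G_0$ is the minimum of $(\mathcal{G}(G),\yield)$, Theorem~\ref{thm:ge2cathedral} gives exactly $V(G_0) = V(G)\setminus\bigcup_{x\in V(G)}C(G-x)$. Item~(ii), uniqueness of $G_0$, then follows from (iii) since the right-hand side is defined purely in terms of $G$. Item~(i), the characterization of allowed edges, follows from the description of $\mathcal{G}(G)$ obtained in the sufficiency proof: the allowed edges of $G$ are exactly the edges lying in some factor-connected component, i.e.\ in $G_0$ or some $G_S$, and within each block "allowed in $G$" coincides with "allowed in that block" because a perfect matching of $G$ restricts to a perfect matching of each block and conversely any combination of perfect matchings of the blocks extends to one of $G$ (the joining edges being unused). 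The main obstacle I expect is the necessity direction's bookkeeping: showing that saturatedness forces the join between each class $S$ and its assigned tower $G_S$ to be \emph{complete} (every vertex of $S$ to every vertex of $G_S$) and that the decomposition of $G-V(G_0)$ into towers is the right one --- this requires carefully combining the maximality hypothesis with Theorem~\ref{thm:base} and the alternating-path reachability lemmas, and is where the bulk of the work lies.
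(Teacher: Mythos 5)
Your proposal takes essentially the same route as the paper: necessity via the minimum element of $(\mathcal{G}(G),\yield)$ (Lemma~\ref{lem:minimum}, driven by Theorem~\ref{thm:add}) together with Lemmas~\ref{lem:gpartispart}, \ref{lem:each_saturated}, and \ref{lem:join}; sufficiency by showing the joining edges are never allowed so that $G_0\in\mathcal{G}(G)$ is the minimum of the poset; and item~(iii) directly from Theorem~\ref{thm:ge2cathedral}. Two small notes: in your sufficiency sketch the cross-block path must be $M$-\emph{saturated}, not $M$-balanced, so that $P+e$ is an $M$-alternating circuit (the paper actually sidesteps this by just citing Lov\'asz's own easy direction, Proposition~\ref{prop:cathedral_nec}); and for item~(ii) you derive uniqueness from (iii), which is valid and perhaps cleaner than the paper's appeal to the canonicity of $(\mathcal{G}(G),\yield)$, but not a genuinely different route.
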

In the cathedral construction, 
each tower is saturated. 
Therefore, 
the first sentence of Theorem~\ref{thm:cathedral} 
reveals a nested or inductive structure 
and 
gives 
a constructive characterization of the saturated graphs 
by the cathedral construction.  
%
%
In this characterization, 
the elementary saturated graphs are the fundamental building blocks. 
Theorem~\ref{thm:cathedral} \ref{item:allow} tells that 
a set of edges in a saturated graph $G$ is a perfect matching 
if and only if it is 
a disjoint union of perfect matchings of 
the foundation and the towers that create $G$. 
Theorem~\ref{thm:cathedral} \ref{item:unique} tells that  
for each saturated graph, 
the way to construct it  uniquely exists, 
and \ref{item:c_sat} shows a relationship 
between the cathedral construction and the Gallai-Edmonds partition.

In the new proof, the following two theorems, Theorems~\ref{thm:cathedral_nec} and 
\ref{thm:cathedral_suff}, together with Theorem~\ref{thm:ge2cathedral}, 
will serve as nuclei,  referring to the special features of the poset and the canonical partition for saturated graphs. 
\begin{theorem}\label{thm:cathedral_nec}
If a factorizable graph $G$ is saturated,  
then the poset $(\mathcal{G}(G), \yield)$ has the minimum element, 
say $G_0$, 
and  it satisfies $\pargpart{G}{G_0} = \gpart{G_0}=: \mathcal{P}_0$. 
Additionally, for each $S\in \mathcal{P}_0$, 
the connected component $G_S$ of $G-V(G_0)$  
such that $\Gamma_{G}(G_S)\subseteq S$ 
exists uniquely or is an empty graph, and 
$G$ is the graph obtained from the foundation $G_0$ and the family of towers  
$\mathcal{T} := \{G_S\}_{S\in \mathcal{P}_0}$ by  the cathedral construction.   
\end{theorem}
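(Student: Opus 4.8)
The plan is to combine Theorem~\ref{thm:add} with the edge-maximality of saturated graphs to force the poset $(\mathcal{G}(G),\yield)$ to have a minimum element, and then to read off the foundation/tower decomposition from Theorems~\ref{thm:base} and \ref{thm:ge2cathedral}. First I would prove the existence of a minimum element (this is Lemma~\ref{lem:minimum} as announced in the outline). Suppose for contradiction that $(\mathcal{G}(G),\yield)$ has two distinct minimal elements $G_1,G_2$, or more generally that it has a minimal element $G_1$ and some $G_2\in\mathcal{G}(G)$ with $G_1\yield G_2$ failing. By Theorem~\ref{thm:add}, there are complement edges $e,f$ of $G$ joining $V(G_1)$ and $V(G_2)$ with $\mathcal{G}(G+e+f)=\mathcal{G}(G)$. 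Since $\mathcal{G}$ is unchanged, the family of perfect matchings is unchanged, so in particular $e$ is a complement edge whose addition does not create a new perfect matching (neither does $f$'s). This contradicts saturation of $G$. Hence every pair of elements of $\mathcal{G}(G)$ is comparable along $\yield$ at the minimal level, which, together with Theorem~\ref{thm:order} (that $\yield$ is a partial order with a finite ground set), yields a unique minimum element $G_0$.

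Next I would establish $\pargpart{G}{G_0}=\gpart{G_0}$. The inclusion that $\pargpart{G}{G_0}$ refines $\gpart{G_0}$ is Fact~\ref{fact:refinement}. For the reverse, I would again use saturation: if some class $S_0\in\gpart{G_0}$ were split into two distinct classes $S,S'\in\pargpart{G}{G_0}$, pick $u\in S$, $v\in S'$; then $u\gsim{G_0}v$ but $u\not\gsim{G}v$, so $G-u-v$ is factorizable. Using a perfect matching $M$ of $G$ and Fact~\ref{fact:deletable2path}, there is an $M$-saturated path between $u$ and $v$ in $G$, and I would argue (via the structure of $\up{S}$, $\up{S'}$ and Lemma~\ref{lem:reach}) that this path must leave $V(G_0)$ and pass through some upper bound, contradicting $u\gsim{G_0}v$ inside the elementary graph $G_0$ together with Proposition~\ref{prop:ear-base}. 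Concretely, the $M$-ear structure relative to $G_0$ forces the two end vertices into a single class of $\pargpart{G}{G_0}$, so $S=S'$.

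Then I would handle the towers. For $S\in\mathcal{P}_0$, the set $\vup{S}$ is, by Theorem~\ref{thm:base} and the definition of $\parup{G}{S}$, exactly a union of connected components of $G-V(G_0)$ whose neighborhood in $V(G_0)$ lies inside $S$; I would check that in fact all of $\vup{S}$ is a single connected component $G_S$ (or empty) by using saturation once more—if $\vup{S}$ split into two components, adding an edge between them would not change $\mathcal{G}(G)$ by a Theorem~\ref{thm:add}-style argument applied within the sub-poset above $G_0$—and that distinct $S$ give vertex-disjoint $G_S$ covering $V(G)\setminus V(G_0)$, since $\up{G_0}=\bigcup_{S\in\mathcal{P}_0}\up{S}$ and $G_0$ is the minimum. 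Finally, every vertex of $V(G_0)$ is joined to every vertex of $G_S$ (for the relevant $S$): this is the one place I expect real work, and it is where saturation is used most heavily—any missing edge between $S$ and $G_S$ would be a complement edge whose addition keeps $\mathcal{G}$ fixed (hence keeps the perfect matchings fixed), contradicting saturation; the verification that adding such an edge does not create a new perfect matching rests on Lemma~\ref{lem:reach}\,\ref{item:nosaturate}/\ref{lem:combination}\,\ref{item:inverse}, which say there is no $M$-balanced path from $S$ into $\vup{S}$. Together these show $G$ is precisely the cathedral construction applied to $G_0$ and $\mathcal{T}=\{G_S\}_{S\in\mathcal{P}_0}$. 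The main obstacle throughout is bookkeeping: repeatedly translating "adding a complement edge keeps $\mathcal{G}(G)$ unchanged" into "it creates no new perfect matching" and then into a contradiction with saturation, while making sure the edges supplied by Theorem~\ref{thm:add} are genuinely complement edges in each sub-situation.
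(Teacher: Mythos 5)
Your overall plan matches the paper's route (existence of a minimum via Theorem~\ref{thm:add} and saturation; then showing $\pargpart{G}{G_0}=\gpart{G_0}$, the uniqueness of each tower, and the complete join between $S$ and $G_S$, each by translating a missing edge into a new $M$-saturated path contradicting the structure). But there are three concrete gaps in the middle two steps.

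For $\pargpart{G}{G_0}=\gpart{G_0}$ you end with ``the $M$-ear structure relative to $G_0$ forces the two end vertices into a single class of $\pargpart{G}{G_0}$, so $S=S'$'' --- but the end vertices of an ear being in one class does not by itself put $u$ and $v$ in one class. The missing ingredient is the shortcut: since $G$ is saturated and the ear's two end vertices $x,y$ satisfy $x\gsim{G}y$ (Proposition~\ref{prop:ear-base}), by Fact~\ref{fact:deletable2path} there is no $M$-saturated path between them, hence by Property~\ref{prop:complement} the edge $xy$ must already be in $E(G)$ (this is exactly Fact~\ref{prop:complete} of the paper). So a shortest $M$-saturated path from $u$ to $v$ cannot contain any ear relative to $G_0$, i.e., it lies entirely in $G_0$, giving $u\not\gsim{G_0}v$. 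Without this shortest-path-plus-shortcut step the argument does not close.

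For uniqueness of $G_S$ you invoke ``a Theorem~\ref{thm:add}-style argument applied within the sub-poset above $G_0$,'' but Theorem~\ref{thm:add} concerns a minimal element of the whole poset $(\mathcal{G}(G),\yield)$ and a component not above it; it is not stated for pairs of components above $G_0$, and it is not clear you can just reuse it there. The paper instead shows directly that $\hat K := G[\bigcup V(K_i)]$ (union of all components of $G-V(G_0)$ with neighborhood in $S$) is itself saturated: any $M$-saturated path in $G$ between two vertices of $\hat K$ must stay in $\hat K$, since otherwise a piece of it would be an $M$-saturated path between two vertices of $S$, contradicting Fact~\ref{fact:deletable2path}. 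Then saturated $\Rightarrow$ connected (Fact~\ref{fact:connected}) forces $\hat K$ to be a single component. This also establishes, in one pass, the further fact you never mention but the theorem requires: that each $G_S$ is saturated, and likewise $G_0$ (via the same Fact~\ref{prop:complete} argument), so that $G_0$ and $\{G_S\}$ genuinely qualify as a foundation and towers. Finally, Theorem~\ref{thm:ge2cathedral}, which you list as one of your tools, plays no role in the proof of Theorem~\ref{thm:cathedral_nec}; it is used only later for the Gallai--Edmonds clause of Theorem~\ref{thm:cathedral}.
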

\begin{theorem}\label{thm:cathedral_suff}
Let $G_0$ be a saturated elementary graph, 
and 
$\mathcal{T}:= \{G_S\}_{S\in\gpart{G_0}}$ be a family of saturated graphs. 
Let $G$ be the graph obtained 
from the foundation $G_0$ and the family of towers $\mathcal{T}$ 
by  the cathedral construction. 
Then, $G$ is saturated, 
$G_0$ forms a factor-connected component of $G$, 
that is, $G[V(G_0)]\in \mathcal{G}(G)$, 
and it is the minimum element of the poset $(\mathcal{G}(G), \yield )$. 
\end{theorem}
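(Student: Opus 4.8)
The plan is to first pin down the extremely rigid structure of the perfect matchings of $G$ --- that a perfect matching of $G$ is precisely a disjoint union of a perfect matching of $G_0$ with one of each tower $G_S$ --- and then to read off the three assertions. Two features of the construction will be used throughout: the cathedral construction adds no edge inside $V(G_0)$, so $G[V(G_0)]=G_0$; and the only edges of $G$ with one end in $V(G_0)$ and one end outside it are those joining a class $S\in\gpart{G_0}$ to $V(G_S)$, so distinct towers attach to disjoint subsets of $V(G_0)$ and share no edge. I also use the standard criterion: a factorizable graph is saturated iff $G-x-y$ is factorizable for every complement edge $xy$ (a perfect matching of $G-x-y$ together with $xy$ is then a perfect matching of $G+xy$ not present in $G$).

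First I would prove the rigidity claim, which is the technical core. Let $M$ be a perfect matching of $G$ and, for each $S$, let $A_S\subseteq S$ be the set of vertices of $S$ that $M$ matches into $V(G_S)$. Restricting $M$ to $V(G_S)$ exhibits $G_S$ minus the $|A_S|$ partners in $V(G_S)$ as factorizable, so $|A_S|$ is even since $G_S$ is factorizable; restricting $M$ to $V(G_0)$ shows $G_0-\bigcup_S A_S$ is factorizable. I claim this forces every $A_S=\emptyset$. This rests on the fact that in the elementary graph $G_0$ each canonical class $S$ is a barrier with $G_0-S$ consisting of exactly $|S|$ factor-critical components (a standard property of the canonical partition of an elementary graph); combined with $G_0$ being saturated --- which makes every edge inside a class un-allowed, and makes $G_0-u-v$ factorizable for vertices $u,v$ in distinct classes --- this leaves a perfect matching no freedom to spend a vertex of a class off the components of $G_0-S$, and the rigidity then propagates through the recursively built towers, which I would handle by induction on the total number of tower vertices, invoking Theorem~\ref{thm:cathedral_nec} for each (smaller, saturated) tower. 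Granting the claim, $M$ restricts to a perfect matching of $G_0$ and of every $G_S$, and conversely any such disjoint union is a perfect matching of $G$; hence $G$ is factorizable, the allowed edges of $G$ are exactly those of $G_0$ together with those of all $G_S$, so $\mathcal{G}(G)=\{G_0\}\cup\bigcup_S\mathcal{G}(G_S)$, and in particular $G[V(G_0)]=G_0\in\mathcal{G}(G)$ --- the second assertion. Equivalently, $V(G_0)$ and, more generally, each $V(G_0)\cup V(G_S)$ is a separating set of $G$.

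Next I would verify saturation via the criterion above: for a complement edge $xy$ of $G$ I assemble a perfect matching of $G-x-y$ from perfect matchings of the pieces. If $x,y\in V(G_0)$, then $G_0-x-y$ is factorizable ($G_0$ is saturated and $xy$ is a complement edge of $G_0$), and I adjoin perfect matchings of all towers. If $x,y\in V(G_S)$ for one $S$, then $G_S-x-y$ is factorizable ($G_S$ is saturated), and I adjoin a perfect matching of $G_0$ and of the other towers. Otherwise $x$ and $y$ lie in different branches: $x\in V(G_S)$ and either $y\in V(G_T)$ with $T\neq S$, or $y\in V(G_0)\setminus S$. Take a perfect matching of each affected tower, delete its edge at $x$ (and at $y$ if $y$ is in a tower), re-match each freed partner to some vertex of the class to which its tower attaches --- an edge supplied by the cathedral construction --- and close up by a perfect matching of $G_0$ minus the resulting two vertices; these lie in two distinct classes of $\gpart{G_0}$, so their simultaneous deletion leaves $G_0$ factorizable by the definition of $\gsim{G_0}$ (an elementary graph being a single factor-connected component). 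Adjoining perfect matchings of the remaining towers yields a perfect matching of $G-x-y$ in every case, so $G$ is saturated.

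Finally, since $\yield$ is a partial order (Theorem~\ref{thm:order}), $G_0\in\mathcal{G}(G)$, and $\mathcal{G}(G)=\{G_0\}\cup\bigcup_S\mathcal{G}(G_S)$, it suffices to show $G_0\yield K$ for every $S$ and every $K\in\mathcal{G}(G_S)$. I take $X:=V(G_0)\cup V(G_S)$: it is separating by the rigidity step, contains $V(G_0)\cup V(K)$, and contracting $V(G_0)$ to a single vertex $z$ turns $G[X]$ into $G_S$ with $z$ joined to \emph{every} vertex of $V(G_S)$ (each such vertex being joined to $S$). A graph obtained from a factorizable graph by adjoining a universal vertex is factor-critical --- delete the new vertex to recover the factorizable graph, or delete any other vertex $w$ and match the new vertex to the old partner of $w$ --- and $G_S$ is factorizable, so $G[X]/V(G_0)$ is factor-critical and $G_0\yield K$. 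Hence $G_0$ is the minimum of $(\mathcal{G}(G),\yield)$. The only genuinely delicate point in all of this is the rigidity claim --- that no perfect matching of $G$ crosses between the foundation and a tower; the rest is a routine assembly of perfect matchings of known factorizable pieces together with one contraction.
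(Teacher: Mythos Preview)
Your saturation case analysis and your minimality argument (the universal-vertex factor-criticality of $G[V(G_0)\cup V(G_S)]/V(G_0)$) are both correct and pleasantly direct. The gap is exactly where you flag it: rigidity. Your barrier sketch needs $S$ to be a barrier in $G$, not merely in $G_0$ --- the $|S|$ odd components of $G_0-S$ must persist as $|S|$ odd components of $G-S$ after the even, connected towers $G_T$ ($T\neq S$) are attached. That holds only if each class $T\neq S$ lies inside a single component of $G_0-S$; otherwise attaching $G_T$ merges two odd components into one even one and the count falls below $|S|$. You do not supply this, and neither the remark that intra-class edges are un-allowed nor the vague induction invoking Theorem~\ref{thm:cathedral_nec} fills it. The missing fact is true and uses the saturation of $G_0$: by Fact~\ref{fact:complete} each class of $G_0$ is a clique, so if $t_1,t_2\in T$ lay in distinct components $C_1,C_2$ of $G_0-S$, one could build a perfect matching of $G_0-t_1-t_2$ from perfect matchings of the factor-critical $C_1-t_1$ and $C_2-t_2$, near-perfect matchings of the remaining $C_i$ matched to $S$, and a single edge inside $S$ --- contradicting $t_1\gsim{G_0}t_2$.

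The paper takes a different route that avoids barriers entirely, in keeping with its stated aim. It cites Proposition~\ref{prop:cathedral_nec} for saturation, proves $G/V(G_0)$ factor-critical directly (Lemma~\ref{lem:fc}), and shows $G_0\in\mathcal{G}(G)$ by induction on the number of non-empty towers: delete one tower $G_S$ to obtain $G'$; the induction hypothesis plus Proposition~\ref{prop:cathedral_nec} make $G'$ saturated with $G_0$ minimum in $(\mathcal{G}(G'),\yield)$; then Lemma~\ref{lem:gpartispart} gives $S\in\pargpart{G'}{G_0}$, and an allowed edge from $S$ to $V(G_S)$ would force an $M$-saturated path in $G'$ between two vertices of $S$, contradicting Fact~\ref{fact:deletable2path}. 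Your barrier route, once the missing lemma is supplied, does work and has the merit of proving saturation from scratch; but the paper's inductive path-based argument is what actually closes the rigidity step without appealing to barrier structure.
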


In the remaining part of this paper, 
we are going to prove Theorem~\ref{thm:cathedral_nec} and Theorem~\ref{thm:cathedral_suff} and then obtain Theorem~\ref{thm:cathedral}. 
With Theorem~\ref{thm:cathedral_nec} and Theorem~\ref{thm:cathedral_suff}, 
we obtain the constructive characterization of the saturated graphs.  We also obtain 
 a new characterization of foundations and families of towers, 
which gives a clear comprehension of saturated graphs 
by the canonical structures of factorizable graphs in Section~\ref{sec:canonical}. 
Thanks to this new characterization, 
the remaining statements of the cathedral theorem will be obtained quite smoothly. 

\subsection{Proof of Theorem~\ref{thm:cathedral_nec}}

Here we show some lemmas etc. to show that any saturated graph is constructed by the cathedral construction and prove Theorem~\ref{thm:cathedral_nec}.   
\begin{lemma}\label{lem:minimum}
If a \matchablesp graph $G$ is saturated,
then the poset $(\mathcal{G}(G), \yield)$ has the minimum element.
\end{lemma}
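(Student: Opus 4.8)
The plan is to argue by contradiction, using Theorem~\ref{thm:add} as the engine. Suppose $G$ is saturated but $(\mathcal{G}(G), \yield)$ has no minimum element. Since $\yield$ is a partial order on the finite set $\mathcal{G}(G)$ (Theorem~\ref{thm:order}), it has minimal elements, and the absence of a minimum means there are at least two distinct minimal elements, or more generally there exist $G_1, G_2 \in \mathcal{G}(G)$ with $G_1$ minimal and $G_1 \yield G_2$ failing to hold. First I would fix such a pair: take $G_1$ a minimal element of the poset and $G_2 \in \mathcal{G}(G)$ with $G_1 \not\yield G_2$ (such $G_2$ exists, for otherwise $G_1$ would be below everything and hence be the minimum).

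Next I would invoke Theorem~\ref{thm:add}: it gives (possibly identical) complement edges $e, f$ of $G$ joining $V(G_1)$ and $V(G_2)$ such that $\mathcal{G}(G+e+f) = \mathcal{G}(G)$ and $G_1 \yield G_2$ holds in $(\mathcal{G}(G+e+f), \yield)$. The crucial point is that $\mathcal{G}(G+e+f) = \mathcal{G}(G)$ means that adding $e$ and $f$ does not change the set of allowed edges among the old edges, and in particular the family of perfect matchings does not grow: every perfect matching of $G+e+f$ restricted to the factor-connected components must already be a perfect matching of $G$, since the factor-connected components are unchanged and $e, f$ join distinct components, hence cannot lie in any perfect matching of $G + e + f$. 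More carefully, I would use the equivalent characterizations of ``separating'' sets listed after Definition~\ref{def:separating}: each $V(H)$ for $H \in \mathcal{G}(G) = \mathcal{G}(G+e+f)$ is separating in $G+e+f$, so any perfect matching $M$ of $G+e+f$ satisfies $\delta_{G+e+f}(V(H)) \cap M = \emptyset$ for every component $H$; thus $M$ uses no edge between distinct components of $G+e+f$, in particular not $e$ or $f$, so $M \subseteq E(G)$ and $M$ is a perfect matching of $G$. Therefore $G+e+f$ has exactly the same perfect matchings as $G$.

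Now I would derive the contradiction with saturation. At least one of $e, f$ is a genuine complement edge (if $G_1 \neq G_2$, any edge between them is a complement edge since they are distinct factor-connected components and no edge of $G$ joins them across the allowed-edge structure — here I should double check that $e$ is not already in $E(G)$, but this is guaranteed by Theorem~\ref{thm:add} which produces \emph{complement} edges). Then $G + e$ is a subgraph of $G + e + f$, so every perfect matching of $G+e$ is a perfect matching of $G+e+f$, hence of $G$; combined with the fact that every perfect matching of $G$ is trivially a perfect matching of $G+e$, we get that $G+e$ has exactly as many perfect matchings as $G$. But $e \notin E(G)$ is a complement edge, so saturation of $G$ demands that $G+e$ have \emph{strictly more} perfect matchings than $G$ — a contradiction. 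Hence $(\mathcal{G}(G), \yield)$ has a minimum element.

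The main obstacle I anticipate is the bookkeeping around ``complement edge'' versus ``edge already present'': I must make sure that the edges $e, f$ supplied by Theorem~\ref{thm:add} are honestly absent from $E(G)$ (the theorem's statement says ``complement edges,'' which by the definition in Section~\ref{sec:pre} means exactly that), and that at least one of them can be used to contradict saturation — taking $e$ suffices, and the case $e = f$ causes no trouble since we only need a single complement edge. The only other subtlety is confirming that $\mathcal{G}(G+e+f) = \mathcal{G}(G)$ really does force equality of the perfect-matching families, which follows cleanly from the separating-set equivalences already recorded in the text.
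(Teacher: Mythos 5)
Your proof is correct and takes essentially the same approach as the paper: suppose there is no minimum, invoke Theorem~\ref{thm:add} to obtain complement edges $e,f$ with $\mathcal{G}(G+e+f)=\mathcal{G}(G)$, observe that this forces the family of perfect matchings to be unchanged, and conclude that adding $e$ contradicts saturation. The paper chooses two distinct minimal elements $G_1, G_2$ while you take one minimal $G_1$ and some $G_2$ with $G_1 \not\yield G_2$; both satisfy the hypothesis of Theorem~\ref{thm:add} and the remainder of the argument is identical, with your version merely spelling out the separating-set reasoning that the paper leaves implicit.
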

\begin{proof} 
Suppose the claim fails, that is, 
the poset has distinct minimal elements $G_1, G_2\in\mathcal{G}(G)$. 
Then, by Theorem~\ref{thm:add}, 
there exist possibly identical complement edges $e, f$ 
joining $V(G_1)$ and $V(G_2)$ 
such that  $\mathcal{G}(G+e+f) = \mathcal{G}(G)$. 
This means that adding $e$ or $f$ to $G$ 
does not create any new perfect matchings, 
which contradicts $G$ being saturated. 
%
\qed\end{proof}
In order to obtain Theorem~\ref{thm:cathedral_nec},  
by letting $G$ be a saturated graph, 
we show in the following that the minimum element 
$G_0$ of the poset by $\yield$ and the connected components of $G-V(G_0)$ are well-defined 
as a foundation and towers of the cathedral construction 
and $G$ is the graph obtained by the cathedral construction with them.

The next fact is easy to see from Fact~\ref{fact:deletable2path} 
and Property~\ref{prop:complement}.  
We will use this fact 
in the proofs of  Lemma~\ref{lem:gpartispart} and Lemma~\ref{lem:each_saturated} later. 
\begin{fact}\label{prop:complete}\label{lem:complete}\label{fact:complete}
Let $G$ be a saturated graph, and let $H\in \mathcal{G}(G)$. 
Then, for any $u, v\in V(H)$ with $u\gsim{G} v$, 
$uv\in E(G)$. 
\end{fact}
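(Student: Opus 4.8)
\textbf{Proof proposal for Fact~\ref{fact:complete}.}
The plan is to argue by contradiction, exploiting the edge-maximality in the definition of ``saturated''. Suppose $G$ is saturated, $H\in\mathcal{G}(G)$, and $u,v\in V(H)$ satisfy $u\gsim{G} v$ but $uv\notin E(G)$. Then $uv$ is a complement edge of $G$, so by the definition of saturated, $G+uv$ has strictly more perfect matchings than $G$; in particular there is a perfect matching $N$ of $G+uv$ with $uv\in N$. Equivalently, $N\setminus\{uv\}$ is a perfect matching of $G-u-v$, so $G-u-v$ is factorizable. But $u\gsim{G} v$ together with $u\neq v$ (which holds since $uv$ is an edge of $\what{G}$, hence $u\neq v$) forces $G-u-v$ to be non-factorizable by the definition of $\gsim{G}$. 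This is a contradiction, so $uv\in E(G)$.

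First I would record the trivial observation that $u\gsim{G} v$ and $uv\notin E(G)$ together with $u\neq v$ put us in the regime where the second clause of the definition of $\gsim{G}$ applies, namely $G-u-v$ is not factorizable. Next I would unwind what ``$G+uv$ has more perfect matchings than $G$'' gives us: any perfect matching of $G+uv$ not already a perfect matching of $G$ must use the edge $uv$, and deleting $uv$ from such a matching yields a perfect matching of $G-u-v$. Combining these two steps immediately yields the contradiction. The argument really only uses the definitions, so no appeal to Fact~\ref{fact:deletable2path} or Property~\ref{prop:complement} is strictly necessary here—though if one prefers a path-theoretic phrasing, one can instead note via Fact~\ref{fact:deletable2path} that $u\gsim{G} v$ means there is no $M$-saturated path between $u$ and $v$ in $G$ for a perfect matching $M$ of $G$, and then Property~\ref{prop:complement} describes how adding the complement edge $uv$ creates a new perfect matching precisely when such a path exists; the absence of the path contradicts saturation.

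There is essentially no obstacle: the statement is a direct consequence of unwinding the two definitions involved. The only point requiring a moment's care is confirming $u\neq v$, so that we are in the nontrivial branch of the definition of $\gsim{G}$; this is immediate since $uv$ being referred to as a (complement) edge presupposes distinct endpoints, but it is worth stating explicitly. I would therefore keep the write-up to a few lines, presenting the contradiction argument directly.
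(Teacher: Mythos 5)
Your proof is correct and matches the paper's intent: the paper derives this Fact from Fact~\ref{fact:deletable2path} and Property~\ref{prop:complement}, which together amount to exactly the chain of implications you give (saturation forces $G+uv$ to gain a perfect matching, hence $G-u-v$ is factorizable, contradicting $u\gsim{G}v$ with $u\neq v$). You simply inline those two auxiliary lemmas into their definitional content, bypassing the $M$-saturated-path language; the logical substance is the same, and your explicit remark that $u\neq v$ is a useful small addition.
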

Next, we give the following lemma, which will contribute to the proofs of both of Theorems~\ref{thm:cathedral_nec} and \ref{thm:cathedral_suff}, actually. 
\begin{lemma}\label{lem:gpartispart}
Let $G$ be a saturated graph, 
and let $G_0\in\mathcal{G}(G)$. 
Then, $\pargpart{G}{G_0} = \gpart{G_0}$. 
\end{lemma}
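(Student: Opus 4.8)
The plan is to establish the equality by its only non-trivial direction. Since factor-connected components are separating, Fact~\ref{fact:refinement} already tells us that $\pargpart{G}{G_0}$ refines $\gpart{G_0}$, so it is enough to show that $u\gsim{G_0}v$ implies $u\gsim{G}v$ for all $u,v\in V(G_0)$. I would phrase this via alternating paths using Fact~\ref{fact:deletable2path}: fix a perfect matching $M_0$ of $G_0$ and extend it to a perfect matching $M=M_0\cup M_1$ of $G$ (possible because $V(G_0)$ is separating and $G-V(G_0)$ is factorizable), so that it suffices to prove that if there is no $M_0$-saturated path between $u$ and $v$ in $G_0$, then there is no $M$-saturated path between $u$ and $v$ in $G$.

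So suppose for contradiction that $P$ is an $M$-saturated path between $u$ and $v$ in $G$. If $V(P)\subseteq V(G_0)$ then, as $G_0$ is an induced subgraph and $M\cap E(G_0)=M_0$, the path $P$ would already be an $M_0$-saturated path in $G_0$; hence $P$ must leave $V(G_0)$. I would decompose $P$ along the boundary of $V(G_0)$: write $P$ as $Q_0,R_1,Q_1,R_2,\dots,R_t,Q_t$, where $Q_0,\dots,Q_t$ are the maximal sub-paths of $P$ contained in $G_0$ and each $R_i$ is the excursion running from the last vertex $x_i$ of $Q_{i-1}$ to the first vertex $z_i$ of $Q_i$ through vertices outside $V(G_0)$. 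Because $V(G_0)$ is separating, no edge of $M$ lies in $\delta_G(V(G_0))$, so every boundary-crossing edge of $P$ is a non-$M$ edge; combined with the fact that an $M$-alternating path never has two consecutive edges of the same kind, a short parity check shows that each $Q_i$ has its first and last edges in $M_0$ (so in particular is nonempty and of odd length), and that each $R_i$ is an $M$-ear relative to $V(G_0)$ with $x_i\neq z_i$.

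The last step invokes saturation. By Proposition~\ref{prop:ear-base} applied to the $M$-ear $R_i$ relative to the factor-connected component $G_0$, we get $x_i\gsim{G}z_i$, and since $G$ is saturated, Fact~\ref{fact:complete} yields $x_iz_i\in E(G)$, hence $x_iz_i\in E(G_0)$ as both ends lie in $V(G_0)$. Also the $M$-edge at $x_i$ belongs to $Q_{i-1}$ and the $M$-edge at $z_i$ to $Q_i$, so $x_iz_i\notin M$, hence $x_iz_i\notin M_0$. Replacing each excursion $R_i$ by the single edge $x_iz_i$ produces a path $P^{*}$ of $G_0$ from $u$ to $v$: it is simple (its vertices are exactly the vertices of $P$ lying in $V(G_0)$), it is $M_0$-alternating (the edge $x_iz_i$ is the non-$M_0$ edge sitting between the $M_0$-edge that ends $Q_{i-1}$ and the $M_0$-edge that starts $Q_i$), it begins and ends with an $M_0$-edge, and it has an odd number of edges. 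Therefore $M_0\cap E(P^{*})$ is a perfect matching of $P^{*}$, i.e., $P^{*}$ is an $M_0$-saturated path between $u$ and $v$ in $G_0$, contradicting $u\gsim{G_0}v$ via Fact~\ref{fact:deletable2path}.

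The part I expect to be the main obstacle is the middle step: one has to track the alternation pattern of $P$ as it crosses the boundary of $V(G_0)$ carefully enough to be sure that the crossing edges are precisely the non-$M$ edges, that degenerate excursions (with zero or one interior vertex) cannot occur because $P$ is $M$-alternating, and that the excursion-to-edge replacements preserve both simplicity and the alternation pattern so that $P^{*}$ really is $M_0$-saturated. Once this bookkeeping is done, Proposition~\ref{prop:ear-base} together with Fact~\ref{fact:complete} closes the argument at once, and the only place where saturation of $G$ enters is the appeal to Fact~\ref{fact:complete}.
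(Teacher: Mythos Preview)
Your proof is correct and follows essentially the same approach as the paper: both arguments use Fact~\ref{fact:refinement} for one inclusion, then take an $M$-saturated $u$--$v$ path, identify its excursions outside $G_0$ as $M$-ears, apply Proposition~\ref{prop:ear-base} and Fact~\ref{fact:complete} to obtain the shortcut edge, and conclude with Fact~\ref{fact:deletable2path}. The only cosmetic difference is that the paper takes a \emph{shortest} $M$-saturated path and replaces a single excursion to contradict minimality, whereas you replace all excursions at once to exhibit an $M_0$-saturated path in $G_0$ directly; the bookkeeping you flag as the ``main obstacle'' is handled in the paper by the shortest-path device, which avoids tracking the full decomposition.
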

\begin{proof}
Since we know by Fact~\ref{fact:refinement} 
that $\pargpart{G}{G_0}$ is a refinement of $\gpart{G_0}$, 
it suffices to prove 
that $\gpart{G_0}$ is a refinement of $\pargpart{G}{G_0}$,  
that is, 
if $u\gsim{G_0} v$, then $u\gsim{G} v$. 
We prove the contrapositive of this.  
 
%
Let $u,v\in V(G_0)$ with $u\not\gsim{G} v$. 
Let $M$ be a  perfect matching of $G$. 
By Fact~\ref{fact:deletable2path}, 
there are $M$-saturated paths between $u$ and $v$;    
let $P$ be a shortest one. 
Suppose $E(P)\setminus E(G_0) \neq \emptyset$, 
and let $Q$ be one of the connected components of $P - E(G_0)$,  
with end vertices $x$ and $y$. 
Since $Q$ is an $M$-ear relative to $G_0$ by Property~\ref{prop:separating2saturated}, 
$x\gsim{G} y$ follows by Proposition~\ref{prop:ear-base}.  
Therefore,  $xy\in E(G)$  by Fact~\ref{prop:complete}, which means 
we can get a shorter $M$-saturated path between $u$ and $v$ 
 by replacing $Q$ by $xy$ on $P$,
 a contradiction.   
Thus, we have $E(P)\setminus E(G_0) = \emptyset$;  
that is,  $P$ is a path of $G_0$. 
Accordingly, $u \not\gsim{G_0} v$ by Fact~\ref{fact:deletable2path}. 
%
\qed\end{proof}
As we mention in Fact~\ref{fact:refinement}, 
for a factorizable graph $G$ and $H\in\mathcal{G}(G)$,  
$\pargpart{G}{H}$ is generally a refinement of $\gpart{H}$. 
However,  the above lemma  states that 
if $G$ is a saturated graph then they coincide. 
Therefore this lemma associates the generalized canonical partition 
with the cathedral theorem.

Next, note the following fact, which we present to prove Lemma~\ref{lem:each_saturated}:
\begin{fact}\label{fact:connected}
If a factorizable graph $G$ is saturated, 
$G$ is connected. 
\end{fact}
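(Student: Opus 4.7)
The plan is to prove the contrapositive: if a factorizable graph $G$ is disconnected, then $G$ is not saturated. First I would pick two distinct connected components $G_1$ and $G_2$ of $G$. Since any perfect matching of $G$ cannot use an edge crossing between components, it restricts to a perfect matching on each component; hence both $G_1$ and $G_2$ are themselves factorizable and, in particular, have even order.

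Next I would choose arbitrary vertices $u \in V(G_1)$ and $v \in V(G_2)$. Because $u$ and $v$ lie in different components of $G$, there is no edge of $G$ joining them, so $e := uv$ is a complement edge of $G$. The key step is to show that $G+e$ has exactly the same set of perfect matchings as $G$. For any perfect matching $M$ of $G+e$, either $e \notin M$, in which case $M$ is already a perfect matching of $G$, or $e \in M$, in which case $M \setminus \{e\}$ would be a perfect matching of $G - u - v$. But $G - u - v$ decomposes as the disjoint union of $G_1 - u$ and $G_2 - v$, each of odd order, so it cannot admit a perfect matching; the second case is therefore impossible.

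This gives the desired contradiction: $G+e$ has no new perfect matching relative to $G$ despite $e$ being a complement edge, contradicting the definition of saturated. I do not expect any serious obstacle here---the entire argument rests on the simple parity observation that removing a single vertex from an even-order component destroys factorizability of that component, together with the fact that edges cannot cross between components in a matching.
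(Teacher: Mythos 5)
Your proof is correct and closely parallels the paper's in its setup: both take $u$ and $v$ in distinct components and observe that $uv$ is a complement edge whose addition must, by saturation, create a new perfect matching. Where you diverge is in how you rule that out. The paper invokes Property~\ref{prop:complement} (saturation plus $uv\notin E(G)$ implies an $M$-saturated path between $u$ and $v$ in $G$), and the contradiction is that such a path would connect the two components. You instead argue directly by parity: a perfect matching of $G+uv$ using $uv$ would yield a perfect matching of $G-u-v$, which decomposes into the two odd-order pieces $G_1-u$ and $G_2-v$, impossible. Your route is a touch more self-contained, replacing an appeal to the alternating-path equivalence with a one-line parity observation; the paper's route is shorter on the page because it leans on a lemma already established for use throughout Section~\ref{sec:proof}. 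Both are sound.
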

\begin{proof}
Suppose the claim fails, 
that is, $G$ has two distinct connected components, $K$ and $L$. 
Let $u\in V(K)$ and $v\in V(L)$, 
and let $M$ be a perfect matching of $G$. 
By Property~\ref{prop:complement},  
there is an $M$-saturated path between $u$ and $v$, 
contradicting the hypothesis that $K$ and $L$ are distinct. 
\qed\end{proof}
Before reading Lemma~\ref{lem:each_saturated}, 
note that if a factorizable graph $G$ has the minimum element $G_0$ 
for the poset $(\mathcal{G}(G), \yield)$, 
then for each connected component $K$ of $G-V(G_0)$, 
$\Gamma_{G}(K)\subseteq  V(G_0)$ holds. 
\begin{lemma}\label{lem:each_saturated}
Let $G$ be a saturated graph, 
and $G_0$ be the minimum element of the poset $(\mathcal{G}(G), \yield)$. 
Then, $G_0$ and the connected components of $G-V(G_0)$ are each saturated. 
Additionally, for each $S\in \pargpart{G}{G_0}$, 
a connected component $K$  of $G - V(G_0)$ such that 
$\Gamma_{G}(K) \subseteq S$ exists uniquely or does not exist.  
\end{lemma}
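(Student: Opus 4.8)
The plan is to prove the three assertions of Lemma~\ref{lem:each_saturated} in turn: first that $G_0$ is saturated, then that each connected component of $G - V(G_0)$ is saturated, and finally the uniqueness/non-existence dichotomy for the components attached to a given class $S \in \pargpart{G}{G_0}$.

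For the first assertion, I would argue by contradiction: suppose $G_0$ has a complement edge $e = xy$ (with $x, y \in V(G_0)$, $xy \notin E(G_0)$) whose addition creates no new perfect matching of $G_0$. I claim $e$ is also a complement edge of $G$ whose addition creates no new perfect matching of $G$, contradicting that $G$ is saturated. Since $G_0 \in \mathcal{G}(G)$, $V(G_0)$ is separating, so any perfect matching $M$ of $G$ restricts to a perfect matching of $G_0 = G[V(G_0)]$; thus a perfect matching of $G + e$ using $e$ would restrict to a perfect matching of $G_0 + e$ using $e$, which does not exist. Here I must be slightly careful that $xy \notin E(G)$ — but since $G_0 = G[V(G_0)]$ is an induced subgraph, $xy \notin E(G_0)$ gives $xy \notin E(G)$. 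Also, if no such $e$ exists, then $G_0$ is already complete on its vertex set, hence trivially saturated.

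For the second assertion, let $K$ be a connected component of $G - V(G_0)$; by the remark preceding the lemma, $\Gamma_G(K) \subseteq V(G_0)$, so $V(K)$ is separating in $G$ (any factor-connected component of $G$ contained in $V(K) \cup V(G_0)$ is either inside $V(G_0)$ or disjoint from it, hence $K$ is a union of factor-connected components). Actually what I need is that $K$ is factorizable: a perfect matching $M$ of $G$ restricts to a perfect matching of $G[V(K)]$ since $\delta_G(V(K)) \cap M = \emptyset$. Now suppose $K$ is not saturated, so there is a complement edge $e = xy$, $x,y \in V(K)$, with no new perfect matching in $K + e$. Again $e$ is a complement edge of $G$ (as $K = G[V(K)]$ is induced), and any perfect matching of $G + e$ using $e$ restricts, via the separating property of $V(K)$, to a perfect matching of $K + e$ using $e$ — a contradiction. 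If no such $e$ exists, $K$ is complete and trivially saturated.

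For the third assertion, the content is that for $S \in \pargpart{G}{G_0}$ there is at most one connected component $K$ of $G - V(G_0)$ with $\Gamma_G(K) \subseteq S$. Suppose $K_1, K_2$ are two distinct such components, both with neighborhood inside $S$. Since $G$ is connected by Fact~\ref{fact:connected}, and $\Gamma_G(K_1), \Gamma_G(K_2) \subseteq S \subseteq V(G_0)$, I would derive a contradiction by considering a perfect matching $M$ of $G$ and using Lemma~\ref{lem:combination}: pick $u_1 \in V(K_1)$, $u_2 \in V(K_2)$; since $K_1$ and $K_2$ lie in $\vup{S}$ (they are among the strict upper bounds of $G_0$ assigned to $S$ — this needs the relationship from Theorem~\ref{thm:base}, which is exactly how $\up{S}$ is defined), any $M$-alternating path between $u_1$ and $u_2$ that leaves $K_1$ must pass through $S$; but an $M$-ear relative to $G_0$ with both endpoints reached from $K_1$-side would force, via Proposition~\ref{prop:ear-base} and Fact~\ref{fact:complete}, an edge inside $S \subseteq V(G_0)$, and the walk structure shows $K_1$ and $K_2$ cannot be joined without passing through $V(G_0)$ — contradicting that they are in the same $\vup{S}$-bucket tied to a single class, unless $K_1 = K_2$. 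The main obstacle here is getting the bookkeeping right: I expect the cleanest route is to observe that, by the definition of $\parup{G}{S}$ via Theorem~\ref{thm:base}, a connected component $K$ of $G - V(G_0)$ with $\Gamma_G(K) \subseteq S$ corresponds to a connected component of $G[\vup{G_0}]$ contained in $\vup{S}$, and then to show that the saturation of $G$ forces any two such to be joined — hence merged into one — because otherwise one could add an edge between them without creating a new perfect matching (the separating property again prevents a new perfect matching through such an edge). I would lean on Lemma~\ref{lem:combination} and Fact~\ref{fact:complete} to convert "no new perfect matching" into "already adjacent", pushing the two components into the same connected component of $G - V(G_0)$, so $K_1 = K_2$.
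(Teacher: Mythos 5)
The central step in your proof of the first two assertions is the claim that ``a perfect matching of $G+e$ using $e$ would restrict to a perfect matching of $G_0+e$ using $e$'' (and analogously for $K+e$), justified by the fact that $V(G_0)$ is separating in $G$. This is a genuine gap: the separating property is a statement about perfect matchings of $G$, and it does not automatically extend to $G+e$. Adding $e$ can create new perfect matchings that cross $\delta_G(V(G_0))$, in which case the restriction to $G_0+e$ is not a perfect matching at all. For a concrete failure of this logic outside the saturated setting, take $X=\{v_1,v_2,v_3,v_4\}$ with edges $v_1v_2,\,v_3v_4$, a separate pair $\{a,b\}$ with edge $ab$, and cross edges $v_2a,\,v_3b$: the only perfect matching is $\{v_1v_2,v_3v_4,ab\}$, so $X$ is separating, yet $G+v_1v_4$ has the new perfect matching $\{v_1v_4,v_2a,v_3b\}$, which crosses $\delta(X)$ and does not restrict to $G[X]+v_1v_4$ (which in fact has no perfect matching through $v_1v_4$). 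What actually rescues the argument in the saturated case is Lemma~\ref{lem:gpartispart}: a complement edge $xy$ of $G_0$ (resp.\ of a tower) has $x\not\gsim{G}y$ because $G$ is saturated (Fact~\ref{fact:complete}), and Lemma~\ref{lem:gpartispart} transfers this to $x\not\gsim{G_0}y$, whence Fact~\ref{fact:deletable2path} and Property~\ref{prop:complement} yield the new perfect matching in $G_0+xy$. You nowhere invoke Lemma~\ref{lem:gpartispart}, and without it the inference collapses.

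The third assertion is handled only in sketch form, and you yourself flag that the bookkeeping is not in place. The paper avoids your case analysis entirely by forming $\hat K$, the union of \emph{all} components of $G-V(G_0)$ with neighborhood in $S$, proving $\hat K$ is saturated via Property~\ref{prop:separating2saturated} (which controls how an $M$-saturated path crosses a separating set) together with Fact~\ref{fact:deletable2path} applied to $S\in\pargpart{G}{G_0}$, and then deducing $\hat K$ is connected from Fact~\ref{fact:connected} --- which gives uniqueness and the second assertion in a single stroke. Your second sketch (``adding an edge between $K_1$ and $K_2$ creates no new perfect matching'') is the morally right idea, but its justification again leans on the invalid restriction step and would need to be replaced by the path-decomposition argument just described.
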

\begin{proof}
We first prove that $G_0$ is saturated. 
Let $e = xy$ be a complement edge of $G_0$. 
By the contrapositive of Fact~\ref{prop:complete},  $x\not\gsim{G} y$, 
which means $x\not\gsim{G_0} y$ by Lemma~\ref{lem:gpartispart}. 
Therefore, 
by Fact~\ref{fact:deletable2path} and Property~\ref{prop:complement}, 
the complement edge $e$ creates a new perfect matching if it is added to $G_0$. 
Hence, $G_0$ is saturated.  

Now we move on to the remaining claims. 
Take $S\in \pargpart{G}{G_0}$ arbitrarily, 
and let $K_1, \ldots, K_l$ be the connected components of $G-V(G_0)$ 
which satisfy $\Gamma_{G}(K_i) \subseteq S$
 for each $i = 1,\ldots, l$. 
 Let $\hat{K} := G[ V(K_1)\dot{\cup}\cdots \dot{\cup} V(K_l)]$. 

We are going to obtain the remaining claims by showing that 
$\hat{K}$ is saturated.   
Now let $e = xy$ be a complement edge of $\hat{K}$,  i.e.,  $x,y\in V(\hat{K})$ 
and $xy\not\in E(\hat{K})$. 
Let $M$ be a perfect matching of $G$. 
With Property~\ref{prop:complement}, 
in order to show that $\what{K}$ is saturated 
it suffices to prove that there is an $M$-saturated path between 
$x$ and $y$ in $\what{K}$.  
Since $G$ is saturated,  there is an $M$-saturated path $P$ between $x$ and $y$ in $G$
by Property~\ref{prop:complement}. 
 
 Obviously by the definition, $\Gamma_{G}(\hat{K})\subseteq S$; 
on the other hand, $V(G)\setminus V(\hat{K})$ is of course a separating set. 
Therefore,  if $E(P) \setminus E(\hat{K}) \neq \emptyset$, 
each connected component of  $P - V(\hat{K})$ is an $M$-saturated path,  
both of whose end vertices are contained in $S$, 
by Property~\ref{prop:separating2saturated}.    
This contradicts Fact~\ref{fact:deletable2path}. 
Hence, $E(P) \subseteq E(\hat{K})$, which means  
$\hat{K}$ is itself saturated. 
Thus, by Fact~\ref{fact:connected}, 
it follows $\hat{K}$ is connected, 
which is equivalent to $l = 1$. 
This completes the proof. 
\qed\end{proof}
By Lemma~\ref{lem:gpartispart} and Lemma~\ref{lem:each_saturated}, it follows that $G_0$ is well-defined as a foundation 
and the connected components of $G-V(G_0)$ are well-defined as towers 
(of course if indices out of $\gpart{G_0}$ are assigned to them appropriately). \begin{lemma}\label{lem:join}
Let $G$ be a saturated graph, 
and $G_0$ be the minimum element of the poset $(\mathcal{G}(G), \yield)$,   
 and 
let $K$ be a connected component of $G-V(G_0)$, whose neighbors are in $S\in\pargpart{G}{G_0}$. 
Then, for any $u\in V(K)$ and for any $v\in S$,  $uv\in E(G)$. 
\end{lemma}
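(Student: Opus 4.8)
The plan is to exploit the saturation of $G$ together with the structural facts already established about the minimum element $G_0$ of the poset $(\mathcal{G}(G),\yield)$. Fix a perfect matching $M$ of $G$, a connected component $K$ of $G-V(G_0)$ with $\Gamma_G(K)\subseteq S$ for some $S\in\pargpart{G}{G_0}$, and vertices $u\in V(K)$ and $v\in S$. If $uv\in E(G)$ we are done, so assume $uv$ is a complement edge of $G$. Since $G$ is saturated, by Property~\ref{prop:complement} there is an $M$-saturated path $P$ between $u$ and $v$ in $G$. The first step is to analyze how $P$ can travel between $K$ and $G_0$: the neighbors of $K$ (and of the whole set $V(G)\setminus(V(G_0)\cup\text{towers attached to }S)$, but it suffices to work with $K$ and $G_0$) lie in $S$. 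I would use Theorem~\ref{thm:base} / Lemma~\ref{lem:reach} type reachability, or more directly the fact that $V(G_0)$ together with the union of towers is separating in the appropriate sense, so that each maximal subpath of $P$ lying outside $V(G_0)$ is an $M$-ear relative to $G_0$ and hence, by Proposition~\ref{prop:ear-base}, has both end vertices in a single class of $\pargpart{G}{G_0}$; and each such subpath incident to $K$ actually has both ends in $S$.

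The key step is then to shortcut $P$. Using Fact~\ref{fact:complete} (every two $\gsim{G}$-equivalent vertices in a common factor-connected component are adjacent in a saturated graph) applied inside $G_0$, and the fact that any two vertices of $S$ are $\gsim{G}$-equivalent hence adjacent, I can replace any detour of $P$ through $G_0$ by a single edge, and similarly collapse the excursions into $K$. The aim is to reduce $P$ to an $M$-saturated path whose interior lies entirely in $V(K)\cup S$, and in fact — since $u\in V(K)$ and $v\in S$ — to show that $P$ can be taken to use at most one crossing edge between $V(K)$ and $S$, so that the very existence of such a path forces an edge of $G$ between a vertex of $V(K)$ and a vertex of $S$. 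Here I would either invoke Lemma~\ref{lem:join}'s own statement in a minimal-counterexample form (take $u,v$ minimizing the length of an $M$-saturated path between them) or argue that the last edge of $P$ incident to $v$ from inside $K$ already witnesses an edge between $V(K)$ and $S$; pushing this along the path, every vertex of $V(K)$ visited by $P$ gets joined to a vertex of $S$, and then using Fact~\ref{fact:complete} inside the saturated, connected graph $K$ (which is saturated by Lemma~\ref{lem:each_saturated}) to propagate adjacency to all of $V(K)$ and all of $S$.

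The main obstacle I anticipate is the bookkeeping of where an $M$-saturated path is allowed to enter and leave $K$ and $G_0$: one must be careful that a shortest such path cannot oscillate between $K$ and $S$ more than necessary, and that collapsing a sub-ear does not destroy the $M$-saturated property at the splice points. The clean way around this is to take $P$ of minimum length over all choices of $u\in V(K)$, $v\in S$ that are non-adjacent (assuming for contradiction such a pair exists), apply Property~\ref{prop:separating2saturated} to see that every component of $P$ minus the separating set $V(G)\setminus V(\hat K)$ and every component of $P-E(G_0)$ is an $M$-ear, invoke Proposition~\ref{prop:ear-base} and Fact~\ref{fact:complete} to replace each such ear by an edge, and derive a strictly shorter $M$-saturated path between a non-adjacent pair in $V(K)\times S$, contradicting minimality; the base case, where $P$ has no interior vertices outside $V(K)\cup S$, then directly yields $uv\in E(G)$ or a shorter witness. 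Finally I would remark that combined with Fact~\ref{fact:complete} this shows that the attachment of the tower $K$ to $S$ is complete, matching the requirement of the cathedral construction.
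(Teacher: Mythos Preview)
Your plan is far more elaborate than what is needed, and in its present form does not close. The paper's proof is a single application of Lemma~\ref{lem:reach}~(iii): once you have assumed $uv\notin E(G)$ and produced (via Property~\ref{prop:complement}) an $M$-saturated path between $u$ and $v$, you simply observe that $V(K)\subseteq \vup{S}$, so $u\in\vup{S}$ while $v\in S$, and Lemma~\ref{lem:reach}~(iii) says no such $M$-saturated path exists. That is the entire argument. You mention ``Lemma~\ref{lem:reach} type reachability'' in passing, but then abandon it in favor of an ear-replacement and minimal-counterexample scheme; the lemma already gives the contradiction outright.

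The specific gaps in your scheme are these. First, your minimality is taken over non-adjacent pairs $(u,v)\in V(K)\times S$, but replacing an $M$-ear relative to $G_0$ by an edge (via Proposition~\ref{prop:ear-base} and Fact~\ref{fact:complete}) produces a shorter $M$-saturated path between two vertices of $V(G_0)$, not between a pair in $V(K)\times S$, so you have not contradicted minimality. Second, your ``base case'' (all interior vertices in $V(K)\cup S$) does not yield $uv\in E(G)$; knowing that \emph{some} $K$--$S$ edge lies on $P$ tells you nothing new, since such edges already exist by hypothesis. Third, the ``propagation'' step via Fact~\ref{fact:complete} cannot work: that fact only joins $\gsim{G}$-equivalent vertices within a single factor-connected component, and $u\in V(K)$ and $v\in S\subseteq V(G_0)$ lie in different factor-connected components. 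If you want an argument that avoids citing Lemma~\ref{lem:reach}, the clean route is the one used in the proof of Lemma~\ref{lem:each_saturated}: since $V(K)$ is separating, Property~\ref{prop:separating2saturated} makes the component of $P[V(G)\setminus V(K)]$ containing $v$ an $M$-saturated path whose other endpoint lies in $\Gamma_G(K)\subseteq S$; this is an $M$-saturated path between two vertices of $S$, contradicting Fact~\ref{fact:deletable2path}.
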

\begin{proof}
Suppose the claim fails, that is, 
there are $u\in V(K)$ and $v\in S$ such that $uv\not\in E(G)$.
Then, by Property~\ref{prop:complement}, 
there is an $M$-saturated path between $u$ and $v$, 
where $M$ is an arbitrary perfect matching of $G$. 
By the definitions, 
$V(K)\subseteq \vup{S}$; therefore, $u\in \vup{S}$. 
Hence, this contradicts \ref{item:nosaturate} of Lemma~\ref{lem:reach}, 
and we have the claim. 
%
%
\qed\end{proof}

Now we are ready to prove Theorem~\ref{thm:cathedral_nec}: 

\begin{proof}[Theorem~\ref{thm:cathedral_nec}]
The first sentence of Theorem~\ref{thm:cathedral_nec} is immediate from 
Lemma~\ref{lem:minimum} and Lemma~\ref{lem:gpartispart}. 
The former of 
the second sentence is also immediate by Lemma~\ref{lem:each_saturated}. 

For the remaining claim, 
first note that by Lemma~\ref{lem:each_saturated}, 
$G_0$ and any $G_S$ are saturated. 
Therefore, $G_0$ and $\mathcal{T} = \{G_S\}_{S\in\mathcal{P}_0}$ are well-defined 
as a foundation and a family of towers of the cathedral construction.

By the definition, 
for each $S\in\mathcal{P}_0$, 
it follows that $\Gamma_{G}(G_S) \subseteq S$. 
Additionally by Lemma~\ref{lem:join} 
every vertex of $V(G_S)$ and every vertex of $S$ are joined. 
Therefore, 
it follows that $G$ has a saturated subgraph $G'$
obtained from $G_0$ and $\mathcal{T}$ by the cathedral construction. 
Moreover, 
by Theorem~\ref{thm:base},  
for each connected component $K$ of $G-V(G_0)$ 
there exists $S\in\mathcal{P}_0$ such that $\Gamma(K)\subseteq S$; 
in other words, $K$ denotes the same subgraph of $G$ as $G_S$.    
Hence, 
$V(G) = V(G_0)\cup \bigcup_{S\in\mathcal{P}_0} V(G_S)$ holds 
and actually $G'$ is  $G$. 
Thus, $G$ is the graph obtained from $G_0$ and $\mathcal{T}$ by  the cathedral construction.  
%
\qed\end{proof} 

\subsection{Proof of Theorem~\ref{thm:cathedral_suff}}
Next we consider the graphs obtained by the cathedral construction 
and show Theorem~\ref{thm:cathedral_suff}, 
which states that the foundations of them are the minimum elements of the posets by $\yield$.  

Since the necessity of the first claim of Theorem~\ref{thm:cathedral}, the next proposition,  
is not so hard (see~\cite{lp1986}), 
we here present it without a proof. 
\begin{proposition}[Lov\'asz~\cite{lovasz1972b, lp1986}]\label{prop:cathedral_nec}
Let $G_0$ be a saturated elementary graph, 
and $\mathcal{T} = \{G_S\}_{S\in\gpart{G_0}}$ 
be a family of saturated graphs. 
Then, the graph $G$ obtained   
from the foundation $G_0$ and the family of towers $\mathcal{T}$
by the cathedral construction 
is saturated. 
\end{proposition}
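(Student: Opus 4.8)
Here is how I would prove Proposition~\ref{prop:cathedral_nec}, the statement that the cathedral construction always yields a saturated graph.

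First I would fix perfect matchings $M_0$ of $G_0$ and $M_S$ of each nonempty tower $G_S$. Since the cathedral construction adds no edge inside $V(G_0)$ and no edge inside any $V(G_S)$, the disjoint union $M_0\cup\bigcup_{S}M_S$ is a perfect matching of $G$, so $G$ is factorizable. The plan is then to show that for every complement edge $e=xy$ of $G$ the graph $G+e$ has a perfect matching containing $e$; since $e\notin E(G)$, such a matching is not a perfect matching of $G$, so $G+e$ has strictly more perfect matchings than $G$, which is saturatedness (this is just Property~\ref{prop:complement}, or the definition unpacked). I would then argue by cases according to where $x$ and $y$ sit in $V(G)=V(G_0)\,\dot{\cup}\,\bigcup_{S\in\gpart{G_0}}V(G_S)$. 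Note that by construction an edge between $V(G_0)$ and $V(G_S)$ is present in $G$ exactly when its $V(G_0)$-endpoint lies in $S$; hence the possible complement edges are: (I) $x,y\in V(G_0)$; (II) $x,y\in V(G_S)$ for a single $S$; (III) $x\in V(G_0)\setminus S$ and $y\in V(G_S)$; (IV) $x\in V(G_S)$, $y\in V(G_T)$ with $S\ne T$.

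Cases (I) and (II) are immediate. In (I), $xy$ is a complement edge of the saturated graph $G_0$, so $G_0+xy$ has a perfect matching $N_0\ni xy$, and $N_0\cup\bigcup_{S}M_S$ is a perfect matching of $G+xy$ through $xy$. In (II), $xy$ is a complement edge of the saturated tower $G_S$, so $G_S+xy$ has a perfect matching $N_S\ni xy$, and $N_S\cup M_0\cup\bigcup_{T\ne S}M_T$ works. For the mixed cases I would use three elementary observations: (a) if $s,t\in V(G_0)$ lie in distinct classes of $\gpart{G_0}$ then $s\not\gsim{G_0}t$ and $s\ne t$, so $G_0-s-t$ is factorizable --- this is simply the definition of $\gsim{G_0}$, using that $G_0$ is elementary hence factor-connected; (b) in a nonempty factorizable tower $G_S$, a vertex $v$ and its $M_S$-partner $v'$ can be deleted simultaneously, with $G_S-v-v'$ factorizable; (c) in the cathedral construction every vertex of $S$ is adjacent to every vertex of $V(G_S)$. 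For (III), $x$ lies in some class $S'\ne S$; choosing any $x_2\in S$, observation (a) gives a perfect matching $M_0'$ of $G_0-x-x_2$, and with $y'$ the $M_S$-partner of $y$ I would take $N:=\{xy\}\cup\{x_2y'\}\cup M_0'\cup(M_S\setminus\{yy'\})\cup\bigcup_{T\ne S}M_T$; observation (c) gives $x_2y'\in E(G)$, and the vertex sets involved partition $V(G)$, so $N$ is a perfect matching of $G+xy$ containing $xy$. Case (IV) is symmetric: free $x$'s partner $x'$ in $G_S$ and $y$'s partner $y'$ in $G_T$, match $x'$ to some $s\in S$ and $y'$ to some $t\in T$ (observation (c)), use observation (a) to get a perfect matching of $G_0-s-t$, and assemble a perfect matching of $G+xy$ through $xy$.

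I do not expect a real obstacle here --- this is the easy direction and the paper itself defers it to \cite{lp1986}. The only care needed is (i) getting the case division right: noticing that a join $S\cup V(G_S)$ contains no complement edge, and (as a consistency aside, via Fact~\ref{fact:complete}) that a complement edge of $G_0$ necessarily joins two distinct classes; and (ii) verifying the three facts (a)--(c), each of which is one line. One could instead first prove the structural statement that every perfect matching of $G$ decomposes as a disjoint union of a perfect matching of $G_0$ and of each $G_S$ (equivalently, $V(G_0)$ and the $V(G_S)$ are separating), which would follow from the barrier property of the canonical classes of the elementary graph $G_0$; but the direct construction above avoids barriers entirely, which is consistent with the spirit of the rest of the paper, so I would prefer it.
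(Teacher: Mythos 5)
Your proof is correct. The paper itself does not prove Proposition~\ref{prop:cathedral_nec}; it states ``Since the necessity of the first claim of Theorem~\ref{thm:cathedral}, the next proposition, is not so hard (see~\cite{lp1986}), we here present it without a proof,'' and simply cites Lov\'asz--Plummer. So there is no in-paper argument to compare against, but your construction is sound and self-contained: the four-way case split on the location of the endpoints of $e=xy$ is exhaustive (since the cathedral construction fills in all of $E_G[S,V(G_S)]$, no complement edge can straddle a join), cases (I)--(II) reduce directly to saturatedness of $G_0$ and $G_S$, and in (III)--(IV) your hand-built matchings visibly partition $V(G)$ and use only edges of $G+e$. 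Observation (a) is exactly the definition of $\gsim{G_0}$ specialized to the elementary graph $G_0$, (b) is trivial, and (c) is the construction itself. One small point worth making explicit: in (III) and (IV) the relevant towers are nonempty (otherwise the cases cannot arise), so the $M_S$-partners $y'$, $x'$ exist; and if $\gpart{G_0}$ has a single class, (III) and (IV) are vacuous, so the argument does not silently require two classes. Your choice to avoid the decomposition-of-all-perfect-matchings route (which invokes the barrier property of canonical classes) is indeed in keeping with the paper's stated aim of proving the cathedral theorem without the Gallai--Edmonds machinery or barriers, so the direct construction is the better fit here.
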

We give one more lemma: 
\begin{lemma}\label{lem:fc}
Let $G$ be a saturated graph, obtained from the foundation \if0 an elementary saturated graph \fi $G_0$ and 
the family of towers 
\if0 a family of saturated graphs \fi 
$\{G_S\}_{S\in \gpart{G_0}}$
by  the cathedral construction.   
Then, $G':= G/V(G_0)$ is factor-critical. 
\end{lemma}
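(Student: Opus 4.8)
\textbf{Proof proposal for Lemma~\ref{lem:fc}.}

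The plan is to show that after contracting $V(G_0)$ to a single vertex, the resulting graph $G' = G/V(G_0)$ becomes factor-critical, i.e., $G' - z$ is factorizable for every vertex $z$ of $G'$. There are two kinds of vertices to consider: the contracted vertex itself (call it $g_0$), and the vertices lying in some tower $G_S$.

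First I would handle $z = g_0$. Deleting $g_0$ from $G'$ leaves exactly the disjoint union $\bigcup_{S\in\gpart{G_0}} G_S$ of all the towers. Each tower $G_S$ is saturated, hence factorizable, so their disjoint union has a perfect matching; thus $G' - g_0$ is factorizable. (If every tower is empty, then $G'$ is a single vertex, which we regard as factor-critical by convention, so that degenerate case is fine too.)

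Next I would handle a vertex $z$ lying in a tower $G_S$ for some $S \in \gpart{G_0}$. I want a perfect matching of $G' - z$. Since $G_0$ is elementary, its canonical partition $\gpart{G_0}$ has a useful structure; in particular for the class $S$ containing the ``slot'' of $G_S$, there should be a near-perfect matching of $G_0$ exposing exactly one vertex of $S$ — this is the standard property of canonical classes of elementary graphs (each class $S$ satisfies that $G_0 - S$ has $|S|$ components, or equivalently $G_0$ has a near-perfect matching missing precisely a prescribed vertex of $S$). Pick such a near-perfect matching $M_0$ of $G_0$ exposing $w \in S$. Now $G_S - z$ is factorizable if $|V(G_S)|$ is even; wait — more carefully, since $G_S$ is saturated hence factorizable, $|V(G_S)|$ is even, so $G_S - z$ has odd order and we instead want a near-perfect matching of $G_S - z$. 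Here I would use that $G_S$ is elementary or, if not, factor-connected structure together with the cathedral-construction edges: actually the cleanest route is to take a perfect matching $M_S$ of $G_S$, let $z' $ be the partner of $z$ under $M_S$, set $M_S' := M_S \setminus \{zz'\}$ (a near-perfect matching of $G_S$ exposing $z'$), and then use the complete-bipartite join between $S$ and $V(G_S)$ supplied by the cathedral construction to add the edge $w z'$. Then $M_0 \cup M_S' \cup \{wz'\} \cup (\text{perfect matchings of all other towers})$ is a perfect matching of $G' - z$: it covers $V(G_0)$ (via $M_0$ plus $w$ being matched to $z'$), it covers $V(G_S) \setminus \{z\}$ (via $M_S'$ plus $z'$ matched to $w$), and it covers each other tower via its own perfect matching. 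Hence $G' - z$ is factorizable.

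The main obstacle I anticipate is justifying the existence of the near-perfect matching $M_0$ of $G_0$ exposing a \emph{prescribed} vertex $w$ of the class $S$ — i.e., that every vertex of a canonical class of an elementary graph can be the exposed vertex of some near-perfect matching. In this paper's framework this should follow from the definition of $\gsim{G_0}$ together with elementarity (via Proposition~\ref{prop:nonpositive} or the characterization of $\gsim{}$ by non-factorizability of $G_0 - u - v$), but one has to be a little careful to extract exactly the statement needed. Everything else is just stitching together matchings along the join edges guaranteed by Definition~\ref{def:cathedralconstruction}, and the degenerate cases (empty towers, $G_0$ a single edge or vertex) need a quick separate check.
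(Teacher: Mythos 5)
There is a genuine confusion in your argument for the case $z \in V(G_S)$: you build a matching $M_0 \cup M_S' \cup \{wz'\} \cup(\cdots)$ that uses a near-perfect matching $M_0$ of $G_0$, and you say this matching ``covers $V(G_0)$''. But the lemma is about $G' = G/V(G_0)$, where $V(G_0)$ has been contracted to a single vertex $g_0$: no edge of $G_0$ survives in $G'$, and $V(G_0)$ is not a subset of $V(G')$. So the object you construct is a matching of $G-z$, not of $G'-z$. In $G'$ the matching of $G_0$ is vacuous, and what remains after projection is simply $M_S' \cup \{g_0 z'\} \cup (\text{the other towers' perfect matchings})$, where $g_0 z' \in E(G')$ because $z'$ was joined to every vertex of $S$ by the cathedral construction. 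That projected matching already does the job; the auxiliary $M_0$, and with it the near-perfect-matching-of-$G_0$-exposing-a-prescribed-vertex step you flag as the ``main obstacle'', were never needed. The worry you raise is real if one tries to prove the statement in $G$ rather than $G'$, but it evaporates entirely once one actually works in the contracted graph.

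The paper takes an even more economical route, building a single near-perfect matching $M := \bigcup_S M^S$ of $G'$ that exposes only $g_0$, and then observing that for any $u \neq g_0$ the two-edge path $u, u', g_0$ (with $uu'\in M$ and $u'g_0 \in E(G')\setminus M$, the latter again by the join edges of the cathedral construction) is an $M$-balanced path from $u$ to $g_0$; Property~\ref{prop:path2root} then gives factor-criticality at once. You may want to redo your case analysis in $G'$ itself and either invoke Property~\ref{prop:path2root} as the paper does or present the cleaned-up direct matching construction described above.
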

\begin{proof}
Let $M^S$ be a perfect matching of $G_S$ for each $S\in \gpart{G_0}$, 
and let $M := \bigcup_{S\in\gpart{G_0}} M^S$. 
Then, $M$ forms a near-perfect matching of $G'\if0 G/V(G_0) \fi$, exposing only 
the contracted vertex $g_0$ corresponding to $V(G_0)$. 
Take $u\in V(G')\setminus\{g_0\}$ arbitrarily 
and let $u'$ be the vertex such that $uu'\in M$. 
Since $uu'\in M\cap E(G')$ and $u'g_0\in E(G')\setminus M$, 
there is an $M$-balanced path from 
$u$ to $g_0$ in $G'$, namely,  the one with edges $\{uu', u'g_0\}$. 
%
Thus, by Property~\ref{prop:path2root},  
$G'$ is factor-critical. 
\qed\end{proof}

Now we shall prove Theorem~\ref{thm:cathedral_suff}: 
\begin{proof}[Theorem~\ref{thm:cathedral_suff}]
By Proposition~\ref{prop:cathedral_nec}, 
$G$ is saturated. 
Since we have Lemma~\ref{lem:fc},  
in order to complete the proof,  
it suffices to prove  $G_0 \in \mathcal{G}(G)$. 
Let $p$ be the number of non-empty graphs in $\mathcal{T}$. 
We proceed by induction on $p$. 
If $p=0$, the claim obviously follows. 
Let $p > 0$ and suppose the claim is true for $p-1$. 
Take a non-empty graph $G_S$ from $\mathcal{T} \if0 \{G_S\}_{S\in\gpart{G_0}} \fi$, 
and let $G' := G - V(G_S)$. 
Then, $G'$ is the graph obtained by  the cathedral construction 
with $G_0$ and 
$\mathcal{T} \setminus \{G_S\} \cup \{H_S\} $,  
\if0 $\{G_S\}_{S\in\gpart{G_0}}\setminus \{G_S\} \cup \{H_S\}$ \fi
where $H_S$ is an empty graph. 
Therefore, 
Proposition~\ref{prop:cathedral_nec} yields that 
$G'$ is saturated, 
and the induction hypothesis yields that 
$G_0\in\mathcal{G}(G')$
and $G_0$ is the minimum element of the poset $(\mathcal{G}(G'), \yield )$. 
Thus, by Lemma~\ref{lem:gpartispart}, 
\begin{cclaim}\label{claim:reverse_refinement}
$\pargpart{G'}{G_0} = \gpart{G_0}$. 
\end{cclaim}
%
Let $M'$ be a perfect matching of $G'$ and 
$M^S$ be a perfect matching of $G_S$, 
and construct a perfect matching $M:= M' \cup M^S$ of $G$.  
\begin{cclaim}\label{claim:nonallowed}
No edge of $E_{G}[S, V(G_S)]$ is allowed in $G$. 
\end{cclaim}
\begin{proof}
Suppose the claim fails, that is, 
an edge $xy \in  E_{G}[S, V(G_S)]$ is allowed in $G$. 
Then, 
there is an $M$-saturated path $Q$ between $x$ and $y$ by Property~\ref{prop:allowed}, 
and 
$Q[V(G')]$ is an $M$-saturated path by Property~\ref{prop:separating2saturated}. 
Moreover, since $\Gamma_{G}(G_S)\cap V(G') \subseteq S$, 
it follows  that 
$Q[V(G')]$ is an $M$-saturated path of $G'$ 
between two vertices in $S$. 
With Fact~\ref{fact:deletable2path} 
this is a contradiction, because $S\in\pargpart{G'}{G_0}$ 
by Claim~\ref{claim:reverse_refinement}. 
Hence, we have the claim. 
\qed\end{proof}
By Claim~\ref{claim:nonallowed}, 
it follows that 
a set of edges is a perfect matching of $G$ 
if and only if it is a disjoint union of 
a perfect matching of $G'$ and $G_S$.
Thus, $G_0$ forms a factor-connected component of $G$, 
and we are done. 
\qed\end{proof}

\subsection{Proof of Theorem~\ref{thm:cathedral} and an Example} 

Now we can prove the cathedral theorem, 
combining Theorems~\ref{thm:cathedral_nec}, \ref{thm:cathedral_suff}, and 
\ref{thm:ge2cathedral}: 
\begin{proof}[Theorem~\ref{thm:cathedral}]
By Proposition~\ref{prop:cathedral_nec} 
and Theorem~\ref{thm:cathedral_nec}, 
the first claim of Theorem~\ref{thm:cathedral} is proved. 
The statement \ref{item:allow} is by Theorem~\ref{thm:cathedral_suff}, 
since it states that $G_0\in\mathcal{G}(G)$. 
The statement \ref{item:unique} is also by Theorem~\ref{thm:cathedral_suff}, 
since the poset $(\mathcal{G}(G), \yield)$ is a canonical notion. 
The statement \ref{item:c_sat} is by combining Theorem~\ref{thm:cathedral_suff} 
and Theorem~\ref{thm:ge2cathedral}. 
%
\qed\end{proof}

\begin{example}{\rm 
The graph $\tilde{G}$ in Figure~\ref{fig:saturated_verticesnumbered} 
consists of  four factor-connected components, say $C_1, \ldots, C_4$ 
in Figure~\ref{fig:saturated_component}, 
and Figure~\ref{fig:saturated_hasse} 
shows the Hasse diagram of $(\mathcal{G}{(\tilde{G})}, \yield)$, 
which has the minimum element $C_1$,  
as stated in Lemma~\ref{lem:minimum}. 
Figure~\ref{fig:saturated_partition} indicates 
the generalized canonical partition of $\tilde{G}$: 
\begin{quote}
$\gpart{\tilde{G}} = \{ \{p\}, \{q,r\}, \{s\}, 
\{t\}, \{u\}, \{v\},  \{w\},  \{x\}, \{y\} \}$. 
\end{quote}
Here 
we have  $\pargpart{\tilde{G}}{C_i} = \gpart{C_i}$ 
for each $i = 1,\ldots, 4$, as stated in Lemma~\ref{lem:gpartispart}.  
From these two figures 
we see examples for other statements on 
the saturated graphs  in this section. 
{\hfill $\blacksquare$}
}
\end{example} 

\section{Concluding Remarks}\label{sec:conclusion}
Finally, we give some remarks. 

\begin{remark}{\rm 
Theorems~\ref{thm:cathedral_nec} and \ref{thm:cathedral_suff}
can be regarded as a refinement, 
and Theorem~\ref{thm:ge2cathedral} as a generalization
 of Theorem~\ref{thm:cathedral}, 
from the point of view 
of the canonical structures of Section~\ref{sec:canonical}. 
{\hfill $\blacksquare$} 
}
\end{remark}
 
\begin{remark}{\rm 
The poset $(\mathcal{G}(G), \yield)$ and $\gpart{G}$ 
can be computed in $O(|V(G)|\cdot |E(G)|)$ time~\cite{kita2012a, kita2012b}, 
where $G$ is any factorizable graph. 
Therefore,  
given a saturated graph, 
we can also find how it is constructed by iterating the cathedral construction 
in the above time by computing the associated poset and 
the generalized canonical partition. 
{\hfill $\blacksquare$}
} 
\end{remark}
\begin{remark}{\rm 
The canonical structures of general factorizable graphs 
 in Section~\ref{sec:canonical} 
can be obtained without 
the Gallai-Edmonds structure theorem nor the notion of barriers. 
The other properties we cite in this paper to 
prove the cathedral theorem are also obtained without them.  
Therefore, our proof shows that 
the cathedral theorem holds without assuming either of them. 
{\hfill $\blacksquare$} 
}
\end{remark} 

With the whole proof, 
we can conclude that 
the structures in Section~\ref{sec:canonical} is 
what essentially underlie the cathedral theorem. 
We see how a factorizable graph leads to a saturated graph
having the same family of perfect matchings 
by sequentially adding complement edges. 
Our proof is quite a natural one  
because the cathedral theorem---a characterization of a class of graphs defined by a kind of edge-maximality ``saturated''---is derived as a consequence of considering edge-maximality over the underlying general structure. 
We hope yet more would be found on the field of  
counting the number of prefect matchings 
with the results in this paper and \cite{kita2012a, kita2012b, kita2012f, kita2013a}. 

\begin{notes}{\rm
The statements in \cite{kita2012b} or \cite{kita2013a} 
can be also found in \cite{kita2012a} or \cite{kita2012f}, respectively. 
The journal version of this paper will appear in 
Journal of the Operations Research Society of Japan. }
\end{notes}

\begin{acknowledgement}{\rm 
The author is grateful to  anonymous referees for giving various comments for logical forms of the paper as well as  many other suggestions about writing.  
The author also wishes to express her gratitude to 
Prof. Yoshiaki Oda for carefully reading the paper 
and giving lots of useful comments 
and to Prof. Hikoe Enomoto and Prof. Kenta Ozeki 
for many useful suggestions about writing. 
}
\end{acknowledgement}

\bibliographystyle{plain}

\setcounter{theorem}{0}
\renewcommand{\thetheorem}{A\arabic{theorem}}
\newpage
\section*{Appendix: Basic Properties on Matchings}
Here we present some basic properties about matchings.   
These are easy to observe and some of them might be regarded as folklores. 
\begin{property}\label{prop:path2root}
Let $M$ be a near-perfect matching of a graph $G$ that exposes $v\in \Vg$. 
Then, $G$ is factor-critical if and only if for any $u\in \Vg$ there exists 
an $M$-\zero path from $u$ to $v$.
\end{property}
\begin{proof}
Take $u\in V(G)$ arbitrarily. 
Since $G$ is factor-critical, there is a near-perfect matching $M'$ of $G$ 
exposing only $u$. 
Then, $G. M\Delta M'$ is an $M$-balanced path from $u$ to $v$, 
and the sufficiency part follows. 

Now suppose there is an $M$-balanced path $P$ from $u$ to $v$. 
Then, $M\Delta E(P)$ is a near-perfect matching of $G$ 
exposing $u$. 
Hence, the necessity part follows. 
\qed\end{proof}
\begin{property}\label{prop:allowed}
Let $G$ be a factorizable graph, $M$ be a perfect matching of $G$, 
and $e = xy \in E(G)$ be such that $e\not\in M$. 
The following three properties are equivalent: 
\renewcommand{\labelenumi}{\theenumi}
\renewcommand{\labelenumi}{{\rm \theenumi}}
\renewcommand{\theenumi}{(\roman{enumi})}
\begin{enumerate}
\item \label{item:allowed} The edge $e$ is allowed in $G$. 
\item \label{item:circuit} There is an $M$-alternating circuit $C$ such that $e\in E(C)$. 
\item \label{item:path} There is an $M$-saturated path between $x$ and $y$. 
\end{enumerate}
\end{property}
\begin{proof}
We first show that \ref{item:allowed} and \ref{item:circuit} 
are equivalent. 
Let $M'$ be a perfect matching of $G$ such that $e\in M'$. 
Then, $G. M\Delta M'$ has a connected component 
which is an $M$-alternating circuit containing $e$. 
Hence, \ref{item:allowed} yields \ref{item:circuit}. 

Now let $L:= M\Delta E(C)$. 
Then, $L$ is a perfect matching of $G$ such that $e\in L$. 
Hence, \ref{item:circuit} yields \ref{item:allowed};  
consequently, they are equivalent. 

Since \ref{item:circuit} and \ref{item:path} are obviously equivalent, 
now we are done. 
\qed\end{proof}
\begin{property}\label{prop:deletable}
Let $G$ be a factorizable graph and $M$ be a perfect matching of $G$, 
and  let $u, v\in V(G)$. 
Then, 
$G - u - v$ is factorizable if and only if 
there is an $M$-saturated path of $G$ between $u$ and $v$. 
\end{property}
\begin{proof}
For the sufficiency part, 
let $M'$ be a perfect matching of $G-u-v$. 
Then, $G. M\Delta M'$ has a connected component 
which is an $M$-saturated path between $u$ and $v$. 
For the necessity part, 
let $P$ be an $M$-saturated path between $u$ and $v$. 
Then, $M\Delta E(P)$ is a perfect matching of $G - u - v$, 
and we are done. 
\qed\end{proof}
The next one follows easily from the definition of the separating sets (Definition~\ref{def:separating}). 
\begin{property}\label{prop:separating2saturated}
Let $G$ be a factorizable graph and $M$ be a perfect matching of $G$. 
Let $X\subseteq V(G)$ be a separating set  
and $P$ be an $M$-saturated path. 
Then, 
\renewcommand{\labelenumi}{\theenumi}
\renewcommand{\labelenumi}{{\rm \theenumi}}
\renewcommand{\theenumi}{(\roman{enumi})}
\begin{enumerate}
\item 
each connected component of $P[X]$ is an $M$-saturated path, and 
\item 
any connected component of $P - E(G[X])$ that 
does not contain any end vertices of $P$  is 
an $M$-ear relative to $X$. 
\end{enumerate}
\end{property}
The next property is immediate by Property~\ref{prop:allowed}  
and is used frequently in Section~\ref{sec:proof}. 
\begin{property}\label{prop:complement}
Let $G$ be a factorizable graph, $M$ be a perfect matching, 
and $x, y\in V(G)$ be such that  $xy\not\in E(G)$. 
Then, the following properties are equivalent: 
\begin{enumerate}
\renewcommand{\labelenumi}{\theenumi}
\renewcommand{\labelenumi}{{\rm \theenumi}}
\renewcommand{\theenumi}{(\roman{enumi})} 
\item The complement edge $xy$ creates a new perfect matching in $G + xy$. 
\item The edge $xy$ is allowed in $G + xy$. 
\item There is an $M$-saturated path between $x$ and $y$ in $G$. 
\end{enumerate}
\end{property}

\end{document}